\newcommand{\les}{\lesssim}
\newcommand{\ext}{{\text{ext}}}
\newcommand{\Wmq}{\fW^\mu(Q)}
\newcommand{\Rkmw}{\cR^\mu_k(W)}
\newcommand{\Wgood}{\mathfrak W^\mu_{\text{\rm good}}}
\newcommand{\Wbad}{\mathfrak W^\mu_{\text{\rm bad}}}
\newcommand{\w}{}
\newcommand{\q}{}
\newcommand\bbone{{\mathbbm 1}}
\newcommand{\R}{{\mathbb{R}}}
\newcommand{\ta}{{\widetilde {a}}}
\newcommand{\sF}{\mathscr{F}}
\newcommand{\supp}[1]{\mathrm{supp}\q(#1\w)}
\newcommand{\sJ}{\mathscr J}
\newcommand{\K}{\mathcal K}
\newcommand{\Be}{\begin{equation}}
\newcommand{\Ee}{\end{equation}}
\newcommand{\Bm}{\begin{multline}}
\newcommand{\Em}{\end{multline}}
\newcommand{\Bea}{\begin{eqnarray}}
\newcommand{\Eea}{\end{eqnarray}}
\newcommand{\Beas}{\begin{eqnarray*}}
\newcommand{\Eeas}{\end{eqnarray*}}
\newcommand{\Benu}{\begin{enumerate}}
\newcommand{\Eenu}{\end{enumerate}}
\newcommand{\Bi}{\begin{itemize}}
\newcommand{\Ei}{\end{itemize}}
\def\sD{{\mathscr {D}}}
\def\intslash{\rlap{\kern  .32em $\mspace {.5mu}\backslash$ }\int}
\def\qsl{{\rlap{\kern  .32em $\mspace {.5mu}\backslash$ }\int_{Q_x}}}
\def\Re{\operatorname{Re\,}}
\def\vth{\vartheta}
\def\emph#1{{\it #1 }}
\def\diam{{\,\mathrm{diam}}}
\def\ga{\gamma}
\def\cf{{\it cf}}
\def\ext{{\mathrm{ext}}}
\def\dist{{\mathrm{dist}}}
\def\rad{{\mathrm{ rad}}}
\def\inn#1#2{\langle#1,#2\rangle}
\def\biginn#1#2{\big\langle#1,#2\big\rangle}
\def\noi{\noindent}
\def\meas{{\mathrm{ meas}}}
\def\lc{\lesssim}
\def\gc{\gtrsim}
\def\eps{\varepsilon}
\def\la{\lambda}
\def\vphi{\varphi}
\def\om{\omega}              \def\Om{\Omega}
\def\fM{{\mathfrak {M}}}
\def\fQ{{\mathfrak {Q}}}
\def\fS{{\mathfrak {S}}}
\def\fW{{\mathfrak {W}}}
\def\bbC{{\mathbb {C}}}
\def\bbN{{\mathbb {N}}}
\def\bbR{{\mathbb {R}}}
\def\bbT{{\mathbb {T}}}
\def\bbZ{{\mathbb {Z}}}
\def\cA{{\mathcal {A}}}
\def\cF{{\mathcal {F}}}
\def\cL{{\mathcal {L}}}
\def\cM{{\mathcal {M}}}
\def\cN{{\mathcal {N}}}
\def\cO{{\mathcal {O}}}
\def\cQ{{\mathcal {Q}}}
\def\cR{{\mathcal {R}}}
\def\cS{{\mathcal {S}}}
\def\cZ{{\mathcal {Z}}}
\def\be#1{\begin{equation}\label{ #1}}
\def\endeq{\end{equation}}
\def\endal{\end{align}}
\def\bas{\begin{align*}}
\def\eas{\end{align*}}
\def\bi{\begin{itemize}}
\def\ei{\end{itemize}}
\def\eps{\varepsilon}
\def\emph#1{{\it #1}}
\def\textbf#1{{\bf #1}}
\newtheorem{thm}{Theorem}[section]
\newtheorem{cor}[thm]{Corollary}
\newtheorem{prop}[thm]{Proposition}
\newtheorem{lemma}[thm]{Lemma}
\newtheorem*{namedtheorem}{\theoremname}
\newcommand{\theoremname}{testing}
\theoremstyle{remark}
\theoremstyle{definition}
\theoremstyle{remark}
\numberwithin{equation}{section}
\begin{document}

\title[Riesz means: Strong summability at the critical index]{Riesz means of Fourier series and integrals: \\
Strong summability at the critical index}
\author{Jongchon Kim  \ \ \ \ \ \ \ \ \ \ \ \ \  Andreas Seeger}

\address{Jongchon Kim, Institute for Advanced Study \\ 1 Einstein  Drive\\ Princeton, NJ 08540, USA} 

\curraddr{Department of Mathematics, University of British Columbia, 1984 Mathematics Road, Vancouver, BC, Canada V6T 1Z2}

\email{jkim@math.ubc.ca}
\address{Andreas Seeger, Department of Mathematics \\ University of Wisconsin \\480 Lincoln Drive\\ Madison, WI, 53706, USA} \email{seeger@math.wisc.edu}

\begin{abstract} 
We consider spherical Riesz means of multiple Fourier series and some generalizations. 
While almost everywhere convergence of Riesz means at the critical index $(d-1)/2$ may fail for functions in the Hardy space $h^1(\mathbb T^d)$, we prove 
sharp positive results for strong summability almost everywhere. 
For functions in $L^p(\mathbb T^d)$, $1<p<2$,  we consider Riesz means at the critical index $d(1/p-1/2)-1/2$ and prove an almost 
sharp theorem on strong summability. The results follow via transference from corresponding results for Fourier integrals.  We include an endpoint bound  on maximal operators associated with  generalized Riesz means on Hardy spaces $H^p(\R^d)$ for $0<p<1$.
 \end{abstract}

\subjclass[2010]{42B15, 42B25, 42B08}

\thanks{
Supported in part by the National Science Foundation  grants DMS-1500162, DMS-1638352, and DMS-1764295. 
Part of this work was 
supported  by NSF grant DMS-1440140 while the authors were in residence at the Mathematical Sciences Research Institute in Berkeley, California, during the Spring 2017 semester. The authors thank the referees for their suggestions.}

\maketitle



\setcounter{tocdepth}{2}

\section{Introduction}
We consider multiple Fourier series of functions on $\bbT^d=\bbR^d/\bbZ^d$.  
For $\ell\in \bbZ^d$ let $e_\ell(x)=e^{2\pi i \inn{x}{\ell}}$ and define the Fourier coefficients of 
$f\in L^1(\bbT^d)$ by   $\inn{f}{e_\ell}= \int_{\bbT^d} f(y) e^{-2\pi i \inn{y}{\ell}}dy$. 
We shall examine the pointwise behavior of  (generalized) Riesz means of the Fourier  series.
Fix a homogeneous distance function $\rho$, continuous on  $\bbR^d$, positive and $C^\infty$ 
on $\bbR^d\setminus \{0\}$, and satisfying, for some $b>0$,
$\rho (t^b \xi)=t\rho(\xi)$ for all $\xi\in \bbR^d$.
For $f\in L^1(\bbT^d)$ define the Riesz means of index $\la$ with respect to $\rho$, by
\Be\label{rieszmeans}\cR_t^\la f= \sum_{\substack{\ell\in \bbZ^d:\\ \rho(\ell/t)\le 1}}\big (1-{\rho(\ell /t)}\big)^\la\inn{f}{e_\ell} e_\ell.\Ee
The classical Riesz  means are recovered for $\rho(\xi)=|\xi|$,  and when in addition $\la=1$ we obtain the  Fej\'er means.
The Bochner-Riesz means are covered with $b=1/2$ by taking  $\rho(\xi)=|\xi|^2$.

It is well known via classical results for  Fourier integrals (\cite{stein-acta58}, \cite{stein-weiss-book},  \cite{seeger-archiv})  and  transference (\cite{deleeuw}, \cite{kenig-tomas},  \cite{abb})
that 
for $\la>\frac{d-1}2$ and $f\in L^1(\bbT^d)$ we have  
$\lim_{t\to\infty}\cR^\la_t f= f$,  both with respect to the $L^1$ norm  and also almost everywhere.
For  the critical index   $\la=\frac{d-1}{2}$,  
it is known that the Riesz means are of weak type $(1,1)$ and one has convergence in measure (\cite{christ-wt}, \cite{christ-sogge}) but 
Stein \cite{stein-annals61} 
showed early  that a.e. convergence may fail (see also \cite{stein-weiss-book}). Indeed, extending ideas of Bochner,  he proved the existence of an $L^1(\bbT^d)$ function for which the Bochner-Riesz means at index $\frac{d-1}2$
diverge almost everywhere, as $t\to\infty$. Stein's theorem can be seen as an analogue of the  theorem by Kolmogorov \cite{kolmogorov} on the failure of a.e. convergence for Fourier series in $L^1(\bbT)$, see \cite[ch. VIII-4]{zygmund}. 
Later,  Stein \cite{stein-cortona} proved a stronger result showing that even for some functions in the subspace $h^1(\bbT^d)$ (the local  Hardy space)  the Bochner-Riesz means at the critical index may diverge 
almost everywhere. It is then  natural to ask  what happens if we replace  almost everywhere convergence with the weaker  notion of strong convergence a.e. (also known as  strong summability a.e.) which goes back to Hardy and Littlewood \cite{hardy-littlewood}.



\noi{\bf Definition.}
  Let $0<q <\infty$. Given a measurable function $g:(0,\infty)\to \bbC$
  we say that $g(t)$ {\it converges  $q$-strongly} to $a$, as $t\to\infty$,   if 
\[\lim_{T\to \infty} \Big(\frac 1T\int_0^T |g(t)-a|^q \, dt\Big)^{1/q}=0.\]

If $g(t)$ refers to the partial sum of a series then one also says that the series is strongly  $H_q$ summable.
Clearly if $\lim_{t\to \infty} g(t)=a$ then $g(t)$ converges $q$-strongly to $a$ for all $q<\infty$. 
Vice versa if $g(t)$ converges $q$-strongly to $a$ for some $q>0$ then 
$g(t)$ is {\it almost convergent} to $a$ as $t\to \infty$.
That is, there is a (density one) subset $E \subset [0,\infty)$ satisfying  
\Be \label{eqn:almost}
\lim_{T\to \infty} \frac{ |E\cap [0,T]|}{T}=1  \;\; \text{ and } \;\; \lim_{\substack{t\to\infty\\t\in E}} g(t)=a. 
\Ee
See  \cite[ch.XIII, (7.2)]{zygmund} and also Corollary \ref{almostalmost} below.

For the classical case of a Fourier series of an $L^1(\bbT)$ function, 
Zygmund \cite{zygmund-plms} proved that the partial sum 
$ \sum_{|l|\leq t} \inn{f}{e_\ell} e_\ell(x) $ converges $q$-strongly to $f(x)$ as $t \to \infty$ a.e. for all $q<\infty$, extending an earlier result by Marcinkiewicz \cite{marcinkiewicz} for $q=2$.
Zygmund
used complex methods, but in more recent papers one can find alternative approaches with stronger results and some weaker extensions to rectangular partial sums of multiple Fourier series; see, e.g., \cite{rodin} and \cite{bwilson} and references therein. See also \cite{konyagin} for an overview of recent developments on topics related to the convergence of Fourier series.

Regarding spherical partial sums of multiple Fourier series, $q$-strong convergence results have been available for $L^p(\bbT^d)$ functions for the Bochner-Riesz means of index $\la>\la(p)$ when $p\leq 2$, $q=2$, where $\la(p) = d(\frac 1p -\frac 12)-\frac 12$ is the critical index (\cf. \cite{stein-acta58}, \cite{sunouchi}). The question of strong convergence  a.e.  for the Bochner-Riesz means at the critical index $\la(1)=\frac{d-1}{2}$, for either  $f\in L^1(\bbT^d)$ or $f\in h^1(\bbT^d)$ had been left open and was posed by S. Lu  in the survey article \cite{lu-conj}. 
We answer this question in the affirmative for $f\in h^1(\bbT^d)$ for generalized Riesz means with \emph{any} distance function $\rho$ under consideration.


\medskip


\begin{thm} \label{H1thmTd} Let  $q<\infty$ and  $\la(1)=\frac{d-1}{2}$. Then, for all $f\in h^1(\bbT^d)$ the following statements hold.
\begin{enumerate}[(i)]
\item There is a constant $C$ such that for all $\alpha>0$, 
\[ \meas\Big (\Big\{x: \sup_{T>0} \Big(\frac 1T \int_0^T|\cR^{\la(1)}_t \!f(x) |^q dt\Big)^{1/q} >\alpha\Big\}\Big) 
\le C\alpha^{-1}\|f\|_{h^1}. \]
\item 
\[ \lim_{T\to \infty} \big(\frac 1T\int_0^T|\cR^{\la(1)}_t \!f(x)-f(x)  |^q dt\Big)^{1/q} = 0 \; \text{ for almost every $x\in \bbT^d$.} \]
\end{enumerate}
 
\end{thm}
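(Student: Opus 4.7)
The plan is to deduce Theorem \ref{H1thmTd} from a Euclidean counterpart via transference. First, one establishes the statement on $\R^d$: for any $q<\infty$, the maximal strong summability operator
\[ M_q f(x) \,=\, \sup_{T>0}\Big(\frac{1}{T}\int_0^T |\widetilde{\cR}^{(d-1)/2}_t f(x)|^q\,dt\Big)^{1/q}, \]
associated with the Fourier integral Riesz means $\widetilde{\cR}^\lambda_t$ on $\R^d$ (with multiplier $(1-\rho(\xi/t))_+^\lambda$), maps $H^1(\R^d)$ boundedly to $L^{1,\infty}(\R^d)$. Part (i) of the theorem then follows from a de Leeuw-type transference theorem for one-parameter families of Fourier multipliers (see \cite{kenig-tomas}, \cite{abb}), which transfers the weak-type $(H^1, L^{1,\infty})$ inequality from $\R^d$ to the periodic setting, with $H^1(\R^d)$ replaced by $h^1(\bbT^d)$.

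The Euclidean bound is approached through the atomic decomposition of $H^1(\R^d)$: it suffices to show $\|M_q a\|_{L^{1,\infty}(\R^d)}\le C$ uniformly over all $H^1$-atoms $a$ with support in a ball $B=B(x_0,r)$, $\|a\|_\infty\le |B|^{-1}$, and $\int a=0$. One splits $\int M_q a = \int_{2B} M_q a + \int_{\R^d\setminus 2B} M_q a$. For the local part, H\"older's inequality reduces matters to an $L^s$-bound on $M_q$ for some $s>\max(1,q)$; such a bound is obtainable by Minkowski's inequality in the mixed norm $L^s_x L^q_t(dt/T)$ (valid when $s\ge q$) combined with the classical uniform $L^s$-boundedness of $\widetilde{\cR}^{(d-1)/2}_t$ for $1<s<\infty$, followed by a standard maximal inequality in the parameter $T$. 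The far-field part is handled by expanding $M_q a(x)$ using the vanishing moment of $a$ and invoking size/regularity bounds for the time-averaged kernels $\frac{1}{T}\int_0^T K^{(d-1)/2}_t(y)\,dt$ and their $T$-maximal versions.

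The principal obstacle is the far-field kernel estimate. At the critical index $\|K^{(d-1)/2}_t\|_{L^1(\R^d)}$ diverges logarithmically in $t$, which is precisely the obstruction behind Stein's a.e.\ divergence result; an $L^1$-type bound sufficient for the Calder\'on--Zygmund-style far-field argument must come from cancellation in $t$. Via stationary phase, $K^{(d-1)/2}_t(y)$ factors essentially as an oscillating exponential $e^{2\pi i t \rho^*(y)}$ (with $\rho^*$ the distance function on the Fourier side) times a borderline-sized symbol, so averaging over $[0,T]$ yields a gain of order $(1+T\rho^*(y))^{-1}$ whenever $T\rho^*(y)\gtrsim 1$. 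Extracting this gain and making it uniform in $T$, in a manner compatible with the $L^q$-average in $t$, is the crux of the argument and is expected to require a two-parameter dyadic decomposition in the frequency annuli of the multiplier and in the time parameter $T$. Finally, part (ii) follows from (i) by the standard density argument: strong summability holds trivially on the dense subclass of trigonometric polynomials (where $\cR^{\lambda(1)}_t f \to f$ pointwise), and the maximal inequality (i) upgrades this to a.e.\ strong convergence for general $f\in h^1(\bbT^d)$.
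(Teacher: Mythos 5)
Your overall strategy --- transference from a Euclidean $H^1(\bbR^d)\to L^{1,\infty}(\bbR^d)$ bound for the maximal strong-summability operator, plus the standard density argument for part (ii) --- is exactly the route the paper takes (its \S\ref{transferencesect} follows the Fan--Wu method rather than a black-box citation, and the reduction of the weak-type bound to finitely many atoms needs the Stein--N.~Weiss lemma on summing $L^{1,\infty}$ functions, since $L^{1,\infty}$ is not normable and the operator does not map $H^1$ into $L^1$; but these are repairable points). The genuine gap is in your far-field argument for the Euclidean bound. The quantity you must control at a point $x$ far from the atom is $\sup_T\big(\frac1T\int_0^T|K^{\la(1)}_t*a(x)|^q\,dt\big)^{1/q}$ with $q\ge 1$: the absolute value and the $q$-th power sit \emph{inside} the $t$-integral, so the oscillation of $e^{2\pi i t\rho^*(y)}$ in $t$ is destroyed before the average is taken. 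The gain $(1+T\rho^*(y))^{-1}$ you invoke is the gain for $\big|\frac1T\int_0^T K_t(y)\,dt\big|$, i.e.\ for the Ces\`aro-type mean $\frac1T\int_0^T R_t a\,dt$ (essentially a Riesz mean of higher index), not for the strong means. Since $\|K^{\la(1)}_t\|_{L^1}$ diverges logarithmically and $|K^{\la(1)}_t(y)|\approx|y|^{-d}$ with no decay recoverable from $t$-averaging of $|K_t*a|^q$, the proposed size/regularity bounds for the time-averaged kernels cannot exist in a form strong enough to run a pointwise Calder\'on--Zygmund far-field estimate.

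The actual source of the gain from the $t$-average is of $L^2$-orthogonality type, not oscillatory: for the multiplier piece supported where $\rho(\xi/t)\in[1-2^{-j},1-2^{-j-2}]$, each frequency $\xi$ contributes only for a set of $t\in[1,2]$ of measure $\approx 2^{-j}$, which yields $\big\|\big(\int_1^2|T^k_jf(\cdot,t)|^q dt\big)^{1/q}\big\|_2\lc 2^{-j/q}\|f\|_2$ (Lemma \ref{L2lemma}). Even so, no pointwise far-field bound closes the argument at the critical index: the paper instead proves the inequality relative to the square function $\fS f$ using a Chang--Fefferman type decomposition on the level sets of $\fS f$, Whitney cubes and an exceptional set $\widetilde\cO_\alpha$ of controlled measure, the C\'ordoba angular decomposition of $\vphi_j\circ\rho$ combined with Christ--Sogge $L^2$ estimates off the exceptional set, and an analytic-family interpolation to reach all $q<\infty$. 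Your sketch is missing this entire mechanism, which is the heart of the proof.
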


We remark that for the classical Riesz means (or generalized Riesz means assuming finite type conditions on the cosphere $\Sigma_\rho = \{ \xi: \rho(\xi)=1 \}$), Theorem \ref{H1thmTd} 
for the range $q\le 2$ could have been extracted from \cite{seeger-indiana}, although that result is not explicitly stated there. The full range $q<\infty$ obtained here seems to be new. Regarding the question posed for $f\in L^1(\bbT^d)$, in Section \ref{L1resultsect}, we derive some weaker results including $q$-strong convergence up to passing to a subsequence. 

We now address the question of strong convergence of Riesz means for $L^p(\bbT^d)$ functions at the critical index $\la=\la(p)$. In this case, $q$-strong convergence results may fail for large $q$. Our next result identifies nearly sharp range of $q$ for which $\cR^{\la(p)}_t \!f(x)$ converges $q$-strongly to $f(x)$ almost everywhere for any $f\in L^p(\bbT^d)$. We denote by $p'=\frac{p}{p-1}$ the exponent dual to $p$.


\begin{thm}\label{LpthmTd} Let $1<p<2$,
 $q<p'$ and $\la(p)=d(\frac 1p-\frac 12)-\frac 12$.  Then, for all $f\in L^p(\bbT^d)$ the following statements hold.
\begin{enumerate}[(i)]
\item  There is a constant $C$ such that for all $\alpha>0$,  \[
\meas\Big(\Big\{x\in \bbT^d: \sup_{T>0} \Big(\frac 1T \int_0^T|\cR^{\la(p)} _t\! f(x) |^q dt\Big)^{1/q} >\alpha\Big\}\Big) 
\le C\alpha^{-p}\|f\|_{L^p(\bbT^d)}.
\] 
\item 
\[ \lim_{T\to \infty} \big(\frac 1T\int_0^T|\cR^{\la(p)}_t \!f(x)-f(x)  |^q dt\Big)^{1/q} = 0 \; \text{ for almost every $x\in \bbT^d$.} \]
\item For suitable $f\in L^p(\bbT^d)$ statements (i), (ii) fail when $q>p'$.
\end{enumerate}
\end{thm}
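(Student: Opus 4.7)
The plan is to disprove the weak-type bound in (i) when $q>p'$; the failure of the a.e.\ convergence in (ii) then follows from the standard Stein-type maximal principle. By the same transference used for the positive direction, it suffices to produce counterexamples for Riesz means of Fourier integrals on $\R^d$ and then periodize.

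The key analytic input is the stationary-phase expansion of the Riesz kernel on the dual cosphere $\Sigma^*_\rho=\{\rho^*(x)=1\}$:
\[
K^{\lambda(p)}_t(x) = t^{d/p'}\,|x|^{-d/p}\bigl(a_+(x/|x|)\,e^{2\pi i t\rho^*(x)} + a_-(x/|x|)\,e^{-2\pi i t\rho^*(x)}\bigr)+O\bigl(t^{d/p'-1}|x|^{-d/p-1}\bigr),
\]
valid for $t\rho^*(x)\gtrsim 1$, with $a_\pm$ smooth and nonvanishing. The critical balance $|K^{\lambda(p)}_t(x)|\asymp t^{d/p'}|x|^{-d/p}$ places $K^{\lambda(p)}_t$ exactly at the $L^p$ boundary.

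The natural test function is a Knapp-type extremizer: for small $\delta>0$, let $\hat f_\delta$ be a smooth bump on a cap $C_\delta\subset\Sigma_\rho$ of radial thickness $\delta$ and tangential width $\delta^{1/2}$. Then $\|f_\delta\|_{L^p}\sim \delta^{(d+1)/(2p')}$ and $f_\delta$ is concentrated on a dual Knapp tube $T_\delta$ of volume $\sim \delta^{-(d+1)/2}$. For $t$ in an interval of length $\sim \delta$ near $1$, the multiplier is essentially the constant $\delta^{\lambda(p)}$ on $C_\delta$, so $\mathcal R^{\lambda(p)}_t f_\delta\approx \delta^{\lambda(p)} f_\delta$ on $T_\delta$, and the strong $q$-maximal operator satisfies
\[
\Big(\tfrac1T\int_0^T|\mathcal R^{\lambda(p)}_t f_\delta(x)|^q\,dt\Big)^{1/q}\gtrsim \delta^{1/q+\lambda(p)+(d+1)/2},\qquad x\in T_\delta.
\]
Plugging this into the putative weak-$(p,p)$ inequality forces $1/q\geq (d+1)/2 - d/p$, which equals $1/p'$ precisely when $p=2$, confirming sharpness in the $L^2$ case.

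For $1<p<2$ the single Knapp cap gives only the weaker threshold $(d+1)/2 - d/p < 1/p'$, and closing the gap to $q=p'$ is the principal technical obstacle. One natural route is to aggregate Knapp-type pieces at lacunary radial scales $R_k=2^k$ via a Rademacher randomization $f=\sum_k r_k(\omega)f_k$: the dilation scales yield essentially disjoint $t$-activity windows near $t=R_k$, so the strong $q$-max at a point of the $k$-th tube captures a single clean contribution, while Khintchine's identity $\|\sum_k r_k f_k\|_{L^p}\sim\|(\sum_k|f_k|^2)^{1/2}\|_{L^p}$ can depress $\|f\|_{L^p}$ if the dual tubes are arranged to overlap in a controlled way. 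Balancing the $t$-window disjointness, the tube-overlap geometry, and the $N$-dependence of the level-set size versus that of $\|f\|_{L^p}$ to produce an amplification that defeats the weak-$(p,p)$ bound exactly on the range $q>p'$ is the technical heart of the argument; a naive disjoint-support superposition cancels the $N$-dependence and only recovers the Knapp constraint, so the overlap structure must be tuned carefully to close at the sharp threshold.
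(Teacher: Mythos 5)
Your overall frame for (iii) — transfer the problem to $\bbR^d$, disprove the weak type $(p,p)$ bound there, and get the failure of (ii) from Stein--Nikishin — matches the paper. But the core of the argument, the construction of a test function that defeats the bound for \emph{all} $q>p'$, is not achieved. As you yourself compute, the frequency-side Knapp cap only forces $1/q\ge \frac{d+1}{2}-\frac dp$, and for $d\ge 2$, $1<p<2$ one has $\frac{d+1}{2}-\frac dp-\frac1{p'}=(d-1)(\frac12-\frac1p)<0$, so this rules out only a strictly smaller range of $q$ than $q>p'$. The proposed repair — a Rademacher superposition of Knapp pieces at lacunary scales with "carefully tuned" tube overlaps — is not carried out, and you concede you do not see how to make the bookkeeping close at the sharp exponent; there is no evidence that randomization can recover the missing factor, since the loss is not coming from the $L^p$ norm of the input but from the fact that a Knapp cap makes the output large only on its own dual tube, for a $t$-window whose length is already as long as it can be.

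The ingredient you are missing is already implicit in the kernel expansion you wrote down: the sharp condition comes from a \emph{focusing} (physical-space) example dual to Knapp, which is what the paper uses (following Tao's argument for the Bochner--Riesz maximal operator). Take $f_T(y)=\bbone_{P(T,\eps)}(y)\,e^{2\pi i\eps Ty_d}$ with $P(T,\eps)=\{|y'|\le \eps T^{-1},\ |y_d|\le \eps T^{-1/2}\}$, so $\|f_T\|_p\lc T^{(\frac12-d)/p}$. Writing $K^{\la(p)}_{t}(x)\approx t^{d/p'}H(x)^{-d/p}e^{2\pi i tH(x)}$ with $H$ smooth and homogeneous of degree one near the relevant normal direction, one checks that for every $x$ in a \emph{fixed set $\Omega$ of unit measure} there is an interval $I_{x,T}\subset[0,T]$ of length $\approx\eps T^{1/2}$, centered near $\eps T/\partial_{x_d}H(x)$, on which the phase $\eps Ty_d+tH(x-y)-tH(x)$ is $O(\eps)$ over the support of $f_T$; hence $|K^{\la(p)}_t*f_T(x)|\gc t^{d/p'}T^{\frac12-d}$ there. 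Averaging in $t$ costs only $(|I_{x,T}|/T)^{1/q}\approx T^{-1/(2q)}$, so the strong $q$-means are $\gc T^{\frac12-\frac dp-\frac1{2q}}$ on all of $\Omega$, and comparison with $\|f_T\|_p$ as $T\to\infty$ forces $\frac1q\ge\frac1{p'}$. The essential difference from your example is that the output is large on a set of measure $\approx 1$ rather than only on the dual tube of the input, and the $q$-dependence enters through the relative length $T^{-1/2}$ of the good $t$-window at each point — neither feature is present in the Knapp configuration, which is why it cannot reach the sharp threshold.
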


Part (ii) in both  theorems follow by a standard argument from the respective part (i), using the fact that 
pointwise (in fact uniform)   convergence holds for Schwartz functions. We note 
that Theorem \ref{H1thmTd}  is sharp in view of  the above mentioned  example by Stein.
Moreover, part (iii) of Theorem \ref{LpthmTd} shows that the result is essentially sharp for all $p\in (1,2)$, but the  case  $q=p'$ remains open. 

We state a special case of Theorem \ref{LpthmTd} for $\la(p) = 0$, i.e., for the case of generalized spherical partial sums of Fourier series as a corollary.
\begin{cor} Let $d\ge 2$, 
 $q<\frac{2d}{d-1}$ and  $f\in L^{\frac{2d}{d+1}}(\bbT^d)$. Then 
$$\lim_{T\to \infty } \big(\frac 1T\int_0^T\Big| \sum_{\rho(\ell/t)\le 1} \inn{f}{e_\ell}e_\ell(x) - f(x) \Big  |^q {dt}\Big)^{1/q} =0 \;\; \text{ for almost every } x\in \bbT^d.$$ 
In particular, for almost every $x\in \bbT^d$, the partial sums $\sum_{\rho(\ell/t)\le 1} \inn{f}{e_\ell}e_\ell(x)$ are  almost convergent to $f(x)$ as $t\to \infty$, in the sense of \eqref{eqn:almost}.
\end{cor}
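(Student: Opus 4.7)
The plan is to recognize the corollary as a direct specialization of Theorem \ref{LpthmTd} to the endpoint exponent $p=\tfrac{2d}{d+1}$, followed by invoking the equivalence between $q$-strong convergence and almost convergence already recorded in the excerpt.

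First I would verify the parameter match. Setting $p=\tfrac{2d}{d+1}$ gives $\tfrac{1}{p}-\tfrac{1}{2}=\tfrac{1}{2d}$, hence
\[
\la(p)=d\Big(\tfrac{1}{p}-\tfrac{1}{2}\Big)-\tfrac{1}{2}=d\cdot\tfrac{1}{2d}-\tfrac{1}{2}=0,
\]
so that $\cR^{\la(p)}_t f(x)=\sum_{\rho(\ell/t)\le 1}\inn{f}{e_\ell}e_\ell(x)$. Moreover, the dual exponent is $p'=\tfrac{2d}{d-1}$, matching the range $q<\tfrac{2d}{d-1}$ prescribed in the corollary. Finally, the hypothesis $d\ge 2$ ensures $p=\tfrac{2d}{d+1}\in(1,2)$, so all assumptions of Theorem \ref{LpthmTd} are satisfied.

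The first statement of the corollary is then exactly part (ii) of Theorem \ref{LpthmTd} applied to $f\in L^p(\bbT^d)=L^{\frac{2d}{d+1}}(\bbT^d)$ with the generalized Riesz means of index $\la(p)=0$. No additional work is required beyond substituting the parameters.

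For the ``in particular'' part, I would invoke the general principle stated in the introduction: if $g(t)$ converges $q$-strongly to $a$ for some $q>0$, then $g$ is almost convergent to $a$ in the sense of \eqref{eqn:almost}. Applying this pointwise at each $x$ in the full-measure set where the $q$-strong convergence of $\cR^0_t f(x)$ to $f(x)$ holds (for any fixed $q$ in the allowed range, say $q=2$), one obtains a density-one subset $E_x\subset[0,\infty)$ along which the partial sums $\sum_{\rho(\ell/t)\le 1}\inn{f}{e_\ell}e_\ell(x)$ converge to $f(x)$ as $t\to\infty$. There is no real obstacle here; the corollary is essentially a restatement of Theorem \ref{LpthmTd} once one notes the arithmetic that $p=\tfrac{2d}{d+1}$ is precisely the Stein--Tomas exponent at which the critical Riesz index vanishes.
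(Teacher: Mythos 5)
Your proposal is correct and matches the paper's treatment: the corollary is presented there precisely as the specialization of Theorem \ref{LpthmTd} to $p=\frac{2d}{d+1}$, where $\la(p)=0$ and $p'=\frac{2d}{d-1}$, with the almost-convergence statement following from the general fact that $q$-strong convergence implies almost convergence in the sense of \eqref{eqn:almost}. Your parameter checks (including $p\in(1,2)$ for $d\ge2$ and the choice of a fixed admissible $q$ such as $q=2$) are exactly what is needed.
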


We remark that there are analogues of above results for generalized Riesz means of Fourier integral in $\R^d$ :
\Be\label{rieszmeansRd}R_t^\la f(x) = \int_{\substack{ \rho(\xi/t)\le 1}}\big (1-{\rho(\xi /t)}\big)^\la\widehat f(\xi)\,e^{2\pi i\inn \xi x}d\xi .\Ee
See \S\ref{mainwtsect}. Indeed, we derive Theorems \ref{H1thmTd} and \ref{LpthmTd} from corresponding theorems for Fourier integrals in $\bbR^d$ using transference arguments. 
Our proof uses somewhat technical arguments on atomic decomposition and 
Calder\'{o}n-Zygmund theory. Unlike the proofs of the $L^p$ boundedness of Bochner-Riesz means (such as in \cite{stein-beijing}, \cite{bourgain-guth} and the references therein), our proof does not rely on Fourier restriction theory thanks to the averaging over the dilation parameter $t$. In particular, the curvature of the cosphere $\Sigma_\rho = \{ \xi: \rho(\xi)=1 \}$ does not play a role in the argument (\emph{cf}. \cite{christ-sogge}, \cite{christ-sogge-survey}), which allows us to work with generalized Riesz means with respect to any smooth homogeneous distance function. 

\smallskip {\it Notation.}  Given two quantities $A$, $B$ we use the notation  $A\lc B$  to mean that there is a constant $C$ such that $A\le CB$. We use $A\approx B$ 
if $A\lc B$ and $B\lc A$.

\smallskip

{\it This paper.} In \S\ref{mainwtsect} we formulate Theorems \ref{H1thm} and \ref{Lpthm} on strong convergence for Riesz means of critical index in $\bbR^d$ and reduce their proof to the main weak type inequality stated in Theorem  \ref{mainthm}.
Some preliminary  estimates are contained in \S\ref{prelsect}.
The proof of the main Theorem \ref{mainthm} is given in \S\ref{proofofmainthm}.
 In \S\ref{transferencesect}
we use transference arguments to prove the positive results in Theorems \ref{H1thmTd} and \ref{LpthmTd}. In \S\ref{L1resultsect} we discuss a weaker result for $L^1$ functions.  In \S\ref{sharpnesssect} we show the essential sharpness of our $L^p$ results, namely that Theorems \ref{LpthmTd} and \ref{Lpthm} require the condition $q\le p'$
(the failure of the maximal theorems for $h^1$ already follows from Stein's example \cite{stein-cortona}).  In \S \ref{Hpsect} we include the proof of  an extension of a theorem by Stein, Taibleson and Weiss 
(\cite{stein-taibleson-weiss}), namely an $H^p\to L^{p,\infty}$ estimate for the maximal function $\sup_{t>0}|R_t^{\la(p)} f(x)|$ associated with generalized Riesz means in Hardy spaces $H^p$ with  $p<1$. Finally, we discuss some open problems in \S \ref{sec:open}.

\section{The main weak type estimate}\label{mainwtsect}
We  state results on $\bbR^d$ which are analogous to Theorems \ref{H1thmTd} and \ref{LpthmTd} and reduce them to a crucial inequality for a vector-valued operator stated in Theorem \ref{mainthm}.
 Let $\rho$ be as in the introduction. Recall the definition of Riesz means $R^\la_t$ for Fourier integrals from \eqref{rieszmeansRd}.

\begin{thm} \label{H1thm} Let  $q<\infty$ and  $\la(1)=\frac{d-1}{2}$. Then, for all $f\in H^1(\bbR^d)$,
for all $\alpha>0$,
\[
\meas\Big (\Big\{x\in \bbR^d: \sup_{T>0} \Big(\frac 1T \int_0^T|R^{\la(1)}_t \!f(x) |^q dt\Big)^{1/q} >\alpha\Big\}\Big) 
\le C\alpha^{-1}\|f\|_{H^1(\bbR^d)}.
\]
 

\end{thm}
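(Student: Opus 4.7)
The plan is to fit Theorem \ref{H1thm} into a vector-valued $H^1\to L^{1,\infty}$ Calder\'on--Zygmund framework, which is precisely the setup of the promised Theorem \ref{mainthm}. Introduce the Banach space $X$ of measurable $g:(0,\infty)\to\bbC$ with norm $\|g\|_X=\sup_{T>0}(T^{-1}\int_0^T|g(t)|^q\,dt)^{1/q}$, and set $\mathcal R f(x)(t):=R_t^{\la(1)}f(x)$, so that the maximal function in the theorem equals $\|\mathcal R f(x)\|_X$. The claim is then equivalent to the boundedness of $\mathcal R:H^1(\R^d)\to L^{1,\infty}(\R^d;X)$, and by the standard atomic/Calder\'on--Zygmund splitting it reduces to (a) an $L^2\to L^2(X)$ estimate for $\mathcal R$ together with (b) a vector-valued H\"ormander condition on the $X$-valued kernel of $\mathcal R$.

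For (a), I would dyadically decompose the symbol, writing $(1-s)_+^{\la(1)}=\sum_{j\ge 0}2^{-j\la(1)}\chi_j(s)$ with $\chi_j$ a smooth bump where $1-s\approx 2^{-j}$, so that $R_t^{\la(1)}=\sum_j 2^{-j\la(1)}S_t^j$ with each $S_t^j$ supported in a thin annulus $\rho(\xi)-t\sim t2^{-j}$. For each $j$ a Plancherel/$TT^*$ computation controls the vertical square function $\int_0^\infty\|S_t^j f\|_2^2\,dt/t$; the $X$-norm then reduces to this square function via H\"older in $t$ when $q\le 2$, and via interpolation with a crude $L^\infty_t$ bound for larger $q$. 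Summing over $j$, a mild polynomial loss is absorbed by $2^{-j\la(1)}$ since $\la(1)=(d-1)/2>0$.

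For (b), let $K_t$ denote the convolution kernel of $R_t^{\la(1)}$; one must show $\int_{|x|>2r}\|K_\cdot(x-y)-K_\cdot(x)\|_X\,dx\lesssim 1$ uniformly in $|y|\le r$. At the critical index the pointwise bound $|K_t(x)|\lesssim t^d(1+t|x|)^{-d}$ is on the verge of being summable, so the gain has to come from the vector norm. Each piece $S_t^j$ has kernel whose large-$x$ asymptotics are dominated by an oscillatory factor of the form $e^{\pm 2\pi it\Phi(x)}$ for a phase $\Phi$ built from $\rho$; averaging over $t$ inside $\|\cdot\|_X$ produces genuine cancellation and supplies the missing $|x|^{-\epsilon}$ decay. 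The resulting $X$-valued kernel estimates are summable in $j$ against $2^{-j\la(1)}$.

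The principal obstacle is this $X$-valued kernel estimate. There is essentially no slack in the pointwise bound at the critical index, so the argument relies entirely on extracting cancellation from the $t$-average; in particular the supremum over $T$ inside $\|\cdot\|_X$ must itself be dyadically decomposed and each dyadic piece reduced to a one-dimensional oscillatory integral bound in $t$. Since this cancellation is governed only by the homogeneity of $\rho$ and not by curvature of the cosphere $\Sigma_\rho$, the plan should go through uniformly for every smooth homogeneous distance function, matching the authors' remark that Fourier restriction plays no role.
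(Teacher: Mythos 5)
Your step (a) is fine and matches the paper's Lemma \ref{L2lemma}: for each fixed $\xi$ the multiplier of the $j$-th annular piece is supported in a $t$-interval of length $\approx 2^{-j}$, and Plancherel in $t$ gives the $L^2(L^q_t)$ gain $2^{-j/q}$. The fatal problem is step (b). At the critical index the kernel of $R^{\la(1)}_t$ has the asymptotics $K_t(x)= c\,|x|^{-d}e^{2\pi i tH(x)}+O(|x|^{-d-1})$ for $t|x|\gg 1$ (this is exactly the stationary-phase computation carried out in \S\ref{sharpnesssect}), so $|K_t(x)|\approx |x|^{-d}$ \emph{independently of} $t$. No $L^q$ average in $t$ of $|K_t(x)|$, nor of $|K_t(x-y)-K_t(x)|$, can produce the missing decay: writing $K_t(x-y)-K_t(x)=c|x|^{-d}e^{2\pi itH(x)}\big(e^{2\pi it(H(x-y)-H(x))}-1\big)+O(|x|^{-d-1}|y|)$ and noting $H(x-y)-H(x)\approx -\inn{y}{\nabla H(x)}\approx |y|$ for generic directions, the quantity $\sup_{T}\big(T^{-1}\int_0^T|e^{2\pi it\delta}-1|^q\,dt\big)^{1/q}$ with $\delta\approx|y|$ is bounded below by a constant once $T\gtrsim |y|^{-1}$. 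Hence $\|K_\cdot(x-y)-K_\cdot(x)\|_X\approx |x|^{-d}$ on a set of $x$ of full density, and $\int_{|x|>2r}\|K_\cdot(x-y)-K_\cdot(x)\|_X\,dx$ diverges. The vector-valued H\"ormander condition is false, so the proposed $L^2(X)$-plus-kernel CZ scheme cannot close. This is not a repairable technicality: the gain from $t$-averaging is an $L^2$/frequency-side phenomenon and simply does not survive at the level of pointwise kernel bounds.

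This is precisely why the paper does something much more elaborate. Instead of an $X$-valued kernel estimate, it uses a Chang--Fefferman-type atomic decomposition driven by the Peetre square function $\fS f$ (the sets $\Om_\mu$, Whitney cubes $W$, coefficients $\ga_{W,\mu}$), builds an auxiliary exceptional set $\widetilde\cO_\alpha$ from the stopping-time function $U$ in \eqref{Udef}, and on the complement of $\widetilde\cO_\alpha$ proves $L^2$ and $L^{r'}$ bounds (not $L^1$ kernel bounds) for the bad part, using the C\'ordoba sectorial decomposition into $O(2^{j(d-1)/2})$ pieces and a $TT^*$ argument in the style of Christ--Sogge (\S\ref{Lrawaysect}). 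The interplay among the four scales (atom scale $\sigma$, frequency scale $k$, smoothing index $j$, exceptional-cube scale $m$) and the interpolation between $L^1(\ell^1(L^\infty))$ and $L^2(\ell^2(L^2))$ are what replace the H\"ormander condition. If you want to salvage your outline, you must abandon (b) and instead prove $L^2$-type estimates off an exceptional set for the bad part of the atomic decomposition; that is the actual content of Theorem \ref{mainthm}.
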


\begin{thm}\label{Lpthm} Let $1<p<2$,
 $q<p'$ and $\la(p)=d(\frac 1p-\frac 12)-\frac 12$.  Then, for all $f\in L^p(\bbR^d)$, for all $\alpha>0$,
\[
\meas\Big(\Big\{x\in \bbR^d: \sup_{T>0} \Big(\frac 1T \int_0^T|R^{\la(p)} _t\! f(x) |^q dt\Big)^{1/q} >\alpha\Big\}\Big) 
\le C\alpha^{-p}\|f\|_{L^p(\bbR^d)}.
\]

\end{thm}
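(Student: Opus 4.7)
My plan is to derive Theorem~\ref{Lpthm} from the main weak-type inequality (Theorem~\ref{mainthm}) via a Calder\'on-Zygmund argument in which the averaging over the dilation parameter $t$ replaces the curvature/restriction input typically required for Bochner-Riesz estimates at a critical index. A key preliminary step is a dyadic frequency decomposition of the Riesz multiplier,
\[
(1 - \rho(\xi))_+^{\la(p)} = \sum_{j \ge 1} 2^{-j\la(p)} m_j(\xi),
\]
where $m_j$ is a smooth bump supported in a shell of thickness $\sim 2^{-j}$ around the cosphere $\{\rho = 1\}$. The associated frequency-localized pieces $S_t^j f$ of $R_t^{\la(p)} f$ enjoy the fundamental $L^2(dt)$ gain
\[
\int_0^T \|S_t^j f\|_2^2\, dt \lesssim 2^{-j} T \|f\|_2^2,
\]
obtained from Plancherel together with the observation that each frequency $\xi$ contributes to $S_t^j$ only on a $t$-window of relative width $\sim 2^{-j}$. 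This one estimate encodes all of the ``curvature-free'' smoothing available in the problem.

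Having fixed $\alpha > 0$, I would then perform a Calder\'on-Zygmund decomposition of $|f|^p$ at height $\alpha^p$, writing $f = g + b$ with $\|g\|_\infty \lesssim \alpha$, $\|g\|_2^2 \lesssim \alpha^{2-p}\|f\|_p^p$, and $b = \sum_Q b_Q$ a sum of mean-zero atoms on disjoint dyadic cubes $Q$ satisfying $\sum_Q |Q| \lesssim \alpha^{-p}\|f\|_p^p$. After absorbing $E = \bigcup_Q 2Q$ into the exceptional set, it remains to estimate the $q$-strong maximal operator $M^q$ applied to $g$ and to $b$ outside $E$.

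For the good part $g$, I would prove a weak-type $(2,2)$ bound on $M^q$. For $q \le 2$, H\"older reduces matters to the $L^2(dt/T)$ square function, which is controlled by the Plancherel/averaging computation above (using the trivial constraint $2\la(p) > -1$) together with a standard dyadic-$T$ maximal argument. For $2 < q < p'$, an analogous bound follows by interpolating this $L^2$-estimate with an $L^{p_1}$ bound at some $p_1 < p$, where $\la(p) > \la(p_1)$ and the Riesz means of index $\la(p)$ are genuinely $L^{p_1}$-bounded. For the bad part, the central estimate is the atomic inequality
\[
\bigl|\{x \notin E : M^q b_Q(x) > \alpha\}\bigr| \lesssim \alpha^{-p}\|b_Q\|_p^p,
\]
which I would establish by combining the vanishing moment of $b_Q$, the spatial decay of the kernels of the pieces $S_t^j$, and the averaging gain $2^{-j}$, summed against the $j$-weight $2^{-j\la(p)}$ and then summed over the cubes. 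This is essentially a vector-valued H\"ormander-type condition for the kernel family $\{K_t^{\la(p)}(x-\cdot) - K_t^{\la(p)}(x-y_Q)\}_t$ measured in $L^q(dt)$.

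The main obstacle is the endpoint balance. When $\la(p) < 0$ (that is, $p \in (2d/(d+1), 2)$), the dyadic pieces grow in $j$ like $2^{-j\la(p)}$ while the $L^2(dt)$ averaging furnishes only $2^{-j/2}$; the missing decay has to come from the $q$-moment integration in $t$ on the bad-part side, and the requirement $q < p'$ is exactly what forces the relevant sum to converge. Reconciling this delicate balance with the atomic bookkeeping, uniformly over the Calder\'on-Zygmund cubes, and upgrading the fixed-$T$ bound to a \emph{maximal} bound in $T$ via a standard dyadic-$T$ argument, constitutes the technical heart of the proof; the essential sharpness in part (iii) confirms that no improvement of the $q$-range is possible.
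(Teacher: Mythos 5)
Your outline correctly identifies the shell decomposition of the multiplier, the $L^2(dt)$ averaging gain $2^{-j/2}$ (upgraded to $2^{-j/q}$ in $L^q(dt)$), and the role of $q<p'$ in making the $j$-sum converge against the weight $2^{-j\la(p)}$. But the core of the argument has gaps that I do not see how to repair within the framework you propose. The most serious one concerns the bad part: you propose to prove, for each Calder\'on--Zygmund atom $b_Q$, a weak-type bound $\meas\{x\notin E: M^q b_Q(x)>\alpha\}\lesssim \alpha^{-p}\|b_Q\|_p^p$ and then to ``sum over the cubes.'' For $p>1$ this summation is not available: $L^{p,\infty}$ admits no countable quasi-triangle inequality with summable constants, and the level set of $M^q\big(\sum_Q b_Q\big)$ is not controlled by the union of the individual level sets without distributing the height $\alpha$ among the atoms, which destroys the bound. (For $p=1$ one can invoke the Stein--N.~Weiss lemma at a logarithmic cost, but not for $p>1$.) The paper avoids this by proving \emph{strong-type norm} bounds for grouped sums of bad pieces outside the exceptional set --- the inequalities \eqref{Lraway}, \eqref{Lpaway}, \eqref{Lpatomic} --- and applying Tshebyshev once at the end.

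Relatedly, those $L^p$ norm bounds for the bad part are not accessible by a direct kernel/vanishing-moment computation at the critical index: the $L^1$ kernel bound $\|K_j\|_1\sim 2^{j(d-1)/2}$ leaves a divergent factor $2^{j d/p'}$ after multiplication by $2^{-j\la(p)}$, and the only way the paper recovers $L^p$ for $1<p<2$ is by embedding the bad functions in an analytic family $B^{k,z}_{m,\sigma}$ (with $\gamma_{W,\mu,z}=\gamma_{W,\mu}^{p(1-z/2)-1}$) and interpolating between an $L^2$ orthogonality estimate at $\Re(z)=1$ and an $L^1$ estimate at $\Re(z)=0$ that uses the C\'ordoba sectorial decomposition into $O(2^{j(d-1)/2})$ pieces. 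This analytic-family device, going back to Christ's weak-type endpoint paper, is precisely the ``technical heart'' your outline defers; without it the balance you describe does not close. Note also that the decomposition underlying all of this is not a CZ decomposition of $|f|^p$ but an atomic decomposition adapted to the Peetre square function $\fS f$ (level sets $\Om_\mu$, Whitney cubes $W$, and a good/bad splitting according to the size of $\ga_{W,\mu}$ relative to $\alpha$), which is what makes the two endpoints of the interpolation simultaneously available. Finally, a smaller but genuine error: for the good part with $2<q<p'$ you propose interpolating with an $L^{p_1}$ bound for some $p_1<p$, claiming $\la(p)>\la(p_1)$; in fact $\la(\cdot)$ is decreasing, so $\la(p_1)>\la(p)$ and the means of index $\la(p)$ are \emph{not} bounded on $L^{p_1}$. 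The paper instead handles all $q\ge 2$ for the good part directly from the mixed-norm $L^2_x(L^q_t)$ bound of Lemma \ref{L2lemma}, which only requires $\la(p)+1/q>0$, i.e.\ $q<p'$.
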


As a consequence of these estimates we obtain
 $$\lim_{T\to 0} \Big(\frac 1T \int_0^T|R^{\la(p)}_t \!f(x)-f(x)  |^q dt\Big)^{1/q} =0$$ for almost every $x\in \bbR^d$, for every $f\in L^p(\bbR^d)$ when  $1<p<2$  and 
  $f\in h^1(\bbR^d)$ or $H^1(\bbR^d)$ when $p=1$.
  
We start the reduction of Theorems \ref{H1thm} and \ref{Lpthm} to Theorem \ref{mainthm} by replacing the multipliers for the Riesz means $R_t^\lambda$ with similar multipliers supported away from the origin, see \eqref{Slatdef} below. 

\subsection{\it Contribution near the origin}\label{origincontr}
Let $\upsilon_0\in C^\infty(\bbR)$ so that $\upsilon_0(\rho)=1$ for $\rho\le 4/5$ and $\upsilon_0(\rho)=0$ for $\rho\ge 9/10$. It is then standard that the maximal function $\sup_{t>0} |\cF^{-1}[\upsilon_0(\rho(\cdot/t)) (1-\rho(\cdot/t))_+^\la \widehat f]$ defines an operator of weak type $(1,1)$ and bounded on $L^p$ for all $p>1$. A small complication occurs if $\rho$ is not sufficiently smooth at the origin. We address this complication as follows.

Define, for $N>0$,  
the functions $u$, $u_N$ with domain $(0,\infty)$ by 
$u(\tau)=\upsilon_0(\tau)(1-\tau)^\la$ and 
$u_N(s)=u(s^{1/N})$. It is then straightforward to check that for all $M$
\[\int_0^\infty s^{M}  |u_N^{(M+1)} (s)| ds <\infty \] and we have the subordination formula (\cite{trebels})
\Be\label{subord} u(\rho(\xi))= u_N(\rho^N(\xi))= \frac{(-1)^{M+1}}{M!}\int_0^\infty\big (1-\frac{(\rho(\xi))^N}{s}\big)^M_+ 
s^M u_N^{(M+1)} (s) ds\Ee
which is proved by integration by parts.
Given any $m>0$ one has   $|\cF^{-1} [(1-\rho^N)^M_+ ](x)| \lc_m (1+|x|)^{-m}$
provided $M$ and $N$ are large enough.
This is used to show that  $\sup_{t>0}| \cF^{-1}[ u\circ\rho(\cdot/t)\widehat f]$  is dominated by a constant times the Hardy-Littlewood maximal function of $f$ (see also Lemma \ref{originHardythm}).

We can now replace the operator $R^\la_t$ in Theorems \ref{H1thm} and \ref{Lpthm} by $S_t^\la $ defined by
\Be\label{Slatdef}\widehat {S^\la_t f}(\xi)= (1-\upsilon_0(\rho(\xi/t)))(1-\rho(\xi/t))_+^\la \widehat f(\xi).\Ee

\subsection{\it Further decompositions}\label{Littlewood-Paley} We first recall  standard dyadic decompositions on the frequency side.
Let $\eta\in C^\infty_c(\bbR^d\setminus \{0\})$ such that $\eta$ is nonnegative, 
 \Be \label{eta=1}\eta (\xi)=1 \text { on } \{\xi: \rho(\xi/t) \in[1/4,4], \,\,1/2\le t\le 2\}. \Ee
Define $\cL_k f$ by $\widehat {\cL_k f}(\xi)= \eta(2^{-k}\xi)\widehat f(\xi)$.

We use  the nontangential version of 
 the Peetre maximal operators 
$$\fM_k f(x)= \sup_{|h|\le 2^{-k+10}d}|\cL_k f(x+h)|$$ 
and the associated square function 
\Be \label{fS} \fS f(x)= \Big(\sum_{k\in \bbZ} |\fM_k f(x)|^2 \Big)^{1/2}.\Ee
Then 

\begin{subequations}
\Be\label{PeetreineqH1}\|\fS f\|_{L^1} \le C \|f\|_{H^1},\Ee
and 
\Be\label{PeetreineqLp}\|\fS f\|_{L^p} \le C_p \|f\|_{L^p} ,\text{ $1<p<\infty$. }\Ee
\end{subequations}
see  (Peetre \cite{peetre}).

The inequalities in Theorems \ref{H1thm} and \ref{Lpthm} follow from
$$
\Big\| \sup_{T>0} \Big(\frac 1T\int_0^T |S_t^{\la(p) }\! f|^q dt\Big)^{1/q} 
\Big\|_{L^{p,\infty}}
\lc \|\fS f\|_p
$$ for $1\le p<2$, $q<p'$. Here $L^{p,\infty}$ is the  weak type Lorentz space and the 
expression $\|g\|_{L^{p,\infty}} =\sup_{\alpha>0}\alpha (\meas(\{x:|g(x)|>\alpha\}))^{1/p}$ is the standard quasi-norm on $L^{p,\infty}$.
We may, by H\"older's inequality, 
assume that $2\le q<p'$. 
We can then use
\begin{equation}\label{ellqsum}
\sup_{T>0} \Big(\frac 1T\int_0^T |S^{\la(p) }_t \!f(x)|^q\,dt\Big)^{1/q} 
\le 2^{1/q} \Big(\sum_{k\in \bbZ}  2^{-k} \int_{2^{k} }^{2^{k+1}} |S_t^{\la(p)} \!f(x)|^qdt\Big)^{1/q}.
\end{equation}

We now use the standard idea to decompose the  multiplier $(1-\upsilon_0\circ\rho) (1-\rho)_+^\la $ into pieces supported where 
$\rho(\xi)\in [1-2^{-j}, 1-2^{-j-2}]$.
Generalizing slightly we assume that we are given  $C^\infty$ functions $\vphi_j$ supported in $[1-2^{-j}, 1-2^{-j-2}]$ and satisfying
\[
\|\partial^n \vphi_j\|_\infty \le C_n 2^{jn}.
\]
for $n=0,1,2,\dots$. Let $I:=[1,2]$. 
For $t\in I$, $k\in \bbZ$  define
\Be\label{Tjkdef}
\widehat {T_j^k f}(\xi,t)
=
\vphi_j(\rho(2^{-k}t^{-1}\xi))\widehat f(\xi)
\Ee
We may decompose $S^\la_{2^k t}  f =\sum_{j\ge 1} 2^{-j\la} T_j^k f(x,t)$,  with $T^j_k$ of the form in \eqref{Tjkdef}. The asserted estimates for  $S^{\la(p)}_t$ follow now from weak type bounds for the expression on the right hand side of \eqref{ellqsum}. 
By \eqref{eta=1} we have $\eta(2^{-k}\xi)=1$ whenever $\rho(2^{-k}\xi/t)\in \text{supp}(\vphi_j)$ for any $t\in I$.
Thus after changing variables the desired estimate can be restated  as
\[
\Big\|\Big(\sum_{k\in \bbZ}  \int_I 
\Big|\sum_{j=1}^\infty 
2^{-j\la(p)}T_{j}^{k} \cL_k f(\cdot,t)
\Big |^qdt\Big)^{1/q}\Big\|_{L^{p,\infty}} \lc \|\fS f\|_p.
\]
Since $\ell^2\subset \ell^q$ for $q\ge 2$ this follows from the following  stronger statement,
our main estimate.

\begin{thm}\label{mainthm}
For $1\le p<2$, $\la(p)= d(1/p-1/2)-1/2$, $q<p'$, 
\[
\Big\|\Big(\sum_{k\in \bbZ}  \Big(\int_I 
\Big|\sum_{j=1}^\infty 
2^{-j\la(p)}T_{j}^{k} \cL_k f(\cdot ,t)
\Big |^q dt\Big)^{2/q}\Big)^{1/2} \Big\|_{L^{p,\infty}(\R^d)} \lc \|\fS f\|_{L^p(\R^d)}.
\]
\end{thm}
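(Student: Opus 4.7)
The plan is to combine an $L^2$ endpoint estimate with a Calder\'on--Zygmund decomposition adapted to $\fS f$, and then treat the resulting atomic bad part via an off-support $L^1$ estimate. Since $|I|=1$, the norm $(\int_I|\cdot|^q\, dt)^{1/q}$ is monotone nondecreasing in $q$, so I may assume $q\ge 2$ throughout.

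\emph{$L^2$ endpoint.} For $q\ge 2$, Minkowski's inequality applied to the vector-valued norm yields
\[
\Big\|\Big(\sum_k \big(\int_I|T_j^k \cL_k f|^q\, dt\big)^{2/q}\Big)^{1/2}\Big\|_{L^2_x}^2 \le \sum_k \Big(\int_I \|T_j^k \cL_k f\|_{L^2_x}^q\, dt\Big)^{2/q}.
\]
By Plancherel and the measure estimate $|\{t\in I:\rho(2^{-k}t^{-1}\xi)\in[1-2^{-j},1-2^{-j-2}]\}|\lesssim 2^{-j}$ one obtains the $L^2_t$-gain $(\int_I\|T_j^k\cL_k f\|_{L^2_x}^2\, dt)^{1/2}\lesssim 2^{-j/2}\|\cL_k f\|_{L^2}$, and Lyapunov interpolation with the trivial pointwise-in-$t$ bound $\|T_j^k\|_{L^2\to L^2}\le 1$ upgrades this to an $L^q_t$-gain of $2^{-j/q}$. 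Orthogonality of the $\cL_k$'s across $k$ together with the triangle inequality in $j$ then gives
\[
\text{LHS}\lesssim \sum_j 2^{-j(\lambda(p)+1/q)}\|f\|_{L^2},
\]
and the geometric series converges exactly when $\lambda(p)+1/q>0$. A direct computation shows $\lambda(p)+1/p'=(d-1)(1/p-1/2)>0$ for $1\le p<2$ and $d\ge 2$, so the required $L^2$ bound by $\|f\|_{L^2}\sim\|\fS f\|_{L^2}$ holds throughout the range $q<p'$.

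\emph{CZ splitting.} Given $\alpha>0$, set $\Omega_\alpha=\{x:M(\fS f)(x)>\alpha\}$, which has measure $\lesssim \alpha^{-p}\|\fS f\|_{L^p}^p$, Whitney-decompose it into cubes $\{Q\}$, and produce a splitting $f=g+\sum_Q b_Q$ with each $b_Q$ supported in a fixed dilate $Q^*$, $\int b_Q=0$, $\|b_Q\|_{L^1}\lesssim \alpha|Q|$, and $\|g\|_{L^2}^2\lesssim \alpha^{2-p}\|\fS f\|_{L^p}^p$. The good part is dispatched by the $L^2$ endpoint and Chebyshev, contributing $\lesssim \alpha^{-p}\|\fS f\|_{L^p}^p$ to the measure of $\{\text{LHS}>\alpha\}$.

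\emph{Bad part --- the main obstacle.} It remains to establish the off-diagonal estimate
\[
\sum_Q \int_{\R^d\setminus Q^{**}} \Big(\sum_k\Big(\int_I\Big|\sum_j 2^{-j\lambda(p)}T_j^k\cL_k b_Q(x,t)\Big|^q\, dt\Big)^{2/q}\Big)^{1/2}\, dx \;\lesssim\; \sum_Q\|b_Q\|_{L^1},
\]
which is $\lesssim \alpha^{1-p}\|\fS f\|_{L^p}^p$. The convolution kernel $K_j^k(\cdot,t)$ of $T_j^k$ is essentially concentrated in a shell of radius $\sim 2^{-k}$ and normal thickness $\sim 2^{j-k}$ with $L^1$ norm $\approx 2^{j(d-1)}$; pointwise control against the weight $2^{-j\lambda(p)}$ is therefore hopelessly loose, and all the decay must come from $t$-averaging (for each fixed $y$ only a $t$-subset of measure $\lesssim 2^{-j}$ contributes meaningfully) combined with the vanishing moment of $b_Q$ at scales $2^{-k}\gg \ell(Q)$. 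I would carry out a case analysis decomposing $k$ relative to $\log_2(1/\ell(Q))$ and $\R^d\setminus Q^{**}$ into annular shells around $Q$: use cancellation of $b_Q$ in the low-frequency regime, direct $t$-averaged off-support kernel bounds in the high-frequency regime, with the identity $\lambda(p)+1/q>0$ reappearing to balance the $t$-averaging gain against the multiplier weight. Summing over $Q$ and combining with the good-part bound yields the claimed $L^{p,\infty}$ inequality. The absence of any curvature hypothesis on $\Sigma_\rho$ is precisely why $t$-averaging, rather than stationary phase, must be the sole source of decay --- this kernel analysis is the technical heart of the proof.
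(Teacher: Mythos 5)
Your overall architecture (an $L^2$ endpoint with gain $2^{-j/q}$ from $t$-averaging for the good part, plus a Calder\'on--Zygmund splitting driven by $\fS f$) matches the paper's starting point, and your $L^2$ computation is essentially Lemma \ref{L2lemma}. But the bad-part step, which you correctly identify as the heart of the matter, rests on a claimed estimate that is false at the critical index, and the way you propose to use the condition $q<p'$ is not the mechanism that actually makes the theorem true.

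Concretely: your proposed off-diagonal bound
\[
\sum_Q \int_{\bbR^d\setminus Q^{**}} \Big(\sum_k\Big(\int_I\Big|\sum_j 2^{-j\la(p)}T_j^k\cL_k b_Q(x,t)\Big|^q dt\Big)^{2/q}\Big)^{1/2} dx \;\lc\; \sum_Q\|b_Q\|_{L^1}
\]
cannot hold. Take $p=1$, $Q$ a unit cube, and consider frequencies $k\ge 0$ (where the vanishing moment of $b_Q$ gives nothing) and kernel scales $2^{j-k}\gg 1$ (where the kernel of $T_j^k$ genuinely lives far outside $Q^{**}$, so the off-support restriction gives nothing either). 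The $L^1_x(L^q_t)\to L^1_x$ operator norm of $T_j^k$ is $\approx 2^{j\frac{d-1}{2}}$ with \emph{no} improvement from the $t$-average (this is Lemma \ref{Tnklemma}: the $L^1$ bound carries no factor $2^{-j/q}$; the $t$-averaging gain lives only at $L^2$), so after multiplying by $2^{-j\la(1)}=2^{-j\frac{d-1}{2}}$ each dyadic block $j$ contributes $O(1)\cdot\|b_Q\|_1$ and the sum over $j$ diverges. A related red flag: your bad-part analysis never uses $q<p'$ except through $\la(p)+1/q>0$, but for $d\ge 2$ that inequality holds for a range of $q>p'$ as well, while Theorem \ref{LpthmTd}(iii) and \S\ref{sharpnesssect} show the statement \emph{fails} for $q>p'$. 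So the balance $\la(p)+1/q>0$ cannot be the decisive mechanism.

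What the paper does instead, and what your sketch is missing, is twofold. First, the decomposition is finer than a single CZ splitting of $f$: each $\cL_k f$ is cut into pieces $e_R=\bbone_R\cL_k f$ over dyadic cubes $R$, organized by Whitney cubes $W\in\fW^\mu$ of the level sets of $\fS f$ with the $L^2$ control \eqref{basicL2}, and ``bad'' means $\ga_{W,\mu}>\alpha$ -- a \emph{lower} bound that is then exploited (via the normalization $\ga_{W,\mu}^{p-1}$ and Lemma \ref{Qaverage}) to prove $L^{q'}$ and $L^2$ bounds for the bad part, not $L^1$ bounds. Second, for the contributions where the kernel scale $2^{j-k}$ exceeds the cube scale (exactly the regime where your estimate breaks), the paper proves an $L^{q'}$ estimate on \emph{all} of $\bbR^d$ (\eqref{Lqprime-away-1}--\eqref{Lqprime-away-2}), obtained by interpolating a Christ--Sogge-type $L^2$ orthogonality argument with the C\'ordoba angular decomposition against an $L^1$ bound; the resulting gain $2^{-s(d-1)(\frac1p-\frac1{q'})}$ in the eccentricity $s=j-k-m$ is summable precisely because $\frac1p-\frac1{q'}=\frac1q-\frac1{p'}>0$, i.e.\ because $q<p'$. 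Only the contributions with kernel scale below the cube scale are handled by support considerations off the exceptional set, and even there the argument is an $L^1$ bound of the rapidly decaying kernel tails $T_j^{n,k}$ with $n$ large, not a H\"ormander-type cancellation estimate. An additional analytic-families interpolation in the exponent of $\ga_{W,\mu}$ is needed to pass from the endpoints $p=1,2$ to $1<p<2$. Without these ingredients the proposal does not close.
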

The theorem  will be proved in \S\ref{proofofmainthm}. Some preparatory material is contained in \S\ref{prelsect}.

\section{Preliminary estimates}\label{prelsect}
We gather elementary estimates for the operators $T^k_j$ defined in \eqref{Tjkdef}.
\begin{lemma} \label{L2lemma}For $2\le q\le \infty$,
\[
\Big\|\Big(\int_{1}^2 |T_j^k f(\cdot, t)|^qdt \Big)^{1/q} \Big\|_2
\lc 2^{-j/q} \|f\|_2.\]
\end{lemma}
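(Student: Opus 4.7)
The plan is to establish the two endpoint cases $q=2$ and $q=\infty$ and then interpolate in the Bochner space $L^2_x(L^q_t(I))$.

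For $q=2$, Plancherel in $x$ and Fubini give
\[
\Big\|\Big(\int_I |T_j^k f(\cdot,t)|^2 dt\Big)^{1/2}\Big\|_2^2 = \int |\wh f(\xi)|^2 \Big(\int_I |\varphi_j(\rho(2^{-k}t^{-1}\xi))|^2 dt\Big) d\xi.
\]
The homogeneity relation $\rho(\mu\xi)=\mu^{1/b}\rho(\xi)$ yields $\frac{d}{dt}\rho(2^{-k}t^{-1}\xi) = -\rho(2^{-k}t^{-1}\xi)/(bt)$, which is bounded away from $0$ uniformly on the set where $\rho(2^{-k}t^{-1}\xi)\in[1-2^{-j},1-2^{-j-2}]$ and $t\in I$. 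Hence, for every $\xi$, the inner integral is $\lc 2^{-j}$, and one obtains the claimed bound with constant $2^{-j/2}$.

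For $q=\infty$, the goal is $\|\sup_{t\in I}|T_j^k f(\cdot,t)|\|_2 \lc \|f\|_2$. I would apply the one-dimensional Sobolev-type inequality
\[
\sup_{t\in I}|g(t)|^2 \lc \int_I |g(\tau)|^2 d\tau + \Big(\int_I |g|^2\Big)^{1/2}\Big(\int_I |g'|^2\Big)^{1/2}
\]
with $g(t)=T_j^k f(x,t)$, then integrate in $x$ and use Cauchy--Schwarz in $x$. The $L^2_x(L^2_t)$ factor is already controlled by $2^{-j/2}\|f\|_2$. The remaining task is to bound $\|(\int_I |\partial_t T_j^k f|^2 dt)^{1/2}\|_2$. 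The symbol of $\partial_t T_j^k$ equals $\varphi_j'(\rho(2^{-k}t^{-1}\xi))\cdot\partial_t\rho(2^{-k}t^{-1}\xi)$, which is $O(2^j)$ pointwise and supported in a $t$-set of measure $O(2^{-j})$ for each fixed $\xi$. The Fubini/Plancherel computation therefore yields $\lc 2^{j/2}\|f\|_2$. Multiplied together, the cross term gives $O(\|f\|_2^2)$, and combining everything produces the $q=\infty$ estimate, which matches the $2^{-j/\infty}=1$ loss factor.

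Finally, complex interpolation of the linear map $f\mapsto T_j^k f$ between $L^2(\R^d)\to L^2_x(L^2_t(I))$ and $L^2(\R^d)\to L^2_x(L^\infty_t(I))$ gives the bound $\lc 2^{-j/q}\|f\|_2$ for all intermediate $q\in[2,\infty]$. The only substantive estimate is the one-variable derivative computation for $t\mapsto\rho(2^{-k}t^{-1}\xi)$, which is immediate from the $b$-homogeneity of $\rho$, and I anticipate no serious obstacle in this lemma.
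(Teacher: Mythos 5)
Your proposal is correct and follows essentially the same route as the paper: both arguments rest on the two $L^2_x(L^2_t)$ estimates, namely $\lc 2^{-j/2}\|f\|_2$ for $T_j^k f$ and $\lc 2^{j/2}\|f\|_2$ for $\partial_t T_j^k f$ (via Plancherel and the observation that $\partial_t\rho(2^{-k}t^{-1}\xi)\approx\rho(2^{-k}t^{-1}\xi)/t$ by homogeneity), combined with a one-dimensional Sobolev/Gagliardo--Nirenberg inequality on $I$. The only cosmetic difference is that the paper passes to general $q$ by the pointwise convexity inequality $\|\gamma\|_{L^q(I)}\le\|\gamma\|_{L^2(I)}^{2/q}\|\gamma\|_{L^\infty(I)}^{1-2/q}$ followed by H\"older in $x$, whereas you invoke complex interpolation of the Bochner spaces $L^2_x(L^q_t(I))$; both yield the same $2^{-j/q}$ gain.
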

\begin{proof}
Use the convexity inequality, 
$\|\ga\|_q\le \|\gamma\|_2^{2/q}\|\gamma\|_\infty^{1-2/q}$,
for $\ga\in L^q([1, 2])$, and for $\ga\in C^1$ we have
$\|\ga\|_\infty \lc \|\ga\|_2^{1/2}(\|\ga\|_2+\|\ga'\|_2)^{1/2}$ and hence
\Be\label{2-q-bd}\Big(\int_{1}^2 |\gamma(t)|^qdt\Big)^{1/q}
 \lc \Big(\int_{1}^2 |\gamma(t)|^2dt\Big)^{1/2} +
\Big(\int_{1}^2 |\gamma(t)|^2dt\Big)^{\frac 12(\frac 12+\frac 1q)} 
\Big(\int_{1}^2 |\gamma'(t)|^2dt\Big)^{\frac 12(\frac 12-\frac 1q)} 
.\Ee
We obtain after some standard 
estimations
\begin{align*}
\Big\|\Big(\int_{1}^2 |T_j^k f(\cdot, t)|^2dt\Big)^{1/2}\Big\|_2 +2^{-j}\Big\|\Big(\int_{1}^2 |\frac{d}{dt} T_j^k 
f(\cdot, t)|^2dt\Big)^{1/2}\Big\|_2 
&\lc 2^{-j/2} \|f\|_2
\end{align*}
and then the assertion of the lemma follows from  \eqref{2-q-bd} applied to
 $\gamma(t)={T_j^k f}(x,t)$, followed by H\"older's inequality in $x$.
\end{proof}

To prove the $L^1$ 	estimate we rely on a  spherical decomposition introduced in \cite{cordoba}. For each fixed $j\geq 1$, we use a $C^\infty$ partition of unity $\{ \chi_{j,\nu} \}_{\nu \in \cZ_j}$ for an index set $\cZ_j$ with $\#\cZ_j=O(2^{j(d-1)/2})$, which has the following properties; each $\chi_{j,\nu}$ is homogeneous of degree $0$, the restriction of the support of $\chi_{j,\nu}$ to the sphere $\{\xi:|\xi|=1\}$ is supported in a set of diameter $2^{-j/2}$ and each unit vector is contained in the supports of $\chi_{j,\nu}$ for $O(1)$ indices $\nu$. We may choose the index set $\cZ_j$ so that for every $\nu$, there is a unit vector $\xi_{j,\nu}\in \supp{ \chi_{j,\nu}}$ so that $\dist(\xi_{j,\nu},\xi_{j,\nu'})\ge c 2^{-j/2}$ for $\nu\neq\nu'$. We assume that the $\chi_{j,\nu}$ satisfy the  natural differential estimates, i.e.
$\partial_\xi^{\beta} \chi_{j,\nu}(\xi) = O(2^{\frac{j}{2}(\beta_1+\dots\beta_d)})$.
Define $T^k_{j,\nu}$ by
\Be\label{Tjknudef}
\widehat {T_{j,\nu}^{k} f}(\xi,t)
=
\chi_{j,\nu}(\xi)\vphi_j(\rho(2^{-k}t^{-1}\xi))\widehat f.
\Ee
Let 
$K_j= \cF^{-1}[ 
\vphi_j(\rho(\cdot))],$ and $K_{j,\nu}= \cF^{-1}[ 
\vphi_j(\rho(\cdot))\chi_{j,\nu}].$
Let 
$\Phi_0\in C^\infty_c(\bbR^d)$ supported in $\{x:|x|\le 1\}$ so that $\Phi_0(x)=1$ 
for $|x|\le 1/2$ and, for $n\ge 1$,  let $\Phi_n(x)= \Phi_0(2^{-n}x)-\Phi_0(2^{1-n}x)$.
Define, for $n=0,1,2,\dots$,
\begin{align*}
K_{j}^n(x) &= K_{j}(x)\Phi_n(2^{-j}x)
\\K_{j,\nu}^n(x) &= K_{j,\nu}(x)\Phi_n(2^{-j}x)
\end{align*}
and
\begin{align*}
T_{j}^{n,k} f(x,t)  &= (2^{k} t)^d K_{j}^n(2^kt\cdot) * f,
\\
T_{j,\nu}^{n,k} f(x,t)  &= (2^{k} t)^d K_{j,\nu}^n(2^kt\cdot) * f.
\end{align*}
Then 
\Be \label{decompositions}T_j^k f=\sum_{\nu\in \cZ_j} T_{j,\nu}^kf =\sum_{n=0}^\infty T_j^{n,k}\!f
=\sum_{n=0}^\infty 
\sum_{\nu\in \cZ_j} T_{j,\nu}^{n,k}  \!f.\Ee

\begin{lemma} \label{multsize}
Let $\Sigma_\rho=\{\xi:\rho(\xi)=1\}$. Then 
$$|\widehat  {K_j^n}(\xi)|
\le C_{M_0,M_1} 2^{-nM_0} (1+2^j\text{\rm dist}(\xi,\Sigma_\rho))^{-M_1}.$$
\end{lemma}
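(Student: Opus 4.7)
The plan is to convert the physical-side cutoff $\Phi_n(2^{-j}\cdot)$ into vanishing moments on the Fourier side. For $n\ge 1$, write $\Phi_n(2^{-j}x)=\psi(2^{-(n+j)}x)$ where $\psi(y)=\Phi_0(y)-\Phi_0(2y)$. Since $\Phi_0\equiv 1$ on $\{|y|\le 1/2\}$, the function $\psi$ vanishes identically on $\{|y|\le 1/4\}$, and hence $\partial^\alpha\psi(0)=0$ for every multi-index $\alpha$. By the standard Fourier correspondence, $\widehat\psi$ has vanishing moments of all orders:
\begin{equation*}
\int \zeta^\alpha \widehat\psi(\zeta)\,d\zeta = 0 \quad \text{for every multi-index } \alpha.
\end{equation*}
On the Fourier side, $\widehat{K_j^n}=\widehat{K_j}\ast\mathcal{F}[\psi(2^{-(n+j)}\cdot)]$; changing variables $\zeta'=2^{n+j}\zeta$ gives
\begin{equation*}
\widehat{K_j^n}(\xi)=\int \varphi_j(\rho(\xi-2^{-(n+j)}\zeta'))\,\widehat\psi(\zeta')\,d\zeta'.
\end{equation*}

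I would then Taylor-expand $\zeta'\mapsto \varphi_j(\rho(\xi-2^{-(n+j)}\zeta'))$ in $\zeta'$ about the origin to order $M$. Every polynomial term is annihilated by the vanishing-moment identity against $\widehat\psi$, so only the integral remainder contributes, and it is dominated pointwise by
\begin{equation*}
|R_M(\zeta')|\lesssim (2^{-(n+j)}|\zeta'|)^{M+1}\sup_{s\in[0,1]} \bigl|\partial^{M+1}(\varphi_j\circ\rho)(\xi-s\,2^{-(n+j)}\zeta')\bigr|.
\end{equation*}
The chain rule together with the hypothesis $\|\partial^n\varphi_j\|_\infty\lesssim 2^{jn}$ yields $\|\partial^\alpha(\varphi_j\circ\rho)\|_\infty\lesssim 2^{j|\alpha|}$ on a $C2^{-j}$-neighbourhood of $\Sigma_\rho$ (and zero elsewhere); this collapses the prefactor to $2^{-n(M+1)}|\zeta'|^{M+1}$ times the indicator that $\xi-s\,2^{-(n+j)}\zeta'$ lies within $C2^{-j}$ of $\Sigma_\rho$ for some $s\in[0,1]$. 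Writing $\delta=\dist(\xi,\Sigma_\rho)$, when $2^j\delta\gg 1$ this indicator forces $|\zeta'|\gtrsim 2^n(2^j\delta-C)$; integrating against the Schwartz tail $|\widehat\psi(\zeta')|\lesssim (1+|\zeta'|)^{-L}$ with $L=M+d+M_1+1$ then produces a bound of the form $2^{-n(M+1)}(2^n\,2^j\delta)^{-M_1}$, which rearranges to $2^{-nM_0}(2^j\delta)^{-M_1}$ after choosing $M\ge M_0$. When $2^j\delta\lesssim 1$, one simply drops the indicator and bounds the remainder by $2^{-n(M+1)}\int|\zeta'|^{M+1}|\widehat\psi(\zeta')|\,d\zeta'\lesssim 2^{-nM_0}$; the factor $(1+2^j\delta)^{-M_1}$ is comparable to $1$ in this regime.

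The case $n=0$ must be treated separately, since $\Phi_0$ does not vanish near the origin and Taylor cancellation is unavailable. For $2^j\delta\lesssim 1$, Young's inequality gives $|\widehat{K_j^0}(\xi)|\le\|\widehat{K_j}\|_\infty\,\|\widehat{\Phi_0}\|_1\lesssim 1$. For $2^j\delta\gg 1$, the analogous change of variables yields
\begin{equation*}
\widehat{K_j^0}(\xi)=\int \varphi_j(\rho(\xi-2^{-j}\zeta'))\,\widehat{\Phi_0}(\zeta')\,d\zeta',
\end{equation*}
and the support restriction forces $|\zeta'|\gtrsim 2^j\delta$; Schwartz decay of $\widehat{\Phi_0}$ directly yields $(2^j\delta)^{-M_1}$.

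The main conceptual step is recognizing that the dyadic annular profile of $\psi$ (in particular that $\psi\equiv 0$ in a full neighbourhood of the origin) gives $\widehat\psi$ vanishing Fourier moments of every order, which is exactly what upgrades the crude convolution estimate into the arbitrarily strong gain $2^{-nM_0}$. Once that observation is in hand, the remainder of the argument is routine bookkeeping of powers of $2^n$, $2^j$ and $\delta$ in the Taylor remainder.
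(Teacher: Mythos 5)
Your proposal is correct and follows essentially the same route as the paper's sketch: you rewrite $\widehat{K_j^n}(\xi)$ as the integral of $\varphi_j\circ\rho$ against the (rescaled) Fourier transform of the annular bump, exploit its vanishing moments via Taylor's formula, and then use the $O(2^{-j})$-neighbourhood support of $\varphi_j\circ\rho$ together with $\|\partial^\alpha(\varphi_j\circ\rho)\|_\infty\lesssim 2^{j|\alpha|}$ and the Schwartz decay of $\widehat\psi$ to harvest both the $2^{-nM_0}$ gain and the $(1+2^j\dist(\xi,\Sigma_\rho))^{-M_1}$ decay, treating $n=0$ separately without cancellation. The only difference is that you carry out in detail the bookkeeping the paper labels "straightforward."
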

\begin{proof} [Sketch of Proof]
Let $\Psi(x)=\Phi_0(x/2)-\Phi_0(x)$. Then, for $n\ge 1$, we may use that $\widehat \Psi$ has  vanishing moments   and  write 
\Be \label{taylor} \begin{aligned}
\widehat  {K_j^n}(\xi) &=\int \varphi_j(\rho(\xi-y)) 2^{(j+n)d} \widehat \Psi(2^{j+n} y) dy
\\
&=\int_0^1\frac{(1-s)^{N-1}}{(N-1)!} \int  \inn y\nabla^N \big[\varphi_j\circ\rho]  (\xi-sy)  2^{(j+n)d} \widehat \Psi(2^{j+n} y) dy\, ds
\end{aligned}
\Ee
by Taylor's formula. The estimate is now straightforward.
When $n=0$ we just use the first line in \eqref{taylor} with $\Psi$ replaced by $\Phi_0$.
\end{proof}

For each $\nu$ choose $\xi_{j,\nu}$ such that $\rho(\xi_{j,\nu})=1$ and $\xi_{j,\nu}\in \text{supp}( \chi_{j,\nu})$. Take $e_{j,\nu} =\frac{\nabla\rho(\xi_{j,\nu})}{|\nabla\rho(\xi_{j,\nu})|}$ and let $P_{j,\nu}$ be the orthogonal projection to $e_{j,\nu}^\perp$, i.e.
\Be\label{projection} P_{j,\nu} h=h-\inn{h}{e_{j,\nu}}e_{j,\nu}. \Ee

\begin{lemma} \label{intbypartsest}
For every $M\ge 0$,
\Be\label{Kjnukernelestimate}\sup_{t\in I} |t^dK_{j,\nu}(tx)|\le C(M) \frac{2^{-j(d+1)/2}}
{(1+2^{-j} |x|)^{M} (1+2^{-j/2}|P_{j,\nu}(x)|)^M}.
\Ee
\end{lemma}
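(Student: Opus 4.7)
The plan is to prove this by integration by parts, exploiting the curved-plate geometry of the support of the multiplier. With $t\in I=[1,2]$, the Fourier transform of $t^d K_{j,\nu}(t\cdot)$ equals $\vphi_j(\rho(\xi/t))\chi_{j,\nu}(\xi)$ (using that $\chi_{j,\nu}$ is homogeneous of degree $0$). Its support is contained in a region of measure $\sim 2^{-j(d+1)/2}$, shaped like a curved plate of thickness $\sim 2^{-j}$ in the normal direction $e_{j,\nu}$ and extent $\sim 2^{-j/2}$ in each of the $d-1$ orthogonal tangential directions. Since $\rho(\xi/t)=t^{-1/b}\rho(\xi)$ with $t^{-1/b}$ bounded above and below on $I$, and since $\nabla\rho(t^{1/b}\xi_{j,\nu})\parallel\nabla\rho(\xi_{j,\nu})$ by homogeneity, one reduces without loss to the case $t=1$ and works with $m(\xi)=\vphi_j(\rho(\xi))\chi_{j,\nu}(\xi)$, all estimates holding uniformly in $t\in I$.

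The key derivative bound to establish is: for any unit vector $v\perp e_{j,\nu}$ and nonnegative integers $\alpha,\beta$,
\[
|\partial_{e_{j,\nu}}^\alpha \partial_v^\beta m(\xi)|\lesssim 2^{j\alpha}\,2^{(j/2)\beta}.
\]
For $\chi_{j,\nu}$ this is immediate from the assumed estimate $|\partial^\gamma\chi_{j,\nu}|\lesssim 2^{(j/2)|\gamma|}$. For $\vphi_j\circ\rho$ one uses $\vphi_j^{(k)}=O(2^{jk})$ together with smoothness of $\rho$; the crucial point for the tangential gain is that $\inn{v}{\nabla\rho(\xi_{j,\nu})}=0$, since $e_{j,\nu}\parallel\nabla\rho(\xi_{j,\nu})$. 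By Taylor expansion this yields $|\inn{v}{\nabla\rho(\xi)}|\lesssim |\xi-\xi_{j,\nu}|\lesssim 2^{-j/2}$ on the support, so that $|\partial_v(\vphi_j\circ\rho)|\lesssim 2^j\cdot 2^{-j/2}=2^{j/2}$, with higher derivatives handled by the same device combined with Leibniz.

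Choosing $v=P_{j,\nu}x/|P_{j,\nu}x|$ and integrating by parts $N$ times in each of the directions $e_{j,\nu}$ and $v$ (using that $\partial_{e_{j,\nu}}$ and $\partial_v$ act on $e^{2\pi i\inn{\xi}{x}}$ with eigenvalues $2\pi i\inn{x}{e_{j,\nu}}$ and $2\pi i|P_{j,\nu}x|$ respectively), and combining with the trivial bound obtained from the support measure, yields
\[
|K_{j,\nu}(x)|\lesssim 2^{-j(d+1)/2}\bigl(1+2^{-j}|\inn{x}{e_{j,\nu}}|\bigr)^{-N}\bigl(1+2^{-j/2}|P_{j,\nu}x|\bigr)^{-N}.
\]
To arrive at the form in the statement, I apply the elementary inequality
\[
\bigl(1+2^{-j}|\inn{x}{e_{j,\nu}}|\bigr)\bigl(1+2^{-j/2}|P_{j,\nu}x|\bigr)\geq 1+2^{-j}|x|,
\]
which follows from $|x|\le |\inn{x}{e_{j,\nu}}|+|P_{j,\nu}x|$ and $2^{-j}\le 2^{-j/2}$; splitting a factor $N=2M$ into two halves then gives the stated bound with $M$ after absorbing one copy of each factor into $(1+2^{-j}|x|)^{-M}$.

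The main obstacle — and the real content — is the tangential derivative estimate. It rests on the geometric fact that $v\perp\nabla\rho(\xi_{j,\nu})$ forces the first tangential derivative of $\rho$ to be $O(2^{-j/2})$ across the $2^{-j/2}$-cap $\supp{\chi_{j,\nu}}$, supplying precisely the gain needed to match the tangential scale of the cap and ensuring that no curvature hypothesis on $\Sigma_\rho$ enters.
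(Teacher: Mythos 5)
Your argument is correct and is exactly the standard integration-by-parts proof that the paper invokes without detail (it simply cites C\'ordoba, Christ--Sogge, and Seeger for this lemma); the key points — uniformity in $t\in I$ via homogeneity of $\rho$ and $\chi_{j,\nu}$, the anisotropic derivative bounds $|\partial_{e_{j,\nu}}^\alpha\partial_v^\beta m|\lesssim 2^{j\alpha}2^{j\beta/2}$ coming from $v\perp\nabla\rho(\xi_{j,\nu})$, and the conversion of the $(1+2^{-j}|\inn{x}{e_{j,\nu}}|)^{-N}$ factor into the stated $(1+2^{-j}|x|)^{-M}$ form — are all handled correctly. (The only blemish is the harmless exponent slip $t^{1/b}$ versus $t^{b}$ in the dilate of $\xi_{j,\nu}$, which does not affect the parallelism argument.)
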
 
\begin{proof} 
This  is standard (and follows after integration by parts), see, e.g.,
\cite{cordoba}, \cite{christ-sogge-survey}, or \cite{seeger-archiv}.
\end{proof}

\begin{lemma}\label{Tnklemma}
(i) For $k\in \bbZ$, 
$$\big\|\sup_{t\in I} |T^{n,k}_{j} f(\cdot,t)|\,\big \|_1 \le C_N 2^{j\frac{d-1}{2}} 2^{-nN} \|f\|_1.$$

(ii) For $1< p\le 2$, $q\le p'$ and $k\in \bbZ$,
$$\Big \|\Big(\int_{ I} |T^{n,k}_{j}f(\cdot,t)|^q dt\Big)^{1/q} \Big \|_p \le C_N 2^{j(d(\frac 1p-\frac 12)-\frac 12)} 2^{-nN}  \|f\|_p.$$

(iii) For $2\le q\le \infty$,
\[
\Big\|\Big(\int_{1}^2 |T_j^{n,k} f(\cdot, t)|^qdt \Big)^{1/q} \Big\|_2
\lc 2^{-j/q} 2^{-nN} \|f\|_2.\]
\end{lemma}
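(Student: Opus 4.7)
By the dilation identity $T^{n,k}_j f(x,t) = T^{n,0}_j[f(2^{-k}\cdot)](2^k x, t)$ and the scale invariance of all the norms involved, the three estimates reduce to the case $k=0$.

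For (i), the maximal operator $f\mapsto \sup_{t\in I}|T^{n,0}_j f(\cdot,t)|$ is pointwise dominated by convolution with $K^{*,n}_j(x):=\sup_{t\in I}|t^d K_j^n(tx)|$, so it suffices to show $\|K^{*,n}_j\|_1 \lesssim 2^{j(d-1)/2}2^{-nN}$. I write $K_j^n = \sum_{\nu\in\cZ_j} K_{j,\nu}\Phi_n(2^{-j}\cdot)$, apply Lemma \ref{intbypartsest}, and use the support restriction $|x|\approx 2^{j+n}$ coming from $\Phi_n$ (which forces the slowly decaying factor $(1+2^{-j}|x|)^{-M}$ to contribute $O(2^{-nM})$). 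A short integration in coordinates aligned with $e_{j,\nu}$ and $e_{j,\nu}^\perp$ shows that the $L^1$ norm of each $\nu$-summand is $O(2^{-nN})$. Summing over the $\#\cZ_j \approx 2^{j(d-1)/2}$ directions gives the bound.

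For (iii), I revisit the proof of Lemma \ref{L2lemma} with $K_j$ replaced by $K_j^n$. Applying the inequality \eqref{2-q-bd} pointwise in $x$ to $\gamma(t)= T^{n,0}_j f(x,t)$ and then H\"older in $x$, the task reduces to proving
\[
\|T^{n,0}_j f\|_{L^2(\bbR^d;L^2(I))} \lesssim 2^{-j/2}\,2^{-nN}\|f\|_2, \qquad \|\partial_t T^{n,0}_j f\|_{L^2(\bbR^d;L^2(I))} \lesssim 2^{j/2}\,2^{-nN}\|f\|_2.
\]
Both follow from Plancherel: Lemma \ref{multsize} supplies the $2^{-nN}$ factor, and integrating in $t$ provides an extra $2^{-j}$, since for each fixed $\xi$ the set $\{t\in I:\rho(\xi/t)\in[1-2^{-j},1+2^{-j}]\}$ has measure $O(2^{-j})$. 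The derivative $\partial_t$ costs one factor of $2^j$ from hitting a bump of frequency width $2^{-j}$, while $|\xi|$ is uniformly bounded on the relevant support. Inserting these two bounds into \eqref{2-q-bd} yields the stated $2^{-j/q}2^{-nN}\|f\|_2$ estimate for $q\ge 2$.

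For (ii), I interpolate by Riesz--Thorin the $L^1\!\to\! L^1(L^\infty(I))$ bound $\lesssim 2^{j(d-1)/2}2^{-nN}$ from (i) with the $L^2\!\to\! L^2(L^2(I))$ bound $\lesssim 2^{-j/2}2^{-nN}$ (the $q=2$ case of (iii)). The parameter $\theta=2/p'$ forces the Bochner target exponent to be $p'$, and the interpolated operator norm equals
\[
\bigl(2^{j(d-1)/2}\bigr)^{1-2/p'}\bigl(2^{-j/2}\bigr)^{2/p'}\,2^{-nN} \;=\; 2^{j\la(p)}\,2^{-nN},
\]
which is (ii) at the endpoint $q=p'$. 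For $q<p'$, the continuous inclusion $L^{p'}(I)\hookrightarrow L^q(I)$ (valid since $|I|=1$) finishes the proof. The decisive technical step is the $L^2(L^2)$ estimate in (iii); once the $2^{-j}$ averaging gain is in hand, part (i) and the interpolation are essentially routine.
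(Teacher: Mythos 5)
Your proposal is correct and follows essentially the same route as the paper: part (i) via the kernel bounds of Lemma \ref{intbypartsest} summed over the $O(2^{j(d-1)/2})$ sectors $\nu$, part (iii) by rerunning the argument of Lemma \ref{L2lemma} (the convexity inequality \eqref{2-q-bd} plus Plancherel with Lemma \ref{multsize} supplying the $2^{-nN}$ gain), and part (ii) by (vector-valued) complex interpolation between the $L^1\to L^1(L^\infty)$ and $L^2\to L^2(L^2)$ endpoints, with H\"older in $t$ handling $q<p'$. The exponent bookkeeping checks out, so no gap to report.
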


\begin{proof}

Lemma \ref{intbypartsest}  easily implies
$\|\sup_{t\in I} |T^{n,k}_{j,\nu} f(\cdot,t)|\,\|_1 \le C_N 2^{-nN} \|f\|_1$ and 
part (i) follows after summing in $\nu$. 
Using Lemma  \ref{multsize} we see that the proof of Lemma \ref{L2lemma} also gives
\begin{align*}
\Big\|\Big(\int_{1}^2 |T_j^{n,k} f(\cdot, t)|^2 dt\Big)^{1/2}\Big\|_2 \lc_N 2^{-nN} 2^{-j/2} \|f\|_2.
\end{align*}
Part (ii) now follows by complex interpolation.

Part (iii) for $q=2$ is just the previous displayed inequality. For $q>2$ it   follows by the argument in Lemma \ref{L2lemma} (\cf. \eqref{2-q-bd}) applied to $T^{n,k}_j$ in place of $T^k_j$,
in conjunction with Lemma \ref{multsize}.
\end{proof}

\section{Proof of Theorem \ref{mainthm}} \label{proofofmainthm}
The proof combines ideas that were used in the proof of weak type inequalities for 
Bochner-Riesz means and other radial multipliers, 
 and elsewhere  (\cite{fefferman}, \cite {christ-wt}, \cite{christ-pp}, 
\cite{christ-sogge},
\cite{seeger-indiana}). It combines atomic decompositions with Calder\'on-Zygmund estimates
using $L^r$-bounds  for $r>p$ in the complement of suitable exceptional sets together with
analytic interpolation arguments inspired by \cite{christ-pp}.

In this section we  fix  a Schwartz function $f$ whose Fourier transform has compact support in $\bbR^d\setminus \{0\}$.
Observe that  then $\cL_kf=0$ for all but a finite number of indices $k$ (depending on $f$). This assumption together with the Schwartz bounds can be used to justify the a priori finiteness of various expressions showing up in the arguments  below, but they do not enter quantitatively in the estimates.

We  need to prove the inequality
\Be\label{mainthmineq}
\meas\Big\{  x\in \bbR^d: \Big(\sum_k\Big[\int_I\Big|\sum_{j=1}^\infty 
2^{-j\la(p)}T_{j}^{k} \cL_k  f(x,t)
\Big|^q dt\Big]^{2/q}\Big)^{1/2} >\alpha\Big\} 
\lc \alpha^{-p}\|\fS f\|_p^p,\Ee
for arbitrary but fixed $\alpha>0$. The implicit constant does not depend on $\alpha$ and the choice of $f$.

\subsection{\it Preliminaries on atomic decompositions} \label{preliminaries}
Let $\cR_k$ be the set of dyadic cubes  of side  length  $2^{-k}$ so that each  $R\in \cR_k$ is of the form
$\prod_{i=1}^d[n_i2^{-k}, (n_i+1)2^{-k})$ for some $n\in \bbZ^d$.
For $\mu\in \bbZ$ let  $$\Om_\mu= \{x: |\fS f(x)|>2^{\mu}\}$$
and let $\cR_k^\mu$ be the set of dyadic  cubes  of length $2^{-k}$ with the property that 
$$|R\cap\Omega_\mu|\ge |R|/2 \text{ and } |R\cap \Om_{\mu+1}|< |R| /2.$$
Clearly if $\fS f\in L^p$ then every dyadic cube in $\cR_k$ belongs to exactly one of the sets 
$\cR^\mu_k$.
We then have (\cite{crf})
\Be\label{basicL2} 
\sum_{k\in \bbZ} \sum_{R\in \cR^\mu_k} \int_R|\cL_k f|^2dx  \lc 2^{2\mu} \meas (\Om_\mu).
\Ee
For completeness we give the argument. Observe that
\[|\cL_kf(x)|\le \fM_k f(z), \quad\text{for  $x, z\in R$, $R\in \cR^\mu_k$}.\] Let 
$$\widetilde \Om_\mu= \{x: M_{HL} \bbone_{\Om_\mu} > 10^{-d}\},$$  where $M_{HL}$ denotes  the Hardy-Littlewood maximal operator. Then  $$\meas(\widetilde\Om_\mu)\lc \meas (\Om_\mu)$$ and we have $\cup_k  \cup_{R\in \cR^\mu_k} R\subset \widetilde\Om_\mu$.
Now 
\begin{align*}
&\sum_{k\in \bbZ} \sum_{R\in \cR^\mu_k} \|\bbone_R \cL_k f\|_2^2
\le \sum_{k\in \bbZ} \sum_{R\in \cR^\mu_k}2  \int_{R\setminus \Om_{\mu+1}}|\fM_k f(x)|^2 dx
\\ 
&\le 2 \int_{\widetilde\Om_\mu\setminus \Om_{\mu+1}} \sum_{k\in \bbZ} 
|\fM_k f(x)|^2 dx
\le 2^{2\mu+1} \meas(\widetilde \Om_\mu) \le C 2^{2\mu} \meas(\Om_\mu)
\end{align*} which yields  \eqref{basicL2}.

Next we work with a Whitney decomposition of the open set $\widetilde \Om_\mu$, 
which is  a disjoint union of dyadic cubes $W$, such that
$$\diam (W) \le \dist(W, \widetilde \Om_\mu^\complement) \le 4 \diam (W).$$
See 
\cite[ch. VI.1]{stein-book}.
We denote by $\fW^\mu$ the collection of these Whitney cubes.
Each $R\in \cR_k^\mu$ is contained in a unique $W(R)\in \fW^\mu$.
For each  $W$ define
\Be \label{Rkmw}
\cR_k^{\mu}(W)= \{R\in \cR_k^\mu: \, R\subset W\}
\Ee and
$$\ga_{W,\mu} = \Big(\frac{1}{|W|} \sum_{k}\sum_{\substack {R\in \cR_k^{\mu}(W)}}\int_R|\cL_k f(y)|^2 dy\Big)^{1/2}.
$$
Define 
\Be\label{Udef}U(x)= \sum_\mu\sum_{W\in \fW^\mu} \ga_{W,\mu}^p \bbone_W(x).
\Ee
Observe that
\Be\label{Uestderivation}
\begin{aligned} 
\|U\|_1&= \sum_\mu\sum_{W\in \fW^\mu} |W|\ga_{W,\mu}^p  
=\sum_\mu\sum_{W\in \fW^\mu} |W|^{1-p/2} (|W|^{1/2}\ga_{\mu,W})^{p}
\\
&\le \sum_\mu 
\Big(\sum_{W\in \fW^\mu}|W|\Big)^{1-p/2} 
\Big(\sum_{W\in \fW^\mu} |W| \ga_{\mu,W}^2\Big)^{p/2}
\\
&\le \sum_\mu |\widetilde\Om_\mu|^{1-p/2}  
 \Big( \sum_{k}\sum_{W\in \fW^\mu}\sum_{\substack {R\in \cR_k^{\mu}(W)}}\|\bbone_R\cL_k f\|_2^2\Big)^{p/2}
\\
&\lc \sum_\mu |\Om_\mu|^{1-p/2}  (2^{2\mu} |\Omega_\mu|)^{p/2} 
\lc \sum_{\mu}2^{\mu p} |\Om_\mu| ,
\end{aligned}\Ee
by \eqref{basicL2}, and thus
\begin {equation} \label{Uest} 
\sum_\mu\sum_{W\in \fW^\mu} |W|\ga_{W,\mu}^p  =\|U\|_1\lc \|\fS f\|_p^p.
\end{equation}

For $\alpha>0$
let
\Be\label{Odef}
\cO_\alpha= \{x: M_{HL} U > \alpha^p\}
\Ee
and 
\Be\label{Otildedef}
\widetilde \cO_\alpha= \{x: M_{HL} \bbone_{\cO_\alpha}(x) > (10d)^{-d}\}
\Ee
so that $\cO_\alpha\subset \widetilde \cO_\alpha$ and 
\Be \label{measOa}
\meas(\widetilde\cO_\alpha) \lc 
\meas(\cO_\alpha) \lc 
\alpha^{-p}\|\fS f\|_p^p\,.
\Ee

Let $\fQ_\alpha=\{Q\}$ be the collection of Whitney cubes for the set $\widetilde \cO_\alpha$
(\cf. \cite[ch. VI]{stein-book})
so that
$$\diam (Q) \le \dist(Q, \widetilde \cO_\alpha^\complement) \le 4 \diam (Q).$$

In analogy to the usual terminology of ``good'' and ``bad" functions in Calder\'{o}n-Zygmund theory we split,  for fixed $\alpha$,  the collection ${\fW}^\mu$ into two subcollections $\Wgood\equiv \Wgood(\alpha) $ and
$\Wbad\equiv \Wbad(\alpha)$ by setting
\Be\label{Wgoodbad}  \begin{aligned}
\Wbad&
 \,= \,\big\{ W\in \fW^\mu :  \ga_{W,\mu}>\alpha\big\},
\\
\Wgood&\, =\,\,\big\{ W\in \fW^\mu :  \ga_{W,\mu}\le \alpha\big\}.
\end{aligned}\Ee

We relate the collection $\Wbad$ with the collection of Whitney cubes $\fQ_\alpha$ for the set $\widetilde \cO_\alpha$.

\begin{lemma} \label{fixedWQcontainment}
Let $W \in \Wbad$. Then $W \subset \cO_\alpha$. Moreover, there is a unique cube $Q=Q(W)\in \fQ_\alpha$ containing $W$.
\end{lemma}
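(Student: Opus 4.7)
For the first assertion $W\subset\cO_\alpha$, the plan is a direct averaging argument. By \eqref{Udef} we have the pointwise lower bound $U\ge \gamma_{W,\mu}^p\bbone_W$, since every remaining summand in the definition of $U$ is nonnegative. Hence, for any $x\in W$,
\[
M_{HL}U(x)\ \gtrsim\ \frac{1}{|W|}\int_W U\,dy\ \ge\ \gamma_{W,\mu}^p\ >\ \alpha^p,
\]
where the last inequality is just $W\in\Wbad$. Using the uncentered Hardy--Littlewood maximal operator (or, equivalently, a slightly enlarged centered ball) absorbs the dimensional constant in the first $\gtrsim$, so $x\in\cO_\alpha$.

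From the first assertion, $W\subset\cO_\alpha\subset\widetilde\cO_\alpha$, so $W$ is covered by the Whitney cubes of $\fQ_\alpha$. To obtain uniqueness, the plan is to show that $W$ is contained in a \emph{single} cube $Q\in\fQ_\alpha$, after which uniqueness follows from the pairwise disjointness of the Whitney cubes. The key geometric input is that the threshold $(10d)^{-d}$ in the definition \eqref{Otildedef} of $\widetilde\cO_\alpha$ has been engineered to make the buffer $\widetilde\cO_\alpha\setminus\cO_\alpha$ wider than $\diam(W)$:
\[
\dist(W,\widetilde\cO_\alpha^\complement)\ \ge\ C_d\,\diam(W)
\]
for a dimensional constant $C_d$ comfortably larger than $5$. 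To establish this, fix $y\in\widetilde\cO_\alpha^\complement$ and any cube $B$ containing both $y$ and $W$; the defining bound $M_{HL}\bbone_{\cO_\alpha}(y)\le (10d)^{-d}$ together with $W\subset\cO_\alpha$ forces
\[
|B|\ \ge\ \frac{|W|}{|B\cap\cO_\alpha|/|B|}\ \ge\ (10d)^d|W|,
\]
which translates to a lower bound on $\dist(y,W)$ proportional to $\diam(W)$ with the desired constant $C_d$.

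Finally, suppose $Q\in\fQ_\alpha$ meets $W$ and choose $x\in Q\cap W$. The Whitney property gives $\dist(x,\widetilde\cO_\alpha^\complement)\le 5\diam(Q)$, while the buffer estimate above yields $\dist(x,\widetilde\cO_\alpha^\complement)\ge C_d\diam(W)$. Combining, $\diam(Q)\ge (C_d/5)\diam(W)>\diam(W)$; since $Q$ and $W$ are both dyadic cubes and $Q$ is strictly larger while meeting $W$, the dyadic nesting forces $W\subset Q$, and uniqueness follows from the disjointness of the Whitney cubes. The only point requiring careful bookkeeping is the verification that the constant $C_d$ coming from the threshold $(10d)^{-d}$ in \eqref{Otildedef} indeed exceeds the Whitney constant, which is precisely why that specific numerical value was chosen.
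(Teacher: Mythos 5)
Your proof is correct and follows essentially the same route as the paper: the first assertion comes from averaging $U\ge \gamma_{W,\mu}^p\bbone_W$ over $W$ (the paper phrases this contrapositively via $U(x)\le\alpha^p$ off $\cO_\alpha$), and the second from observing that $W\subset\cO_\alpha$ forces a dilate of $W$ of the order $10d\,\ell(W)$ to lie in $\widetilde\cO_\alpha$, so the Whitney bound $\dist(x,\widetilde\cO_\alpha^\complement)\le 5\diam(Q)$ forces $\diam(Q)\ge\diam(W)$ and dyadic nesting plus disjointness finishes. The paper runs the same comparison through the center $x_W$ and the explicit $10\sqrt d$-dilate $W^*$, but this is only a cosmetic difference.
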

\begin{proof}
For the first statement, assume otherwise that there is $x \in W\cap \cO_\alpha^\complement$ for some $W \in \Wbad$. Then $U(x)\le \alpha^p$ and therefore $\ga_{W,\mu}^p\le\alpha^p$, which is a contradiction.

For the second statement, we first claim that $W^*\subset \widetilde \cO_\alpha$, where $W^*$ is the $10 d^{1/2}$-dilate of $W$ (with same center). The claim follows because for all $y\in W^*$ we have $M_{HL} \bbone_{\cO_\alpha}(y)\ge |W|/|W^*|= (10\sqrt d)^{-d}$ by the first statement.
Let $x_W$ be the center of $W$. Then by the claim
\[ 
\dist(x_W, (\widetilde \cO_\alpha)^\complement)\ge 
\dist(x_W, (W^*)^{\complement})= \frac{\diam(W^*)}{2\sqrt d}= 5\diam (W).
\]


Let $Q\in \fQ_\alpha$ such that $x_W\in Q$. Then  the last displayed inequality implies
\begin{align*}
5  \diam (W) \le \dist(x_W, (\widetilde \cO_\alpha)^\complement) 
\le 
\diam(Q)+ \dist(Q, (\widetilde \cO_\alpha)^\complement) 
\le 
5\diam(Q)
\end{align*}
and hence $\diam(Q)\ge  \diam(W)$. Since both $W$, $Q$ are dyadic cubes containing $x_W$ this implies $W\subset Q$. Uniqueness of $Q$ follows since the  cubes in $\fQ_\alpha$ have disjoint interior.
\end{proof}

In light of Lemma \ref{fixedWQcontainment}, we also set, for a dyadic cube $Q \in \fQ_\alpha$,
\Be\label{WmuQ}
\fW^\mu(Q)= \{ W\in \fW^\mu_{\text{bad}}:\, W\subset Q\}.
\Ee

\begin{lemma} \label{QaverageWsum} Let $Q\in \fQ_\alpha$.
Then
\[
\sum_\mu \sum_{\substack{W\in \fW^\mu(Q)}} |W|\ga^p_{W,\mu} \le 10^d \alpha^p|Q|.
\]
\end{lemma}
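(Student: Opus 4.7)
The plan is to realize that the sum on the left is an integral of the local version of $U$ over $Q$, and then to dominate this by a Hardy–Littlewood maximal average evaluated at a point just outside $\widetilde\cO_\alpha$, which by definition of $\cO_\alpha$ is at most $\alpha^p$.

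First I would observe that, by the definition of $U$ in \eqref{Udef} and of $\fW^\mu(Q)$ in \eqref{WmuQ}, we have
\[
\sum_\mu\!\sum_{W\in \fW^\mu(Q)}\!\!|W|\gamma^p_{W,\mu}
\;=\;\int_Q \sum_\mu\!\sum_{\substack{W\in \Wbad\\ W\subset Q}}\!\gamma^p_{W,\mu}\,\bbone_W(x)\,dx
\;\le\;\int_Q U(x)\,dx,
\]
since the cubes in $\fW^\mu(Q)$ form a subcollection of the family of cubes appearing in the definition of $U$. So it suffices to prove $\int_Q U\,dx\lesssim \alpha^p|Q|$ with the appropriate constant.

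Next I would exploit the Whitney property of $Q\in\fQ_\alpha$ to locate a point $y^*\notin \widetilde\cO_\alpha$ close to $Q$. Concretely, since $\dist(Q,\widetilde\cO_\alpha^\complement)\le 4\diam(Q)$, such a $y^*$ exists within distance $4\diam(Q)$ of $Q$. Because $\cO_\alpha\subset \widetilde\cO_\alpha$ (this follows immediately from \eqref{Otildedef} since $(10d)^{-d}<1$), we have $y^*\notin \cO_\alpha$, and hence by \eqref{Odef}
\[
M_{HL}U(y^*)\;\le\;\alpha^p.
\]

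Finally I would pick a cube (or ball, depending on the convention in the definition of $M_{HL}$) $Q^*$ centered at $y^*$ containing $Q$, with side length comparable to $\diam(Q)$. Every point of $Q$ lies within distance $\dist(y^*,Q)+\diam(Q)\le 5\diam(Q)$ of $y^*$, so one may take $|Q^*|\le 10^d|Q|$ (adjusting the constant according to whether $M_{HL}$ is defined through balls or cubes, which is why the stated constant is $10^d$). Then
\[
\int_Q U(x)\,dx\;\le\;\int_{Q^*} U(x)\,dx
\;\le\;|Q^*|\,M_{HL}U(y^*)
\;\le\;10^d\,|Q|\,\alpha^p,
\]
which gives the claim. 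There is no real obstacle here: the lemma is an instance of the general principle that a Whitney cube of a maximal-function level set carries controlled average of the function, and the proof is essentially a bookkeeping of constants once the correct point $y^*$ outside $\widetilde\cO_\alpha$ is selected.
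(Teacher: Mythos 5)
Your proof is correct and follows essentially the same route as the paper's: locate a point $\widetilde x\in\widetilde\cO_\alpha^\complement\subset\cO_\alpha^\complement$ within $4\diam(Q)$ of $Q$, enclose $Q$ in a cube $Q_*$ about $\widetilde x$ of measure $10^d|Q|$, and use $M_{HL}U(\widetilde x)\le\alpha^p$ to bound $\int_Q U\le\int_{Q_*}U\le 10^d\alpha^p|Q|$, which dominates the sum on the left.
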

\begin{proof}
Since $Q$ is a Whitney cube for the set $\widetilde \cO_\alpha$, there is $\widetilde x\in\widetilde \cO_\alpha^\complement\subset \cO_\alpha^\complement$ such that $\dist(\widetilde x, Q)\le 4\diam(Q)$. 
If $Q_*$ denotes  the cube centered at $\widetilde x$ with diameter equal to $10\diam (Q)$ then $Q\subset Q_*$.
Since $\widetilde x \in \cO_\alpha^\complement$ we have $M_{HL} U(\widetilde x) \le \alpha^p$.
Hence $\int_Q U\le \int_{Q_*} U\le \alpha^p|Q_*|= 10^d\alpha^p |Q|$ and the assertion follows.
\end{proof}

\subsection{\it Outline of the proof of the weak type inequalities}\label{outlinesect}
For $R\in\cR_k$ let 
\Be\label{eR} e_R(x) =\bbone_R (x)  \cL_k f(x)\Ee and as in 
\eqref{Wgoodbad} split $\cL_k f=g^k+b^k$, where
\begin{align}
g^k&=\sum_\mu\sum_{\substack{W\in \Wgood}}
\sum_{\substack{R\in \Rkmw}}e_R,
\\
b^k&= 
\sum_\mu\sum_{W\in \Wbad}
\sum_{\substack{R\in \Rkmw}}e_R.
\end{align}
\medskip

In view of \eqref{measOa} it suffices to  show, for $2\le q<\infty$,
\Be\label{goodfctest}
\meas\Big\{  x: \Big(\sum_k \Big(\int_I \Big|\sum_{j=1}^\infty 2^{-j\la(p)}T_j^k g^k(x,t)\Big|^q  dt\Big)^{2/q} \Big)^{1/2}>\alpha/2\Big\} 
\lc \alpha^{-p}\|\fS f\|_p^p
\Ee
and
\Be\label{badfctest}
\meas\Big\{  x\in \widetilde \cO^\complement_\alpha: \Big(\sum_k \Big(\int_I \Big|\sum_{j=1}^\infty 2^{-j\la(p)}T_j ^k b^k(x,t)\Big|^q  dt \Big)^{2/q}\Big)^{1/2}>\alpha/2\Big\} 
\lc \alpha^{-p}\|\fS f\|_p^p\,.
\Ee
Since $\lambda(p)\ge 1/p-1>-1/q$  we can use Lemma \ref{L2lemma} to bound 
\begin{align*}
&\Big\|
\Big(\sum_k \Big(\int_I \Big|\sum_{j=1}^\infty 2^{-j\la(p)}T_j^k g^k(x)\Big|^q  dt \Big)^{2/q}\Big)^{1/2}\Big\|_2
\\&\lc \sum_{j=1}^\infty 2^{-j\la(p)} \Big\|
\Big(\sum_k \Big(\int_I \big|T_j^k g^k(x)\big|^q  dt \Big)^{2/q}\Big)^{1/2}\Big\|_2
\\
&\lc 
 \sum_{j=1}^\infty 2^{-j(\la(p)+\frac 1q)} 
\Big(\sum_k \|g^k\|_2^2\Big)^{1/2}\lc 
\Big(\sum_k \|g^k\|_2^2\Big)^{1/2}.
\end{align*}
Hence, by 
 Tshebyshev's inequality,  the left hand side of \eqref{goodfctest} is bounded by
\[
4\alpha^{-2} \Big\|
\Big(\sum_k \Big(\int_I \Big|\sum_{j=1}^\infty 2^{-j\la(p)}T_j^k g^k(x)\Big|^q  dt \Big)^{2/q}\Big)^{1/2}\Big\|_2^2
\lc \alpha^{-2} \sum_k \|g^k\|_2^2\,.
\]
Now 
\begin{align*}
 &\sum_k \|g^k\|_2^2 =\sum_k \Big\|
 \sum_\mu\sum_{\substack{W\in \Wgood}}
\sum_{\substack{R\in \Rkmw}}e_R\Big\|_2^2
\le 
\sum_k 
 \sum_\mu\sum_{W\in \Wgood}
\sum_{\substack{R\in \Rkmw}}\|e_R\|_2^2
\\
&=  \sum_\mu
\sum_{\substack{W\in \Wgood}} |W| \gamma_{W,\mu}^2
\lc \alpha^{2-p} 
\sum_\mu
\sum_{\substack{W\in \fW^\mu}} |W| \gamma_{W,\mu}^p  \lc \alpha^{2-p} \|\fS f\|_p^p,
\end{align*}
where we have used $\gamma_{W,\mu} \leq \alpha$ for $W\in \Wgood$.  \eqref{goodfctest} follows.

We turn to 
\eqref{badfctest}.
We write $L(Q)=m$ if the side length of $Q$ is $2^m$. 
Define, for $m\ge -k$, 
\Be\label{Bkm}
B^k_m= \sum_{\substack {Q\in \fQ_\alpha\\ L(Q)=m}}
\sum_\mu\sum_{\substack{W\in \Wmq}}
\sum_{\substack{R\in\Rkmw}}e_R
\Ee
so that $b^k=\sum_{m\ge -k} B^k_m$.

Note that  for $R\in \cR_k^{\mu}(W)$, 
we have $L(W)\ge -k$.
Then $$b^k=\sum_{m\ge -k} B^k_m=\sum_{m\ge -k} \sum_{\sigma\ge 0} B^k_{m,\sigma},$$ where
\Be\label{Bkmka}
B^k_{m,\sigma}= \sum_{\substack {Q\in \fQ_\alpha\\ L(Q)=m}}
\sum_\mu\sum_{\substack{W\in \Wmq\\ L(W)=-k+\sigma}}
\sum_{\substack{R\in \Rkmw}}e_R.
\Ee

We  handle the case of the contributions $T^k_j B^k_{m,\sigma}$ with $m\le j-k$ differently from those with $m> j-k$. Moreover we distinguish the cases where  $|j-k-m|\ge \sigma$ 
and $|j-k-m|< \sigma$. 
If we use Tshebyshev's inequality and take into account \eqref{measOa} we see that in order to establish  \eqref{badfctest} it suffices to show the following three inequalities, assuming  $2\le q<p'$
(and hence $p<q'\le 2$):
\Be\label{Lraway}
\Big\|\Big(\sum_k \Big[\int_I \Big|\sum_{j=1}^\infty 2^{-j\la(p)}
\sum_{\substack {(m,\sigma):m\le j-k,\\0\le \sigma\le j-m-k}}
T_j ^k 
B^k_{m,\sigma}(\cdot,t)\Big|^q  dt \Big]^{q'/q}\Big)^{1/q'}\Big\|_{L^{q'}(\bbR^d)}^{q'}\lc \alpha^{q'-p} \|\fS f\|_p^p,\,
\Ee
\Be\label{Lpaway}
\Big\|\Big(\sum_k \Big[ \int_I \Big|\sum_{j=1}^\infty 2^{-j\la(p)}
\sum_{\substack {(m,\sigma):m> j-k,\\0\le \sigma\le m+k-j}}
T_j ^k B^k_{m,\sigma}(\cdot,t)\Big|^q  dt \Big]^{p/q}\Big)^{1/p}\Big\|_{L^p(\bbR^d\setminus \widetilde \cO_\alpha)}^p 
\lc \|\fS f\|_p^p,
\Ee
and
\Be\label{Lpatomic}
\Big\|\Big(\sum_k \Big[\int_I \Big|\sum_{j=1}^\infty 2^{-j\la(p)}
\sum_{\substack{m, \sigma:\\ \sigma>| m+k-j|}}T_j ^k B^k_{m,\sigma}(\cdot,t)\Big|^q  dt \Big]^{2/q}
\Big)^{1/2}
\Big\|_{L^p(\bbR^d\setminus \widetilde \cO_\alpha)}^p 
\lc \|\fS f\|_p^p.
\Ee
The proofs will be given in Sections
\ref{Lrawaysect}, \ref{Lpawaysect}, and \ref{Lpatomicsect}.
We shall handle the cases $p=1$, $2\le q<\infty$, and  $1<p<2$, $2\le q<p'$, in a unified way but will need an additional analytic families interpolation argument for $1<p<2$.

\subsection{\it Analytic families}
Fix $p$, $\alpha$ and consider for $0\le \Re(z)\le 1$ the family of functions
	\Be\label{bcomplex}
	b^{k,z}_{Q,\sigma}= 
\sum_\mu\sum_{\substack{W\in \Wmq\\L(W)=-k+\sigma}}\gamma_{W,\mu,z}
\sum_{\substack{R\in \Rkmw}}e_R,
\Ee
 where for $W\in \Wbad$
\[ \gamma_{W,\mu,z}=   
\gamma_{W,\mu}^{p(1-z/2)-1}
\]
and $Q$ belongs to $\fQ_\alpha$. Observe  that $b^{k,z}_{Q,\sigma}$ is supported in $Q$.
Notice that $z\mapsto  \gamma_{W,\mu,z}$ is an entire function for   $W\in \Wbad$.
We also set 
\Be \label{Bm-complex} 
B^{k,z}_{m,\sigma} = \sum_{\substack {Q\in \fQ_\alpha\\L(Q)=m}} b^{k,z}_{Q,\sigma} 
\Ee
and, for $0\le \Re(z)\le 1$, define $p_z$ and $\la(p_z)$ by 
\Be\label{laz} \frac{1}{p_z}=1-z+\frac z2, \quad \lambda(p_z)= \frac{d(1-z)-1}{2}.\Ee
If $1<p<2$ then we set  $\vth=2-2/p$ so that
\[ p_\vth=p, \quad \lambda(p_\vth)=d(\frac 1p-\frac 12)-\frac 12,
\quad B^{k,\vartheta}_{m,\sigma}= B^k_{m,\sigma}.
\]
For $\Re(z)=1$ we have
\begin{lemma}\label{L2estimatesBsigma}
For fixed $k$, $m\ge -k$ let $\cN_{k,m} \subset \bbZ$.
Then
\[ \sum_{k\in \bbZ}\sum_{m\ge -k} \Big\|\sum_{\sigma\in \cN_{k,m}} B^{k,z}_{m,\sigma}\Big\|_2^2\lc \|\fS f\|_p^p, \quad \Re(z)=1.
\]
\end{lemma}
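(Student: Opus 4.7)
The plan is to exploit the pairwise disjoint supports of the atoms $e_R=\bbone_R \cL_k f$ to reduce the squared $L^2$ norm to a single sum over $(\mu,W,R)$, and then recognize the result as (a quantity controlled by) $\|U\|_1$ from \eqref{Uestderivation}. The key structural observation is that each dyadic cube $R\in \cR_k$ belongs to a unique class $\cR_k^\mu$ and then to a unique bad Whitney cube $W=W(R)\in \fW^\mu$; by Lemma \ref{fixedWQcontainment} this $W$ lies in a unique cube $Q\in \fQ_\alpha$. Thus $\sigma=L(W)+k$ and $m=L(Q)$ are determined by $W$, so for fixed $k,m$ each $e_R$ appears in $\sum_{\sigma\in \cN_{k,m}} B^{k,z}_{m,\sigma}$ with at most one nonzero coefficient, namely $\gamma_{W(R),\mu,z}$. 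Combined with the disjointness of distinct $R\in \cR_k$, this gives the orthogonality identity
\[
\Big\|\sum_{\sigma\in \cN_{k,m}} B^{k,z}_{m,\sigma}\Big\|_2^2 = \sum_{\sigma\in\cN_{k,m}}\sum_{\substack{Q\in\fQ_\alpha\\ L(Q)=m}}\sum_\mu\sum_{\substack{W\in \fW^\mu(Q)\\ L(W)=-k+\sigma}} |\gamma_{W,\mu,z}|^2\sum_{R\in\cR_k^\mu(W)} \|e_R\|_2^2.
\]

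Next I would use that $|\gamma_{W,\mu,z}|^2=\gamma_{W,\mu}^{p-2}$ when $\Re(z)=1$. After summing in $k$ and $m$ and enlarging $\cN_{k,m}$ to all of $\bbZ$, the sum over the indices $(k,m,\sigma,Q,\mu,W,R)$ reindexes as a sum over $(\mu,W,k,R)$ with $W\in \Wbad$ and $R\in\cR_k^\mu(W)$:
\[
\sum_{k}\sum_{m\ge -k}\Big\|\sum_{\sigma\in \cN_{k,m}} B^{k,z}_{m,\sigma}\Big\|_2^2 \le \sum_\mu\sum_{W\in \Wbad} \gamma_{W,\mu}^{p-2}\sum_k\sum_{R\in \cR_k^\mu(W)} \|e_R\|_2^2.
\]
By the very definition of $\gamma_{W,\mu}$ the inner double-sum equals $|W|\gamma_{W,\mu}^2$; substituting and extending the outer sum from $\Wbad$ to all of $\fW^\mu$ yields $\sum_\mu\sum_{W\in \fW^\mu} |W|\gamma_{W,\mu}^p = \|U\|_1 \lc \|\fS f\|_p^p$ by \eqref{Uest}.

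There is no substantive obstacle here; the proof is essentially bookkeeping. The main conceptual point, which one may view as the design principle behind the analytic family \eqref{bcomplex}, is that the weight $|\gamma_{W,\mu,z}|^2$ at $\Re(z)=1$ is chosen so that when multiplied by the total $L^2$ energy $|W|\gamma_{W,\mu}^2$ of the atoms living in $W$, it produces exactly the quantity $|W|\gamma_{W,\mu}^p$ which the Calder\'on-Zygmund computation \eqref{Uestderivation} already shows to be summable with bound $\|\fS f\|_p^p$.
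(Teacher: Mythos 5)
Your proof is correct and follows essentially the same route as the paper: exploit the disjoint supports of the $e_R$ for fixed $k$ (together with the fact that each $R$ determines a unique $(\mu, W, Q)$, hence a unique $(\sigma,m)$) to reduce the squared norm to $\sum |\gamma_{W,\mu,z}|^2\|e_R\|_2^2$, use $|\gamma_{W,\mu,z}|^2=\gamma_{W,\mu}^{p-2}$ on $\Re(z)=1$, and recognize $\sum_\mu\sum_W|W|\gamma_{W,\mu}^p\lc\|\fS f\|_p^p$ from \eqref{Uest}. No gaps.
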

\begin{proof}
The left hand side  is equal to 
\begin{align*}
&\sum_k\sum_{m\ge -k} \Big\|\sum_{\substack {Q\in \fQ_\alpha\\ L(Q)=m}}
\sum_\mu\sum_{\sigma\in \cN_{k,m}} \sum_{\substack{W\in \Wmq\\ L(W)=-k+\sigma} }
\gamma_{W,\mu} ^{p(1-z/2)-1}
\sum_{\substack{R\in \cR^{\mu}_k(W)}}e_R\Big\|_2^2.
\end{align*}
Let for each $W$, $Q(W)$ be the unique cube in $\fQ_\alpha$  such that $W\subset Q$. We use that  for fixed $k$ the supports of the functions $e_R$, $R\in \cR_k$ have disjoint  interior and dominate for $\Re(z)=1$ the last display 
  by 
\begin{align*}
&
\sum_k\sum_{m\ge -k} \sum_{\substack {Q\in \fQ_\alpha\\ L(Q)=m}}
\sum_\mu\sum_{\substack{W\in \Wmq:\\L(W)+k\in \cN_{k,m}} }
\gamma_{W,\mu} ^{(\frac p2 -1)2}
\sum_{\substack{R\in \cR^\mu_k(W)}}\|e_R\|_2^2
\\
&\lc \sum_\mu\sum_{W\in \fW^\mu_{\text{bad}}} \gamma_{W,\mu}^{p-2}
\sum_{\substack{k:L(W)+k\in\\ \cN_{k, L(Q(W))} }}\sum_{R\in \cR^\mu_k(W)} \|e_R\|_2^2
\le \sum_{\mu}\sum_{W\in \fW^{\mu}} \gamma_{W,\mu}^p|W|
\lc \|\fS f\|_p^p.\qedhere
\end{align*}
\end{proof}

For $\Re(z)=0$ we have
\begin{lemma}\label{Qaverage}
There exists a universal constant $C$ 
dependent only on the dimension such that  for every $Q\in \fQ_\alpha$ and every 
$\cN\subset \bbN\cup \{0\}$ 
\[
 \int\Big(\sum_k\Big |\sum_{\sigma\in \cN} b_{Q,\sigma}^{k,z}(x)\Big|^2\Big)^{1/2} dx \le C \alpha^p|Q|, \,\, \text{ if }\Re(z)=0.
\]
\end{lemma}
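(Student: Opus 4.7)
The plan is to get a pointwise expression for $\sum_{\sigma \in \cN} b^{k,z}_{Q,\sigma}(x)$ at fixed $k$, then reduce the $L^1$ norm of the $\ell^2(k)$-sum to quantities controlled by Lemma~\ref{QaverageWsum}. The key structural observation is that for fixed $k$, the cubes in $\cR_k$ are pairwise disjoint, and every $R \in \cR_k$ belongs to exactly one class $\cR_k^\mu$ and, through that, to exactly one Whitney cube $W \in \fW^\mu$. Consequently, for each $x$ and each $k$, at most one triple $(\mu, W, R)$ with $W \in \Wmq$, $L(W)+k \in \cN$, and $R \in \Rkmw$ produces a nonzero summand in $\sum_\sigma b^{k,z}_{Q,\sigma}(x)$. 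Since $|\gamma_{W,\mu,z}| = \gamma_{W,\mu}^{p-1}$ on $\Re(z)=0$, this yields
\[
\sum_k \Big|\sum_{\sigma\in\cN} b^{k,z}_{Q,\sigma}(x)\Big|^2 \;\le\; \sum_\mu \sum_{W\in \fW^\mu(Q)} \gamma_{W,\mu}^{2(p-1)} A_W(x),
\]
where $A_W(x) = \sum_{k:L(W)+k\in \cN} \sum_{R\in \Rkmw} |\cL_k f(x)|^2 \mathbbm{1}_R(x)$ is supported in $W$ and satisfies $\int A_W \le |W|\gamma_{W,\mu}^2$ by the definition of $\gamma_{W,\mu}$.

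Next I would take the square root and apply Minkowski's inequality in $\mu$. The payoff is that within a fixed $\mu$ the Whitney cubes $W \in \fW^\mu$ are pairwise disjoint, so
\[
\Big(\sum_{W \in \fW^\mu(Q)} \gamma_{W,\mu}^{2(p-1)} A_W(x)\Big)^{1/2} = \sum_{W \in \fW^\mu(Q)} \gamma_{W,\mu}^{p-1} A_W(x)^{1/2}\mathbbm{1}_W(x).
\]
(The cubes for different $\mu$'s may overlap, which is exactly why Minkowski in $\mu$ precedes pulling the square root inside.) Integrating and applying Cauchy-Schwarz on each $W$,
\[
\int_W A_W(x)^{1/2}\, dx \;\le\; |W|^{1/2}\Big(\int_W A_W\Big)^{1/2} \;\le\; |W|\,\gamma_{W,\mu}.
\]

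Combining these two estimates gives
\[
\int\Big(\sum_k \Big|\sum_{\sigma\in\cN} b^{k,z}_{Q,\sigma}(x)\Big|^2\Big)^{1/2} dx \;\le\; \sum_\mu \sum_{W\in \fW^\mu(Q)} \gamma_{W,\mu}^p\,|W|,
\]
and the right side is bounded by $10^d \alpha^p |Q|$ by Lemma~\ref{QaverageWsum}. The main delicate point is the order in which Minkowski and square roots are used: one must first extract the sum over $\mu$ before collapsing the $W$-sum, because only then does the disjointness of the Whitney cubes within a single level set turn the weighted $\ell^2$ sum over $W$ into a genuine sum of pointwise contributions. Everything else is a clean application of Cauchy-Schwarz and the already-established average estimate for $\gamma_{W,\mu}^p$ over $Q$.
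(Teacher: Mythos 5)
Your proof is correct and follows essentially the same route as the paper: decouple the sum over $\mu$ and $W$ from the $\ell^2(k)$ norm, apply Cauchy--Schwarz on each Whitney cube to get $\int_W A_W^{1/2}\le |W|\gamma_{W,\mu}$, and conclude with Lemma \ref{QaverageWsum}. The only cosmetic difference is that the paper achieves the decoupling by Minkowski's inequality in $L^1(\ell^2)$ (working on the doubles $W_*$), whereas you obtain it from the pointwise disjointness of the cubes $R\in\cR_k$ and of the $W\in\fW^\mu$ at fixed $\mu$ — which in effect shows the Minkowski step is essentially an equality here.
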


\begin{proof}
For each $W\in \fW^\mu$ let $W_*$ be its double.
By Minkowski's inequality the left hand side is dominated by
\begin{align*} 
&\sum_\mu\sum_{\substack {W\in \fW^\mu\\W\subset Q}}\ga_{W,\mu}^{p-1}
\int\Big(\sum_{\substack {k:\\k+L(W)\in \cN}}\Big |\sum_{\substack{R\in \cR^{\mu}_k(W)}}e_R(x) \Big|^2\Big)^{1/2} dx 
\\
&\lc 
\sum_\mu\sum_{\substack {W\in \fW^\mu\\W\subset Q}}\ga_{W,\mu}^{p-1}
\int_{W_*}\Big(\sum_{k}\sum_{\substack{R\in \cR^{\mu}_k(W)}}|e_R(x) |^2\Big)^{1/2} dx
\end{align*}
which by the Cauchy-Schwarz inequality  can be estimated by 
\begin{align*} 
&\sum_\mu\sum_{\substack {W\in \fW^\mu\\W\subset Q}}\ga_{W,\mu}^{p-1}\Big(\sum_{k}\sum_{\substack{R\in \Rkmw}}\|e_R \|^2\Big)^{1/2} |W_*|^{1/2}
\lc 
\sum_\mu\sum_{\substack {W\in \fW^\mu\\W\subset Q}}|W^*|\ga_{W,\mu}^{p}
\lc \alpha^p|Q|.
\end{align*}
Here we have used Lemma \ref{QaverageWsum}.
\end{proof}

\subsection{\it Proof of  \eqref{Lraway}}\label{Lrawaysect}
Let  $1\le p<2$ and $2\le q<p'$. The asserted inequality follows from
\begin{subequations}
\begin{multline}\label{Lqprime-away-1}
\Big\|\Big(\sum_k\Big( \int_I \Big|\sum_{j\ge 2s} \sum_{0\le \sigma\le s}
2^{-j\la(p)}T_j ^k 
B^k_{j-k-s,\sigma}(\cdot,t)\Big|^q  dt \Big)^{q'/q}\Big)^{1/q'}\Big\|_{q'}\\
\lc (1+s)^{1-\frac 2q}
2^{-s(d-1)(\frac 1p-\frac 1{q'})}\alpha^{p(\frac 1p-\frac{1}{q'})} \|\fS f\|_p^{p/q'}\,
\end{multline}
and, 
\begin{multline}\label{Lqprime-away-2}
\Big\|\Big(\sum_k \Big(\int_I \Big| \sum_{0\le \sigma\le s}2^{-j\la(p)}T_j ^k 
B^k_{j-k-s,\sigma}(\cdot,t)\Big|^q  dt \Big)^{q'/q}\Big)^{1/q'}\Big\|_{q'}\\ \lc (1+j)^{1-\frac 2q}
2^{-j\frac{d-1}{2}(\frac 1p-\frac 1{q'})}\alpha^{p(\frac 1p-\frac{1}{q'})} \|\fS f\|_p^{p/q'},\quad \frac j2\le s\le j\,.
\end{multline}
\end{subequations}

If in addition $p>1$ we use a complex interpolation argument, embedding $B^k_{m,\sigma}$ in an analytic family of functions, see \eqref{Bm-complex}.

Define $r$ by \Be \label{rdefinition}
 \frac 1r= \Big(\frac 1p-\frac 1q\Big)\Big/ \Big(\frac 2p-1\Big),
\Ee  so that $1<r\le 2$ and for $\vth=2-2/p$ we have
$(1-\vth)(1,\frac 1r)+\vth (\frac 12, \frac 12)=(\frac 1p,\frac 1{q'}).$
Then by complex interpolation (i.e. the three lines lemma and duality) we deduce
 \eqref{Lqprime-away-1}, \eqref{Lqprime-away-2} 
  from
\begin{subequations}
\Be\label{L2away-sv1-z=1}
\Big\|\Big(\sum_k \int_I \Big|\sum_{j\ge 2s} \sum_{0\le \sigma\le s}
2^{-j\la(p_z)}T_j ^k 
B^{k,z}_{j-k-s,\sigma}(\cdot,t)\Big|^{2}  dt \Big)^{1/2}\Big\|_{2}\lc 
\|\fS f\|_p^{p/2},\, \\ \Re(z)=1,
\Ee
\Be\label{L2away-sv2-z=1}
\Big\|\Big(\sum_k \int_I \Big|  \sum_{0\le \sigma\le s}2^{-j\la(p_z)}T_j ^k 
B^{k, z}_{j-k-s,\sigma}(\cdot,t)\Big|^{2}  dt \Big)^{1/2}\Big\|_{2}\lc 
 \|\fS f\|_p^{p/2},\,\,\, \frac j2\le s\le j,\,\,\, \Re(z)=1.
\Ee
\end{subequations}
and
\begin{subequations}
\begin{multline}\label{Lqprime-away-1-z=0}
\Big\|\Big(\sum_k \int_I \Big|\sum_{j\ge 2s}  \sum_{0\le \sigma\le s}2^{-j\la(p_z)}T_j ^k 
B^{k,z}_{j-k-s,\sigma}(\cdot,t)\Big|^{r'}  dt \Big)^{1/r'}\Big\|_{r}\\ \lc (1+s)^{\frac 2r -1}
2^{-s(d-1)(1-\frac 1r)}\alpha^{p(1-\frac{1}{r})} \|\fS f\|_p^{p/r},\, \,\Re(z)=0,
\end{multline}
\begin{multline}\label{Lqprime-away-2-z=0}
\Big\|\Big(\sum_k \int_I \Big| \sum_{0\le\sigma\le s} 2^{-j\la(p_z)}T_j ^k 
B^{k, z}_{j-k-s,\sigma}(\cdot,t)\Big|^{r'}  dt \Big)^{1/r'}\Big\|_{r}\\
\lc (1+j)^{\frac 2r -1}
2^{-j\frac{d-1}{2}(1-\frac 1r)}\alpha^{p(1-\frac{1}{r})} \|\fS f\|_p^{p/r}, \quad \frac j2\le s\le j,\,\, \,\Re(z)=0.
\end{multline}
\end{subequations}
We note that for the special case $p=1$  inequalities
\eqref{Lqprime-away-1-z=0},
\eqref{Lqprime-away-2-z=0} with $r=q'$ and $z=0$ imply inequalities \eqref{Lqprime-away-1}, \eqref{Lqprime-away-2} with $p=1$.

The proof of 
\eqref{L2away-sv1-z=1}, 
\eqref{L2away-sv2-z=1}
 is straightforward, using orthogonality, i.e. 
the fact that for each $k$,  $t$, $\xi$ there are at most five $j$ for which  $\vphi_j(\rho(2^{-k}t^{-1}\xi))\neq 0$. Therefore 
we get for $\Re (z)=1$ (and $\Re(\la(p_z))=-1/2$)
\begin{align*}
&\Big\|\Big(\sum_k \int_I \Big|\sum_{j\ge 2s} \sum_{0\le \sigma\le s}2^{-j\la(p_z)}T_j ^k 
B^{k,z}_{j-k-s,\sigma}(\cdot,t)\Big|^2  dt \Big)^{1/2}\Big\|_2^2
\\&\lc
\sum_k\int_I \sum_{j\ge 2s} 2^j \int 
|\phi_j(\rho(2^{-k}t^{-1}\xi)|^2 \Big|\sum_{0\le\sigma\le s} \widehat {B^{k,z}_{j-k-s,\sigma}}(\xi)\Big|^2d\xi\,dt 
\\
&\lc \sum_{k\in \bbZ}\sum_{j\ge 2s} \Big\|\sum_{0\le\sigma\le s}
 B^{k,z}_{j-k-s,\sigma}\Big\|_2^2
 = \sum_k\sum_{m\ge -k+s} 
 \Big\|\sum_{0\le\sigma\le s}
 B^{k,z}_{m,\sigma}\Big\|_2^2
  \lc \|\fS f\|_p^p,
\end{align*}
by Lemma \ref{L2estimatesBsigma}. Similarly, for fixed $j$
\[
\Big\|\Big(\sum_k \int_I  \Big| \sum_{0\le\sigma\le s} 2^{-j\la(p_z)}T_j ^k 
B^{k, z}_{j-k-s,\sigma}(\cdot,t)\big|^2  dt \Big)^{1/2}\Big\|_2^2
\lc  
\sum_k \Big\|\sum_{0\le\sigma\le s}B^{k,z}_{j-k-s,\sigma}\Big\|_2^2\lc\|\fS f\|_p^p.\]
This concludes the proof of \eqref{L2away-sv1-z=1} and  \eqref{L2away-sv2-z=1}.

We now come to the main part of the proof, namely the inequalities 
\eqref{Lqprime-away-1-z=0},  \eqref{Lqprime-away-2-z=0} 
when  $1\le p<2$ and $\Re(z)=0$.
We fix $z$ with $\Re(z)=0$ and then use another interpolation inequality based on 
$$
\text{$[L^1(\ell^1(L^\infty)), L^2(\ell^2(L^2))]_\theta=L^r(\ell^r(L^{r'}))$ for $\theta= 2-\frac 2r$}, $$
where  Calder\'on's complex interpolation method  is applied to  vector-valued $L^p$ spaces 
(see \cite[Theorem 5.1.2]{bergh-lofstrom}.  
As a consequence we  
have
$$\| \cdot\|_{L^r(\ell^r(L^{r'}))} \lc 
\| \cdot \|_{L^1(\ell^1(L^{\infty}))}^{\frac 2r-1} 
\| \cdot \|_{L^2(\ell^2(L^{2}))}^{2-\frac 2r} .
$$
Assuming 
$1\le p<2$, \eqref{Lqprime-away-1-z=0},  \eqref{Lqprime-away-2-z=0} follow from
\begin{subequations}
\begin{multline}\label{L2-away-1-z=0}
\Big\|\Big(\sum_k \int_I \Big|\sum_{j\ge 2s}
\sum_{0\le\sigma\le s}
 2^{-j\la(p_z)}T_j ^k 
B^{k,z}_{j-k-s,\sigma}(\cdot,t)\Big|^{2}  dt \Big)^{1/2}\Big\|_{2}\\ \lc 
2^{-s\frac{d-1}{2}}\alpha^{p/2} \|\fS f\|_p^{p/2},\, \,\,\Re(z)=0,
\end{multline}
\begin{multline}\label{L2-away-2-z=0}
\Big\|\Big(\sum_k \int_I \Big| \sum_{0\le\sigma\le s}2^{-j\la(p_z)}T_j ^k 
B^{k, z}_{j-k-s,\sigma}(\cdot,t)\big|^{2}  dt \Big)^{1/2}\Big\|_{2}\\ \lc 
2^{-j\frac{d-1}{4}}\alpha^{p/2} \|\fS f\|_p^{p/2}, \quad \frac j2\le s\le j,\,\, \Re(z)=0,
\end{multline}
\end{subequations}
and 
\begin{subequations}
\Be\label{L1-away-1-z=0}
\Big\|\sum_k \sup_{t\in I} \Big|\sum_{j\ge 2s} \sum_{0\le\sigma\le s}2^{-j\la(p_z)}T_j ^k 
B^{k,z}_{j-k-s,\sigma}(\cdot,t)\Big|  \Big\|_{1}\lc (1+s)
\|\fS f\|_p^{p},\, \,\, \Re(z)=0,
\Ee
\Be\label{L1-away-2-z=0}
\Big\|\sum_k \sup_{t\in I} \Big| \sum_{0\le\sigma\le s} 2^{-j\la(p_z)}T_j ^k 
B^{k, z}_{j-k-s,\sigma}(\cdot,t)\Big|  \Big\|_{1}\lc  (1+j)
\|\fS f\|_p^{p},\quad \frac j2\le s\le j,\,\, \Re(z)=0.
\Ee
\end{subequations}

This proof of \eqref{L2-away-1-z=0}, \eqref{L2-away-2-z=0} 
is inspired by the work of Christ and Sogge \cite{christ-sogge}, \cite{christ-sogge-survey}.
We use the decomposition \eqref{decompositions} and orthogonality, first in the  $j$-sum  and then, for each $j$, also in the $\nu$ sums, where $\nu\in \cZ_j$. We then see that
\begin{align}
&\Big\|\Big(\sum_k \int_I \Big|\sum_{j\ge 2s}
\sum_{0\le\sigma\le s}
 2^{-j\la(p_z)}T_j ^k 
B^{k,z}_{j-k-s,\sigma}(\cdot,t)
\Big|^{2}  dt \Big)^{1/2}\Big\|_{2}^2
\notag
\\
&\lc 
\sum_k\sum_{j\ge 2s} \sum_{\nu\in \cZ_j} 2^{-j (d-1)}\int_I\Big\|\sum_{0\le \sigma\le s}T_{j,\nu} ^k 
B^{k,z}_{j-k-s,\sigma}(\cdot,t)\Big\|_2^2 \, dt
\notag
\\&=
\sum_m\sum_{k\ge 2s-m}
\sum_{\nu\in \cZ_{k+m}} 2^{-(k+m) (d-1)}\int_I\Big\|\sum_{0\le \sigma\le s}T_{k+m,\nu} ^k 
B^{k,z}_{m-s,\sigma}(\cdot,t)\Big\|_2^2 \, dt.
\label{afterorthogonality}
\end{align}

We use $$\int_I\|T^k_{j,\nu} g\|_2^2dt= \int_I\iint (2^k t)^dh_{j,\nu}(2^kt(x-y))g(y) \overline{g(x)} dy\, dx\, dt,$$
where  $h_{j,\nu}(x)= \cF^{-1}[|\chi_{j,\nu}\varphi_j(\rho(\cdot))|^2](x)$. The kernel
$h_{j,\nu}$  satisfies kernel estimates which are analogous to the right hand side of 
\eqref{Kjnukernelestimate}, i.e.
\[\sup_{t\in I} |t^dh_{j,\nu}(tx)|\lc_N  \frac{2^{-j\frac{d+1}{2}}}
{(1+2^{-j} |x|)^{N} (1+2^{-j/2}|P_{j,\nu}(x)|)^N}.
\]

Using $j=k+m$ we can then estimate, for $t\in I$
\begin{align*} &2^{-(k+m)(d-1)}  \Big\|\sum_{0\le \sigma\le s}T_{k+m,\nu} ^k 
B^{k,z}_{m-s,\sigma}(\cdot,t)\Big\|_2^2 
\,\le C_N \times \\&
\iint 2^{-(k+m)\frac{d-1}{2}} \frac{2^{-md}}{(1+2^{-m}|x-y|)^N} \frac{1}{(1+ 2^{-m+ \frac{k+m}{2}}
| P_{k+m,\nu}(x-y)|)^N}  
|\beta^{k,z}_{m,s}(y)|\,dy\,| \beta^{k,z}_{m,s}(x)|
\, dx,
\end{align*}
with \Be\label{betasdef} \beta^{k,z}_{m,s}:= \sum_{0\le\sigma\le s} B^{k,z}_{m-s,\sigma}.\Ee

Consider a maximal set $Z^s$ of $c2^{-s}$ separated unit vectors $\eta_\varsigma$, and let
$P^s_\varsigma$ be the orthogonal projection to the orthogonal complement of $\nabla\rho(\eta_\varsigma)$.
Notice that for each $\varsigma$ there are $\approx 2^{(d-1) (\frac j2-s)}$ of the vectors $\xi_\nu$ with $\nu\in \cZ_j$ which are of distance $\le C 2^{-s}$ to $\eta_{\varsigma}$.
For those $\nu$ we then have 
$|\frac{\nabla \rho(\xi_\nu)}{|\nabla\rho(\xi_\nu)|}-
\frac{\nabla \rho(\eta_\varsigma)}{|\nabla\rho(\eta_\varsigma)|}|= O(2^{-s})$. Consequently, for those $\nu$, and $j=k+m\ge 2s$
\begin{multline*}
\frac{2^{-md}}{(1+2^{-m}|x-y|)^N} \frac{1}{(1+ 2^{-m+ \frac{k+m}{2}}
| P_{k+m,\nu}(x-y)|)^N}  \\
\lc_N 
\frac{2^{-md}}{(1+2^{-m}|x-y|)^N} \frac{1}{(1+ 2^{-m+ s}
| P^s_{\varsigma}(x-y)|)^N}  
\end{multline*}
and there are $O(2^{(d-1)(\frac{k+m}{2}-s)})$ indices $\nu\in \cZ_{k+m}$ for which we may use this inequality.
Then, setting 
\Be\label{Akmsigma}
A_{k,m,\varsigma}(x) = 
\int 2^{-s(d-1)} \frac{2^{-md}}{(1+2^{-m}|x-y|)^N} \frac{1}{(1+ 2^{-m+ s}
| P^s_{\varsigma}(x-y)|)^N}  
|\beta^{k,z}_{m,s}(y)|\,dy\,, 
\Ee
we get by  the above considerations 
\begin{align}\notag
\eqref{afterorthogonality}& \lc  \sum_{\varsigma\in Z^s}\sum_m\sum_{k\ge 2s-m} 
\int A_{k,m,\varsigma}(x)  |\beta^{k,z}_{m,s}(x)| dx
\\ \label{T*Targ} 
&
\lc \sum_{\varsigma\in Z^s}\sum_m
\int\Big( \sum_{k\ge 2s-m} [A_{k,m,\varsigma}(x) ]^2\Big)^{1/2} \Big(\sum_k |\beta^{k,z}_{m,s}(x)|^2\Big)^{1/2}\,dx.
\end{align}

We first establish  that
\Be\label{Akmsigclaim}
\sup_{m}\sum_{\varsigma\in Z^s}
\Big\|\Big(\sum_{k\ge 2s-m}|A_{k,m,\varsigma}|^2\Big)^{1/2}
\Big\|_\infty \lc  \alpha^p 
2^{-s(d-1)}.
\Ee
For each dyadic cube $Q$  let $y_Q$ be the center of $Q$.
Using \eqref{Akmsigma} we  
estimate for  fixed $x\in \bbR^d$
\begin{multline*}
\Big(\sum_{k\ge 2s-m}|A_{k,m,\varsigma}(x)|^2\Big)^{1/2}\,
\lc \,
2^{-s(d-1)} \times \\\sum_{\substack {Q:\\ L(Q)=m-s}}
\frac{2^{-md}}{(1+2^{-m}|x-y_Q|)^N} \frac{1}{(1+ 2^{-m+ s}
| P^s_{\varsigma}(x-y_Q)|)^N}  \int \Big(\sum_k \Big|\sum_{0\le \sigma\le s} b^{k,z}_{Q,\sigma}(y)
\Big|^2\Big)^{1/2} dy
\end{multline*}
and using
 Lemma \ref{Qaverage} we bound this expression by 
 \begin{align*} & 2^{-s(d-1)} \sum_{\substack {Q:\\ L(Q)=m-s}}
\frac{2^{-md}}{(1+2^{-m}|x-y_Q|)^N} \frac{1}{(1+ 2^{-m+ s}
| P^s_{\varsigma}(x-y_Q)|)^N} \alpha^p|Q| 
\\
&\lc \alpha^p 2^{-s(d-1)} \int \frac{2^{-md}}{(1+2^{-m}|x-w|)^N} \frac{1}{(1+ 2^{-m+ s}
| P^s_{\varsigma}(x-w)|)^N} dw
\lc \alpha^p 2^{-2s(d-1)}.
\end{align*}
We sum over $\varsigma\in Z^s$ and use that $\#Z^s=O(2^{s(d-1)})$  to obtain  \eqref{Akmsigclaim}.

Combining  \eqref{Akmsigclaim} 
and \eqref{T*Targ}  we obtain 
\[\eqref{afterorthogonality}\lc  2^{-s(d-1)} \alpha^p \sum_m
\sum_{\substack {Q\in \fQ_\alpha:\\ L(Q)=m-s}}
\Big\|\Big(\sum_k\Big|\sum_{0\le \sigma\le s} b^{k,z}_{Q,\sigma}\Big|^2\Big)^{1/2}\Big\|_1
.\]
Finally, by Lemma \ref{Qaverage} again
\begin{align*}
 \sum_m
\sum_{\substack {Q\in \fQ_\alpha:\\ L(Q)=m-s}}
\Big\|\Big(\sum_k\Big|\sum_{0\le \sigma\le s} b^{k,z}_{Q,\sigma}\Big|^2\Big)^{1/2}\Big\|_1
\lc \sum_{Q\in \fQ_\alpha} |Q|\alpha^p
\lc \alpha^p|\widetilde \cO_\alpha| \lc \|\fS f\|_p^p,
\end{align*}
by \eqref{measOa}. This finishes the proof of 
\eqref{L2-away-1-z=0}.

The proof of 
 \eqref{L2-away-2-z=0} 
 uses the same idea.
We estimate for fixed $j\in [s/2,s]$, $\Re(z)=0$,
\begin{align}\label{squareofL2-away-2-z=0} 
&\Big\|\Big(\sum_k \int_I \Big| \sum_{0\le\sigma\le s}2^{-j\la(p_z)}T_j ^k 
B^{k, z}_{j-k-s,\sigma}(\cdot,t)\big|^{2}  dt \Big)^{1/2}\Big\|_{2}^2\\ 
&\lc 2^{-j(d-1)} \sum_k \int_I \big\| T^k_j\beta^{k,z}_{j-k,s}(\cdot,t)\big\|_2^2
\lc 2^{-j(d-1)} \sum_{\nu\in \cZ_j} \sum_k \int_I \big\| T^k_{j,\nu}\beta^{k,z}_{j-k,s}(\cdot,t)\big\|_2^2\, dt \notag
\\
&\lc 2^{-j(d-1)} \sum_{\nu\in \cZ_j} \sum_k \int \cA_{k,j,\nu}(x)
\big|\beta^{k,z}_{j-k,s}(x)\big|\, dx\notag,
\end{align}
where again $\beta^{k,z}_{m,s}$ is as in \eqref{betasdef} and
\[\cA_{k,j,\nu}(x):= \int
\frac{2^{kd} 2^{-j\frac{d+1}{2}}}
{(1+2^{k-j}|x-y|)^N (1+ 2^{k-\frac j2}|P_{j,\nu}(x-y)|)^N}|\beta^{k,z}_{j-k,s}(y)|\, dy.
\]
Now $\cA_{k,j,\nu}(x)\lc $
\[
\int
\frac{2^{kd} 2^{-j\frac{d+1}{2}}}
{(1+2^{k-j}|x-w|)^N (1+ 2^{k-\frac j2}|P_{j,\nu}(x-w)|)^N} dw 
\sup_{\substack{Q\in \fQ_\alpha\\L(Q)=j-k-s} }\frac{1}{|Q|}\int\Big|
\sum_{0\le \sigma\le s}
b^{k,z}_{Q,\sigma} (y)\Big| dy
\]
which is bounded by $C\alpha^p$. Consequently
\begin{align*}
&\eqref{squareofL2-away-2-z=0} \lc 2^{-j(d-1)}\sum_{\nu\in \cZ_j} \alpha^p 
\sum_k \sum_{\substack {Q\in \fQ_\alpha\\ L(Q)=j-k-s}} 
\Big\|\sum_{0\le\sigma\le s} b^{k,z}_{Q,\sigma} \Big\|_1
\\
&\lc 2^{-j\frac{d-1}2} \alpha^p\sum_{Q\in \fQ_\alpha}  \Big\|\sum_{0\le \sigma\le s}
b^{j-s-L(Q),z}_{Q,\sigma}
\Big\|_1
\\
&\lc 2^{-j\frac{d-1}2} \alpha^p\sum_{Q\in \fQ_\alpha}  \Big\|\Big(\sum_k\Big|\sum_{0\le \sigma\le s}
b^{k,z}_{Q,\sigma}
\Big|^2\Big)^{1/2}\Big\|_1
\lc 2^{-j\frac{d-1}2} \alpha^p\|\fS f\|_p^p,
\end{align*}
by Lemma \ref{Qaverage}.

We now turn to the proof of \eqref{L1-away-1-z=0},
 \eqref{L1-away-2-z=0}, here  still $\Re(z)=0$.
We estimate the left hand side of 
\eqref{L1-away-1-z=0} using Lemma \ref{Tnklemma} by
\[\sum_k\sum_{j\ge 2s} 2^{-j\frac{d-1}{2}}\sum_{0\le\sigma\le s}
\big\|\sup_{t\in I} \big|
T_j ^k 
B^{k,z}_{j-k-s,\sigma}(\cdot,t)\big|  \big\|_{1} \lc 
\sum_k\sum_{j\ge 2s} \sum_{0\le\sigma\le s}
\big\|
B^{k,z}_{j-k-s,\sigma} \big\|_{1} 
\] and the right hand side of the last display is dominated by 
\begin{align*}
& \sum_{0\le\sigma\le s}\sum_k \sum_{j\ge 2s} \sum_\mu \sum_{\substack{Q\in \fQ_\alpha :\\L(Q)=j-k-s}}
 \sum_{\substack{W\in\Wmq\\ L(W)=-k+\sigma}} \ga_{W,\mu}^{p-1} \Big\|\sum_{\substack{R\in \Rkmw}}  e_R\Big\|_1
 \\&\lc 
 \sum_{0\le\sigma\le s}\sum_k \sum_{j\ge 2s} \sum_\mu 
 \sum_{\substack{Q\in \fQ_\alpha :\\L(Q)=j-k-s}}
  \sum_{\substack{W\in\Wmq\\ L(W)=-k+\sigma}} 
   \ga_{W,\mu}^{p-1}|W|^{1/2}\Big(\sum_{\substack{R\in \Rkmw}} 
 \|e_R\|_2^2\Big)^{1/2}  
\\
   &\lc \sum_{0\le\sigma\le s} \sum_{\mu}\sum_{W\in \fW^\mu}
 \ga_{W,\mu}^p |W|\lc (1+s) \|\fS f\|_p^p. \
\end{align*}

The left hand side of 
\eqref{L1-away-2-z=0}
 is estimated for fixed $j\in [s, 2s]$ by 
\begin{align*} 2^{-j\frac{d-1}{2}}\sum_k \sum_{0\le\sigma\le s} \big\|\sup_{t\in I}
 \Big | T^k_j 
B^{k, z}_{j-k-s,\sigma}
(\cdot,t)\big|  \Big\|_{1}\lc \sum_{0\le \sigma\le s}  \sum_k \|B^{k, z}_{j-k-s,\sigma}\|_1
\end{align*}
and the subsequent estimation is as for \eqref{L1-away-1-z=0}. This concludes the proof of \eqref{Lraway}. \qed

\subsection{\it Proof of \eqref{Lpaway}}\label{Lpawaysect}
It suffices to show, assuming $1\le p<2$, $q=p'$  that for some $a(p,q)>0$ and $s\ge 0$ 
\[ \Big\|\Big(\sum_k \Big[ \int_I \Big|\sum_{j=1}^\infty 2^{-j\la(p)}
\sum_{0\le\sigma\le s}
T_j ^k B^k_{j-k+s,\sigma}(\cdot,t)\Big|^q  dt \Big]^{p/q}\Big)^{1/p}\Big\|_{L^p(\bbR^d\setminus \widetilde \cO_\alpha)}
\lc 2^{-a(p,q)s}
\|\fS f\|_p. \]
When $p>1$ we use the analytic family of functions in \eqref{Bm-complex}.
It suffices to prove the inequalities
\Be\label{L2awayexcept}
\Big\|\Big(\sum_k \int_I  \Big|\sum_{j=1}^\infty 2^{-j\la(p_z)}
\sum_{0\le \sigma\le s}
T_j ^k B^{k,z}_{j-k+s,\sigma}(\cdot,t)\Big|^2 dt\Big)^{1/2} \Big\|_{L^2(\bbR^d\setminus \widetilde \cO_\alpha)}
\lc 
\|\fS f\|_p^{p/2},\quad \Re(z)=1,
\Ee
and
\Be\label{L1awayexcept}
\Big\|\sum_k \sup_{t\in I} \Big|\sum_{j=1}^\infty 2^{-j\la(p_z)}
\sum_{0\le \sigma\le s}
T_j ^k B^{k,z}_{j-k+s,\sigma}(\cdot,t)\Big|\Big\|_{L^1(\bbR^d\setminus \widetilde \cO_\alpha)}
\lc 2^{- \eps s} 
\|\fS f\|_p^p,\quad \Re(z)=0,
\Ee
for some $\eps>0$.

To show \eqref{L2awayexcept} we replace the $L^2(\bbR^d\setminus \widetilde \cO_\alpha)$ norm by the $L^2(\bbR^d)$ norm and argue exactly as in the proof of 
\eqref{L2away-sv1-z=1}, using Lemma \ref{L2estimatesBsigma}.

To show  \eqref{L1awayexcept} it suffices to prove, after Minkowski's inequality for the $\sigma$-summation (involving $O(1+s)$ terms),
\Be\label{L1awayexceptfixedkappa}
\Big\|\sum_k \sup_{t\in I} \Big|\sum_{j=1}^\infty 2^{-j\la(p_z)}
T_j ^k B^{k,z}_{j-k+s,\sigma}(\cdot,t)
\Big|\Big\|_{L^1(\bbR^d\setminus \widetilde \cO_\alpha)}
\lc 2^{- \eps s} 
\|\fS f\|_p^p,\quad \Re(z)=0,\,\,\,0\le \sigma\le s.
\Ee
For the proof observe that, for  $t\in I$,  $T_j^{n,k} B^{k,z}_{j-k+s,\sigma}(\cdot,t)$  is supported in $\widetilde \cO_\alpha$ when $n\le s$ and thus does not contribute to the $L^1(\bbR^d\setminus\widetilde \cO_\alpha)$ norm.
We then use the simple $L^1$ estimate in Lemma \ref{Tnklemma}, part (i), for $n>s$ and 
$\Re(\la(p_z)) =(d-1)/2$ to  estimate the left hand side of \eqref{L1awayexceptfixedkappa} by a constant times
\begin{align*}
&2^{-sN} \sum_k\sum_j  \big\|B^{k,z}_{j-k+s,\sigma}\big\|_{1}
\\&\lc2^{-sN}
\sum_k\sum_j\sum_{\substack {Q\in \fQ_\alpha\\ L(Q)=j-k+s}}
\sum_\mu\sum_{\substack{W\in \Wmq\\ L(W)=-k+\sigma}} \ga_{W,\mu}^{p-1}
\Big\|\sum_{\substack{R\in \Rkmw}}e_R\Big\|_1.
\end{align*}
We interchange the sums and note that each $W$ is contained in a unique cube $Q\in \fQ_\alpha$, and thus because of the disjointness of the cubes in $\fQ_\alpha$
 the $(j,Q)$ sums corresponding to a fixed  $W$  
 collaps to a single term. Hence we can bound the  previous expression by $C_N$ times
\begin{align*}
&2^{-sN} \sum_k
\sum_\mu\sum_{\substack{W\in \fW^\mu\\L(W)=-k+\sigma}} \ga_{W,\mu}^{p-1}
|W|^{1/2} \Big(\sum_{\substack{R\in \Rkmw}}\|e_R\|_2^2\Big)^{1/2}
\\ &\lc 2^{-sN} 
\sum_\mu\sum_{W\in \fW^\mu} \ga_{W,\mu}^{p}|W| \lc 2^{-sN}\|\fS f\|_p^p.
\end{align*}
This completes the proof of \eqref{Lpaway}. \qed

\subsection{\it Proof of \eqref{Lpatomic}}\label{Lpatomicsect}
The estimate  follows from the inequalities
\Be\label{LpatomicOmmu}
\Big\|\Big(\sum_k \Big[\int_I \Big|\sum_{j\ge 1} 2^{-j\la(p)}
\sum_{\substack{m, \sigma:\\ \sigma>| m+k-j|, \\ \sigma\ge j }}
T_j ^{k} B^k_{m,\sigma}(\cdot,t)\Big|^q  dt \Big]^{2/q}
\Big)^{1/2}
\Big\|_{L^p(\bbR^d\setminus \widetilde \cO_\alpha)}
\lc  \|\fS f\|_p  
\Ee
and
\Be\label{Lpatomickappaoff}
\Big\|\Big(\sum_k \Big[\int_I \Big|\sum_{j\ge 1}^\infty 2^{-j\la(p)}
\sum_{\substack{m,\sigma:\\ \sigma>| m+k-j| \\ \sigma<j}}
T_j ^{k} B^k_{m,\sigma}(\cdot,t)\Big|^q  dt \Big]^{p/q}
\Big)^{1/p}
\Big\|_{L^p(\bbR^d)}\\
\lc  \|\fS f\|_p.
\Ee

\subsubsection{Proof of \eqref{LpatomicOmmu}}
We use the decomposition $T^k_j=\sum_{n>0} T^{n,k}_j$ and use Minkowski's inequality for the $j$ and $n$ sums.  
When $j+n\le \sigma$ the support of
$T_j ^{k,n} B^k_{m,\sigma}(\cdot,t)$ 
is contained in $\widetilde \cO_\alpha$, for all $t\in I$. Thus in 
\eqref{LpatomicOmmu}  we only need to consider the terms  with $|m+k-j|<\sigma$ and $
j\le \sigma\le j+n$. 
Since $\la(p)+1/q>0$ it suffices to show for fixed $j\ge 1$, that
\Be\label{LpatomicOmmu-fixedjn}
\Big\|\Big(\sum_k \Big[\int_I \Big|
\sum_{\substack{m, \sigma:\\ \sigma>| m+k-j|\\ j\le \sigma\le j+n}}
T_j ^{n,k} B^k_{m,\sigma}(\cdot,t)
\Big|^q  dt \Big]^{2/q}
\Big)^{1/2}
\Big\|_{L^p(\bbR^d)} \lc 2^{-n} 2^{-j/q}
  \|\fS f\|_p  .
\Ee

This follows from
\Be\label{LpatomicOmmu-fixedjn-z-1}
\Big\|\Big(\sum_k \Big[\int_I \Big|
\sum_{\substack{m, \sigma:\\ \sigma>| m+k-j|\\ j\le \sigma\le j+n}}
T_j ^{n,k} B^{k,z}_{m,\sigma}(\cdot,t)\Big|^q  dt \Big]^{2/q}
\Big)^{1/2}
\Big\|_{L^2(\bbR^d)} \lc 2^{-n} 2^{-j/q}
\ \|\fS f\|_p^{p/2} ,\,\,\, \Re(z)=1,
\Ee
and
\Be\label{LpatomicOmmu-fixedjn-z-0}
\Big\|\Big(\sum_k \Big[\int_I \Big|
\sum_{\substack{m, \sigma:\\ \sigma>| m+k-j|\\ j\le \sigma\le j+n}}
T_j ^{n,k} B^{k,z}_{m,\sigma}(\cdot,t)\Big|^q  dt \Big]^{2/q}
\Big)^{1/2}
\Big\|_{L^1(\bbR^d)} \lc 2^{-n} 2^{-j/q}
  \|\fS f\|_p^p , \,\,\,\Re(z)=0.
\Ee
By Lemma \ref{Tnklemma}, part (iii), 
the left hand side of 
\eqref{LpatomicOmmu-fixedjn-z-1}
  is 
\begin{align*}
&\Big(\sum_k \Big\|\Big(\int_I \Big|
\sum_{\substack{m, \sigma:\\ \sigma>| m+k-j|\\ j\le \sigma\le j+n}}
T_j ^{n,k} B^{k,z}_{m,\sigma}(\cdot,t)\Big|^q  dt \Big)^{1/q}\Big\|_2^2
\Big)^{1/2}
\lc 2^{-n-j/q}
\Big(\sum_k \Big\|
\sum_{\substack{m, \sigma:\\ \sigma>| m+k-j|\\ j\le \sigma\le j+n}}
 B^{k,z}_{m,\sigma}\Big\|_2^2\Big)^{1/2}.
 \end{align*}
 Recall that \[\text{supp}(B^{k,z}_{m,\sigma}) \subset \bigcup_{\substack{Q\in \fQ_\alpha\\L(Q)=m}}Q.\] 
 Therefore,  for   $\Re(z)=1$ we have
 \begin{align*}
 &\Big(\sum_k \Big\|
\sum_{\substack{m, \sigma:\\ \sigma>| m+k-j|\\ j\le \sigma\le j+n}}
 B^{k,z}_{m,\sigma}\Big\|_2^2\Big)^{1/2}
 =
 \Big(\sum_k \sum_{m\ge -k}\Big\|
\sum_{\substack{ \sigma:\\ \sigma>| m+k-j|\\ j\le \sigma\le j+n}}
 B^{k,z}_{m,\sigma}\Big\|_2^2\Big)^{1/2}\lc \|\fS f\|_p^p,
 \end{align*}
 by Lemma \ref{L2estimatesBsigma}. Hence \eqref{LpatomicOmmu-fixedjn-z-1} follows.


 We now turn to the proof of 
 \eqref{LpatomicOmmu-fixedjn-z-0}, where $\Re(z)=0$.
For $W\in \Wbad$ let   $Q(W)$ be  the unique cube in $\fQ_\alpha$ containing $W$.
We can split $$B^{k,z}_{m,\sigma} =\sum_{\mu\in \bbZ}\sum_{\substack{W\in \fW^\mu_{\text{bad}}\\L(W)=-k+\sigma}}  \widetilde B^{k,z}_{m,\sigma, \mu,W},$$ where 
 \[\widetilde B^{k,z}_{m,\sigma, \mu, W}  = 
 \begin{cases} 
 \gamma_{W,\mu,z}
\sum_{\substack{R\in \Rkmw}}e_R, \,\,&\text{ if } L(Q(W))=m \text{ and } L(W)=-k+\sigma,
\\
0 &\text{ if  either $L(Q(W))\neq m$ or $L(W)\neq -k+\sigma$}. 
\end{cases}
\]
 	Observe that for $j\le \sigma$, $L(W)=-k+\sigma$,  the function
	$T^{n,k}_j 
	\widetilde B^{k,z}_{m,\sigma, \mu,W}$ is supported in a  $2^{n+3}$-dilate of $W$ (with respect to its center).
	Hence, by the Minkowski and  Cauchy-Schwarz inequalities 
	we estimate for fixed $j,n$ 
	\begin{align*}
&\Big\|\Big(\sum_k \Big[\int_I \Big|
\sum_{\substack{m, \sigma:\\ \sigma>| m+k-j|\\ \sigma\le j+n}}
T_j ^{n,k}  B^{k,z}_{m,\sigma}(\cdot,t)\Big|^q  dt \Big]^{2/q}
\Big)^{1/2}
\Big\|_{L^1(\bbR^d)} 
\\
&\lc \sum_{\mu}\sum_{W\in \Wbad }
2^{nd/2} | W|^{1/2}
\Big\|\Big(\sum_{\substack{k : |L(Q(W))+k-j|\\ <L(W)+k
\le j+n} }\Big[\int_I \Big|
T_j ^{n,k} \widetilde B^{k,z}_{L(Q(W)),L(W)+k,\mu, W}(\cdot,t)\Big|^q  dt \Big]^{2/q}
\Big)^{1/2}
\Big\|_2 
\end{align*} which by an application of Lemma \ref{Tnklemma}
is bounded by 
\begin{align*}
&C_N  2^{-n(N-d/2)} 2^{-j/q}\sum_{\mu}\sum_{W\in \Wbad }
| W|^{1/2}
\Big(\sum_{\substack{k : \\|L(Q(W))+k-j|<L(W)+k\\
L(W)+k\le j+n} }\Big\|
\gamma_{W,\mu,z}
\sum_{\substack{R\in \Rkmw}}e_R\Big\|_2^2\Big)^{1/2}
\\
&\lc 2^{-n-j/q} 
   \sum_{\mu}\sum_{W\in \fW^{\mu} }
   | W|^{1/2}\ga_{W,\mu}^{p-1}
\Big(\sum_k 
\sum_{\substack{R\in \Rkmw}}\|e_R\|_2^2
 \Big)^{1/2}
 \\
&\lc 2^{-n-j/q} \sum_{\mu}\sum_{W\in \fW^{\mu} } |W|\ga_{W,\mu}^p 
\lc 2^{-n-j/q}\|\fS f\|_p^p.
  \end{align*}

 \subsubsection{Proof of \eqref{Lpatomickappaoff}}
 By Minkowski's inequality \eqref{Lpatomickappaoff} follows if we can prove for fixed $\sigma>0$,
\begin{multline}\label{Lpatomickappa}
\Big\|\Big(\sum_k \Big[\int_I \Big|\sum_{j>\sigma} 2^{-j\la(p)}
\sum_{\substack{m:\\ \sigma>| m+k-j|}}T_j ^k B^k_{m,\sigma}(\cdot,t)\Big|^q  dt \Big]^{p/q}
\Big)^{1/p}
\Big\|_{L^p(\bbR^d)}\\
\lc (1+\sigma)^{1/p} 2^{-\sigma d(\frac 1q-\frac 1{p'})} \|\fS f\|_p.
\end{multline}
When $p>1$ we use  complex interpolation to deduce this from
\begin{multline}\label{Lpatomickappa-z=1}
\Big\|\Big(\sum_k \int_I \Big|\sum_{j>\sigma} 2^{-j\la(p_z)}
\sum_{\substack{m:\\ \sigma>| m+k-j|}}T_j ^k B^{k,z}_{m,\sigma}(\cdot,t)\Big|^2  dt 
\Big)^{1/2}
\Big\|_{L^2(\bbR^d)}\\
\lc (1+\sigma)^{1/2} \|\fS f\|_p^{p/2} ,\quad\Re(z)=1,
\end{multline}
and,  with $\frac{1}{q_0}= (\frac1p-\frac 1{q'})/(\frac 2p-1)$,
\begin{multline}\label{Lpatomickappa-z=0}
\Big\|\sum_k \Big(\int_I \Big|\sum_{j>\sigma} 2^{-j\la(p_z)}
\sum_{\substack{m:\\ \sigma>| m+k-j|}}T_j ^k B^{k,z}_{m,\sigma}(\cdot,t)\Big|^{q_0}  dt \Big)^{1/q_0}
\Big\|_{L^1(\bbR^d)} \\
\lc (1+\sigma) 2^{-\sigma  d/{q_0}} \|\fS f\|_p^p, \quad\Re(z)=0.
\end{multline}
Note that $1/q_0=1-1/r$ where $r$ is as in \eqref{rdefinition}, and we have
$(1-\vth)(1,\frac{1}{q_0})+\vth (\frac 12,\frac 12)=(\frac 1p,\frac 1q)$ for $\vth=2/p'$.

We first consider the inequality for $\Re(z)=1$.
We can use the orthogonality of the functions $\varphi_j(\rho(\cdot/t))$ to estimate 
\begin{align*}
&\Big\|\Big(\sum_k \int_I \Big|\sum_{j>\sigma} 2^{-j\la(p_z)}
\sum_{\substack{m:\\ \sigma>| m+k-j|}}T_j ^k B^{k,z}_{m,\sigma}(\cdot,t)\Big|^2  dt 
\Big)^{1/2}
\Big\|_{2}
\\
&\lc \Big(\sum_k \int_I \sum_{j>\sigma } 2^{j}\Big\|
\sum_{\substack{m:\\ \sigma>| m+k-j|}}T_j ^k B^{k,z}_{m,\sigma}(\cdot,t)\Big\|_2^2  dt 
\Big)^{1/2}
\lc \Big(\sum_{k}\sum_{j>\sigma}  
\Big\|\sum_{\substack{m:\\ \sigma>| m+k-j|}}
B^{k,z}_{m,\sigma}\Big\|_2^2  
\Big)^{1/2}.
\end{align*} 
We use the disjointness of the cubes in $\fQ_\alpha$ and then interchange the $m,j$ summations. Using  that for fixed $m,k$ there are $O(1+\sigma)$ terms in the $j$ summation, we bound the last expression by
\begin{align*}\Big(\sum_{k}\sum_{j>\sigma}  
\sum_{\substack{m\ge -k:\\ \sigma>| m+k-j|}}\big\| B^{k,z}_{m,\sigma}\big\|_2^2  
\Big)^{1/2} &\lc (1+\sigma)^{1/2}
\Big(\sum_{k}
\sum_{m\ge -k}\big\| B^{k,z}_{m,\sigma}\big\|_2^2  \Big)^{1/2}
\\& \lc   (1+\sigma)^{1/2}\|\fS f\|_p^{p/2},
\end{align*}
where in the last line we have applied Lemma \ref{L2estimatesBsigma} to conclude \eqref{Lpatomickappa-z=1}.


We now turn to \eqref{Lpatomickappa-z=0}. 
We split $T^k_j=\sum_{n=0}^\infty T^{n,k}_j$, 
set $$b_{W,\mu, z}^k =\ga_{W,\mu,z}
\sum_{\substack{R\in \Rkmw}}e_R
$$
 and estimate the left hand side of  \eqref{Lpatomickappa-z=0} by
\Be\label{tagLHS}
\sum_{k} \sum_{n\ge 0}
\sum_{j>\sigma} 2^{-j\frac{d-1}{2}}
\sum_{\substack{m:\\ \sigma>| m+k-j|}} \sum_{\substack{ Q\in \fQ_\alpha\\L(Q)=m}}
\sum_\mu\sum_{\substack{W\in \Wmq\\L(W)=-k+\sigma}}   \Big\|
\Big(\int_I \big|
T_j ^{n,k} b_{W,\mu,z}^k(\cdot,t)
\big|^{q_0}  dt \Big)^{1/q_0}
\Big\|_{1}.
\Ee We bound  for fixed $W$, with $L(W)=-k+\sigma$,
\begin{align*}
&\Big\|
\Big(\int_I \big|
T_j ^{n,k} b_{W,\mu,z}^k(\cdot,t)
\big|^{q_0}  dt \Big)^{1/q_0}
\Big\|_{1}
\\
&\lc 2^{(-k+j+n)d/q_0}  \Big\|
\Big(\int_I \big|
T_j ^{n,k}b_{W,\mu, z}^k
(\cdot,t)
\big|^{q_0}  dt \Big)^{1/q_0}
\Big\|_{q_0'}
\\
&
\lc 2^{(-k+j+n)d/q_0}  2^{j(d(\frac{1}{q_0'}-\frac 12)-\frac 12)}2^{-nN} 
\|b_{W,\mu, z}^k\|_{q_0'},
\end{align*}
by Lemma \ref{Tnklemma}, part (ii).
Hence after summing in $n$
\begin{align*}
(\ref{tagLHS})\lc
 \sum_{k} 
\sum_{j>\sigma} 
\sum_{\substack{m:\\ \sigma>| m+k-j|}}\sum_{\substack{Q\in \fQ_\alpha\\L(Q)=m}}
\sum_\mu\sum_{\substack{W\in \Wmq\\L(W)=-k+\sigma}}
2^{-kd/q_0} 
\|b^k_{W,\mu,z}\|_{q_0'}.
\end{align*}
Observe that  for $L(W)+k=\sigma$,
\begin{align*}
&2^{-kd/q_0} 
\|b^k_{W,\mu,z}\|_{q_0'}\le
2^{-kd/q_0} |W|^{1/q_0'-1/2}
\|b^k_{W,\mu,z}\|_{2}
\\&\lc 2^{-\sigma d/q_0}|W|^{1/2} \gamma_{W,\mu}^{p-1}
\Big(\sum_{R\in \Rkmw} \|e_R\|_2^2\Big)^{1/2} 
\le 2^{-\sigma d/q_0}|W| \gamma_{W,\mu}^{p}.
\end{align*} 
We interchange summations  and use that, for fixed $W\in \fW^\mu_{\text{bad}}$, 
\[\#\{j\ge \sigma : | L(Q(W))+\sigma-L(W)-j|<\sigma\} =O(1+\sigma).\]
We then   obtain
\begin{align*}
(\ref{tagLHS})&\lc
 2^{-\sigma d/q_0} (1+\sigma)
\sum_\mu\sum_{W\in \fW^\mu}
|W| \gamma_{W,\mu}^{p}
\lc 2^{-\sigma d/q_0} (1+\sigma)\|\fS f\|_p^p.
\end{align*}
This  completes the proof of 
\eqref{Lpatomickappaoff}, and then 
\eqref{Lpatomic}  and  finally the proof of Theorem \ref{mainthm}.

\section{Proofs of Theorems \ref{H1thmTd} and \ref{LpthmTd}}
\label{transferencesect}
In this section we use Theorems \ref{H1thm} and \ref{Lpthm} proved in $\bbR^d$ and transference argument to establish the corresponding versions for periodic functions. Such transference arguments go back to 
De Leeuw \cite{deleeuw}. See also \cite{kenig-tomas} for transference of maximal operators and 
\cite{liu-lu},  \cite{fan-wu}  inequalities in Hardy spaces on $\bbT^d$. In our presentation we rely on the method in \cite{fan-wu}.

\subsection{\it   The $h^1(\bbT^d)\to L^{1,\infty}(\bbT^d) $ bound}
\label{h1transference}
We identify functions $f$ on $\bbT^d$ with functions on $\bbR^d$ satisfying 
$f(x+n)=f(x)$ for all $n\in \bbZ^d$. Let $Q^0=[-\frac 12, \frac 12]^d$. 

Let 
 $$h_\la(s)= (1-\upsilon_0(s)) (1-s)_+^\la$$
and 
$\cS^\la_t f=\sum_{\ell\in \bbZ^d}
h(\rho(\ell/t)) \inn{f}{e_\ell}e_\ell$. Let $\la(1)=\frac{d-1}{2}$. 
After  a reduction analogous to the one in \S\ref{origincontr} we need to prove the bound
$$\Big\| \Big(\sum_{k>0}\int_I |\cS^{\la(1)}_{2^k t} f|^q dt\Big)^{1/q}
\Big\|_{L^{1,\infty}(\bbT^d)} \lc \|f\|_{h^1(\bbT^d)}.$$
By normalizing we may assume that $\|f\|_{h^1(\bbT^d)} =1$.

By the atomic decomposition for periodic functions (\cite{goldberg}, \cite{fan-wu})
 we may assume that
$$ f= f_0 + \sum_{Q\in \cQ} c_Q a_Q,$$
where $f_0\in L^2$, $\|f_0\|_2\lc 1$, 
where $\cQ$ is a collection of cubes of sidelength at most $1/4$  which intersect the fundamental cube $Q^0$ and where  $a_Q$ is periodic and supported in $Q+\bbZ^d$, satisfying 
$\|a_Q\|_{L^2(Q^0)} \le |Q|^{-1/2}$ and 
\Be\label{polcanc}\int_Q a_Q(x) P(x) dx=0\Ee for all polynomials of degree at most $2d$.
Moreover \Be\label{H1normal}\|f\|_{h^1} \approx \|f_0\|_2+\sum_{Q\in \cQ} |c_Q| \approx 1.\Ee
The contribution acting on $f_0$ is taken care of by standard $L^2$ estimates. 

Now let $\gamma=(\gamma_1,\dots, \gamma_d)\in \{-\frac 12, 0,\frac 12\}^d=:\Gamma$ and let $Q^\gamma=\gamma+Q^0$. We can then split the family of cubes $\cQ$
 into $3^d$ disjoint families $\cQ_\gamma$ so that each cube $Q\in \cQ_\ga$ has the property that its double is contained in the cube $Q^\gamma$.
By periodicity, and the monotone convergence theorem,  it suffices to prove for each finite subset $\cN $ of $\bbN$, and for each $\gamma\in \Gamma$,
\Be \label{finitenumberofks}
 \sup_{\alpha>0} \alpha \,\,\meas\Big(\Big\{x\in Q^\ga: 
 \Big(\sum_{k\in \cN}\int_I \big |\cS^{\la(1)}_{2^k t} [\sum_{Q\in \cQ^\gamma} c_Qa_Q]\big|^q dt\Big)^{1/q}>\alpha\Big\}\Big)
\lc 1.
\Ee
It suffices to show for every {finite} subset $\sF^\gamma$ of $\cQ^\gamma$
\Be\label{finitesFgamma}
 \sup_{\alpha>0} \alpha \,\,\meas\Big(\Big\{x\in Q^\ga: 
 \Big(\sum_{k\in \cN}\int_I \big |\cS^{\la(1)}_{2^k t} [\sum_{Q\in \sF^\gamma} c_Qa_Q]\big|^q dt\Big)^{1/q}>\alpha\Big\}\Big)
\lc \sum_{Q\in \sF^\ga}|c_Q|,
\Ee
where the implicit constant is independent of $\sF^\ga$.
To see the reduction we split $\cQ_\gamma= \cup_{n=0}^\infty \sF^{\ga,n}$, where $\sF^{\ga,n}$ is finite and $\sum_{Q\in \sF^{\ga,n}}|c_Q|\le 2^{-n}$. 
By using the result of  Stein and N. Weiss on adding $L^{1,\infty}$ functions \cite[Lemma 2.3]{stein-nweiss}
the left hand side in \eqref{finitenumberofks} is bounded by 
$C\sum_{n=0}^\infty (1+n)2^{-n}\lc 1$, as  claimed.

In order to prove \eqref{finitesFgamma} we can renormalize again,  replacing $c_Q$ with $c_Q (\sum_{Q'\in \sF^\gamma}|c_{Q'}|)^{-1}$ 
and $\alpha$ with $\alpha
(\sum_{Q'\in \sF^\gamma}|c_{Q'}|)^{-1}$. It therefore remains to prove
for every finite subset $\sF^\gamma$ of $\cQ^\gamma$, and for 
$\sum_{Q\in \sF^\gamma}|c_Q|=1$, that
\Be\label{finitesFgammanormal}
 \sup_{\alpha>0} \alpha \,\,\meas\Big(\Big\{x\in Q^\ga: 
 \Big(\sum_{k\in \cN}\int_I \big |\cS^{\la(1)}_{2^k t} [\sum_{Q\in \sF^\gamma} c_Qa_Q]\big|^q dt\Big)^{1/q}>\alpha\Big\}\Big)
\lc 1,
\Ee
where the implicit constant is independent of $\sF^\ga$.

Now fix $\alpha>0$.  Let $\phi\in C^\infty$ supported in $\{x:|x|\le 1\}$ 
such that $\int\phi(x) dx=1$ and let $\phi_\eps=\eps^{-d}\phi(\eps^{-1}\cdot)$.
Choose $\eps_Q$ be small, less than  one tenth of the sidelength of $Q$  so that in addition 
$\|\phi_{\eps(Q)}*a_Q-a_Q\|_2< \alpha^{1/2} $. 
Let $\ta_Q=\phi_{\eps(Q)}*a_Q$.
Then by Tshebyshev's inequality and standard $L^2$ estimates
(such as in \S\ref{prelsect}) 
\begin{align*}&\meas\Big(\Big\{x\in Q^\ga: 
 \Big(\sum_{k\in \cN}\int_I \Big |\cS^{\la(1)}_{2^k t}\big [\sum_{Q\in \sF^\gamma} c_Q(a_Q-\ta_Q)\big]\Big|^q dt\Big)^{1/q}>\alpha\Big\}\Big)
\\
&\lc \alpha^{-2}
\Big\|
 \Big(\sum_{k\in \cN}\int_I \Big |\cS^{\la(1)}_{2^k t} \big[\sum_{Q\in \sF^\gamma} 
 c_Q(a_Q-\ta_Q)\big]\Big|^q dt\Big)^{1/q}\Big\|_2^2
\\
&\lc  \alpha^{-2} \Big(\sum_{Q\in \sF^\gamma} |c_Q| \|a_Q-\ta_Q\|_2\Big)^2
\lc \alpha^{-1}  \Big(\sum_{Q\in \sF^\gamma} |c_Q| \Big)^2 \lc \alpha^{-1};
\end{align*}
here we have used the normalization
$\sum_Q |c_Q|\le 1$.

It suffices to show that 
\Be\label{sufficesone}
\meas\Big(\Big\{x\in Q^\ga: 
 \Big(\sum_{k\in \cN}\int_I \Big |\cS^{\la(1)}_{2^k t}\big [\sum_{Q\in \sF^\gamma} c_Q\ta_Q\big]\Big|^q dt\Big)^{1/q}>\alpha\Big\}\Big)
\lc \alpha^{-1}.
\Ee
We shall now follow the argument in \cite{fan-wu} and set
\Be \label{Psidef}\Psi(x)=\prod_{i=1}^d(1-x_i^2/4)_+, \quad \Psi^\ga_N(x)= \Psi(N^{-1} (x-\ga)).
\Ee
As in   \cite{fan-wu} we use the following  formula, which is valid at least 
for $g$ in the Schwartz space of $\bbT^d$,  for all $x\in \bbR^d$.
\begin{multline}\label{commutformula}
\Psi^\ga_N(x) \cS^\la_{2^k t}g(x)- S^\la_{2^k t}[\Psi^\ga_N g](x)\,=
\\
\sum_{\ell\in \bbZ^d} \inn{g}{e_\ell}e_\ell(x) \int \Big[
h_\la(\rho(\frac{\ell}{2^kt}))
-h_\la(\rho(\frac{\ell+N^{-1}\xi}{2^kt}))
\Big] \widehat \Psi(\xi) 
e^{2\pi i \inn{x-\gamma}{N^{-1}\xi}} d\xi.
\end{multline}
As the Fourier coefficients $\inn{g}{e_\ell}$ decay rapidly,  $\widehat \Psi\in L^1$ and $h_\la$ is H\"older continuous for $\la>0$ this  implies 
\Be\label{limitrelation}
\lim_{N\to \infty} \sup_{t\in I}
 \sup_{x\in \bbR^d}
\big|
\Psi^\gamma_N(x) \cS^{\la(1)}_{2^k t} g(x)- 
S^{\la(1)}_{2^k t} [\Psi^\gamma_N g] (x)\big| =0,
\Ee
for $k\in \cN$.

Next we observe that $\Psi^\ga_N(x) \ge (3/4)^d$ for all $x\in m+Q^\ga$, when
$-N\le m_i\le N$ for $i=1,\dots, d$.
Using periodicity we see that the left hand side of 
\eqref{sufficesone} is equal to
\begin{align*}
&(2N+1)^{-d}\sum_{\substack {-N\le m_i\le N\\i=1,\dots, d}}
\meas\Big(\Big\{x\in m+Q^\ga: 
 \Big(\sum_{k\in \cN}\int_I \Big |\cS^{\la(1)}_{2^k t} \big[\sum_{Q\in \sF^\gamma} c_Q\ta_Q\big ](x)\Big|^q dt\Big)^{1/q}>\alpha\Big\}\Big)
\\
&\le (2N+1)^{-d}
\meas\Big(\Big\{x\in \bbR^d: 
 \Big(\sum_{k\in \cN}\int_I \Big |\Psi^\ga_N(x) \cS^{\la(1)}_{2^k t} \big[\sum_{Q\in \sF^\gamma} c_Q\ta_Q\big](x)\Big|^q dt\Big)^{1/q} 
 >(3/4)^d\alpha\Big\}\Big).
\end{align*}
Consider the periodic $C^\infty$  function
 $g=\sum_{Q\in \sF^\gamma} c_Q\ta_Q$ and apply 
\eqref{limitrelation}. 
Hence there is $N_0=N_0(g,\alpha,\cN)$ such that  for every $x\in \bbR^d$ and $N> N_0$,
$$\Big(\sum_{k\in \cN}\int_I \Big |\Psi^\ga_N(x) 
\cS^{\la(1)}_{2^k t}  \big[\sum_{Q\in \sF^\gamma} c_Q\ta_Q\big ](x)
- S^{\la(1)}_{2^k t}  \big[\Psi^\ga_N \sum_{Q\in \sF^\gamma} c_Q\ta_Q\big](x)
\Big|^q dt\Big)^{1/q}  < (3/4)^d \alpha/2.
$$

Assuming $N>N_0$ in what follows we see that it suffices to bound
\Be\label{Rdclaim}
(2N+1)^{-d}
\meas\Big(\Big\{x\in \bbR^d: 
 \Big(\sum_{k\in \cN}\int_I \Big |S^{\la(1)}_{2^k t}\big [
 \Psi^\gamma_N\sum_{Q\in \sF^\gamma} c_Q\ta_Q
 \big](x)\Big|^q dt\Big)^{1/q} 
 >(3/4)^d\alpha/2\Big\}\Big).
\Ee
Define for $Q\in \sF^\gamma$, $m\in \bbZ^d$,
$$a_{Q,m} (y) = \bbone_{m+Q^\ga}(y) \Psi^\gamma_N(y) \widetilde a_Q(y).$$
Then the support of $a_{Q,m}$ is in the interior of $m+Q^\ga$ and $\Psi^\gamma_N$ coincides on the support of $a_{Q,m}$ with a bounded polynomial of degree $2d$. Hence $a_{Q,m}$ is an $L^2$ function supported on the double of $Q$, such that
$\int a_{Q,m}(y) dy=0$ and such that $\|a_{Q,m}\|_2\lc |Q|^{-1/2}$. Moreover $a_{Q,m}$ is nontrivial only when $|m_i|\le 2N$ for $i=1,\dots, d$. This implies 
$$
\Big\|
\Psi^\gamma_N\sum_{Q\in \sF^\gamma} c_Q\ta_Q \Big\|_{H^1(\bbR^d)} 
\le \sum_{\substack {-2N\le m_i\le 2N\\i=1,\dots, d}} \sum_{Q\in \sF^\gamma} |c_Q| \|a_{Q,m}\|_{H^1(\bbR^d)} 
\lc (4N+1)^d.
$$
We now apply Theorem \ref{H1thm} to see that the left hand side of \eqref{Rdclaim} is bounded by $$
C\alpha^{-1}(2N+1)^{-d} \Big\|
\Psi^\gamma_N\sum_{Q\in \sF^\gamma} c_Q\ta_Q \Big\|_{H^1(\bbR^d)} 
\lc  (2N+1)^{-d} (4N+1)^d \alpha^{-1}\lc \alpha^{-1}$$ which finishes the proof of the theorem.  \qed

\subsection{\it The $L^p\to L^{p,\infty}$ bound}\label{ptransference}
The proof is similar (but more straightforward), therefore we will be brief.
Now $\la(p)$ can be negative, but we have $\la>-1/q$.
The limiting relation \eqref{limitrelation} is now replaced by 
\Be\label{limitrelation2}
\lim_{N\to \infty} 
\sup_{x\in \bbR^d}
\Big(\int_I 
\big|
\Psi^\gamma_N(x) \cS^{\la(p)}_{2^kt} g(x)- 
S^{\la(p)}_{2^kt} [\Psi^\gamma_N g](x)\big|^q dt\Big)^{1/q} =0, \qquad k\in \cN.
\Ee
Here, we consider $g\in \cS(\bbT^d)$. We sketch a proof of \eqref{limitrelation2}, based on \eqref{commutformula}.

We start by observing that 
\Be \label{ob_uni}
 \int_I |h_\la(\rho(\zeta /t))|^q dt \leq C, 
\Ee
uniformly in $\zeta \in \R^d$. To see this, note that $\rho(\zeta /t) = \rho(\zeta) t^{-1/b}$ and we may assume that $\rho(\zeta)  \sim 1$ due to the support of $h_\la$. Therefore, \eqref{ob_uni} follows by a change of variable. From this observation, we may reduce \eqref{limitrelation2} to 
\Be \label{limitrelation2_red}
\lim_{N\to \infty}  \Big(\int_I \big|
h_\la(\rho(\frac{\ell}{2^kt}))
-h_\la(\rho(\frac{\ell+N^{-1}\xi}{2^kt}))\big|^q dt\Big)^{1/q} =0
\Ee
for fixed $l,k,\xi$ using \eqref{commutformula}, Minkowski's inequality and the dominated convergence theorem.

For \eqref{limitrelation2_red}, we argue as follows. Let $h\in L^q(J)$ for a compact subinterval $J$ of $(0,\infty)$. Then 
for any
$a>0$
\[
 \lim_{\delta\to 0}\Big(\int_J | h(as) - h((a+\delta)s) |^q ds \Big)^{1/q} = 0 
\] and the limit is uniform if $a$ is taken from a compact subset of $(0,\infty)$.
This is easily seen for smooth $h$ and follows for general $h\in L^q(J)$ by an approximation argument. Changing variables $s=t^{-1/b}$ we obtain that for any compact subinterval $I\subset (0,\infty)$
\Be\label{dilcont}\lim_{\delta\to 0}  \Big(\int_I | h(at^{-1/b}) - h((a+\delta)t^{-1/b}) |^q dt \Big)^{1/q} =0.\Ee
Then \eqref{limitrelation2_red} follows from \eqref{dilcont} with $h=h_\la$, 
$\delta = \rho((\ell+N^{-1}\xi)/2^k)-\rho(\ell/2^k)$ and  $a= \rho(\ell/2^k)$ using the homogeneity and continuity of $\rho$.

Finally,  using  \eqref{limitrelation2} we get, for sufficiently large $N$,
\begin{align*} 
&\meas\Big(\Big\{x\in Q^0: 
 \Big(\sum_{k\in \cN}\int_I \big |\cS^{\la(p)}_{2^k t} g\big|^q dt\Big)^{1/q}>\alpha\Big\}\Big) \\
 &\lc  (2N+1)^{-d} \meas\Big( \Big\{ x\in \bbR^d: \Big(\sum_{k\in \cN}\int_I  \big|
 S^{\la(p)}_{2^k t} [\Psi^0_N g](x)\big |^q dt\Big)^{1/q} > (3/4)^d \alpha/2\Big\}\Big)
 \end{align*}
and by  Theorem \ref{Lpthm} we bound the right hand side by 
\[ C(2N+1)^{-d} \alpha^{-p} \|\Psi_N^0 g\|^p_{L^p(\bbR^d)}\lc \alpha^{-p} 
\|g\|_{L^p(\bbT^d)}^p.
\qedhere\]

\smallskip

\noi{\it Remark.} It is also possible to build a proof of Theorem \ref{LpthmTd} from Theorem
\ref{Lpthm}   using modifications of a duality argument by deLeeuw
\cite{deleeuw},
see also \cite{stein-weiss-book} and \cite{kenig-tomas}.

\section{Sharpness}\label{sharpnesssect}

In this section we show that Theorems   \ref{LpthmTd} and  \ref{Lpthm} fail for $q>p'$.
We shall first reduce the argument for Fourier series to the one for Fourier integrals by a familiar transplantation method and then modify an argument that was used by Tao to 
 obtain necessary conditions for the Bochner-Riesz maximal operator, see \cite[sect.5]{tao-wt},
 and  also the work  by Carbery and Soria \cite{carbery-soria} where a related argument  appears in the context of localization results for Fourier series.
Note that the almost everywhere convergence assertion in part (ii)  of Theorem \ref{LpthmTd} also fails for $q>p'$, by Stein-Nikishin theory (\cite{stein-annals61}).

\subsection{\it Fourier series}
 We have  for $f\in L^p(\bbT^d)$
\Be\label{supvsnorm}
\Big\| \sup_{T>0} \Big(\frac 1T\int_0^T |\cR^{\la }_t f|^q dt\Big)^{1/q} 
\Big\|_{L^{p,\infty}(\bbT^d)}
\ge 
\sup_{T>0}\Big\| \Big(\frac 1T\int_0^T |\cR^{\la }_t f|^q dt\Big)^{1/q} 
\Big\|_{L^{p,\infty}(\bbT^d)}\Ee
and our necessary condition will follow from Proposition \ref{necRdprop} below and the following result.
\begin{lemma}  Let $1\le p\le 2$.
Suppose that for some $C>0$
\Be\label{rieszTdassu}
\sup_{\|f\|_{L^p(\bbT^d)}=1}
\sup_{T>0}\Big\| \Big(\frac 1T\int_0^T |\cR^{\la }_t f|^q dt\Big)^{1/q} 
\Big\|_{L^{p,\infty}(\bbT^d)}\le C.
\Ee
Then also 
\Be\label{rieszRdconcl}\sup_{\|f\|_{L^p(\bbR^d)}=1}
\sup_{T>0}\Big\| \Big(\frac 1T\int_0^T |R^{\la }_t f|^q dt\Big)^{1/q} 
\Big\|_{L^{p,\infty}(\bbR^d)}\le C.
\Ee
\end{lemma}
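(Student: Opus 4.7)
The plan is to argue by a de Leeuw style dilation-periodization transference from $\bbT^d$ to $\R^d$. By density of $C_c^\infty(\R^d)$ in $L^p(\R^d)$, it suffices to verify \eqref{rieszRdconcl} for $f\in C_c^\infty(\R^d)$. Fix such $f$ with $\supp f\subset B_R$ and pick $\eps\in(0,(2R)^{-1})$. Define the $\bbT^d$-periodic function
\Be
g_\eps(y)=\sum_{\ell\in\bbZ^d} f(\eps^{-1}(y+\ell)),\qquad y\in\bbT^d,
\Ee
whose translates have pairwise disjoint supports inside the fundamental cube, so that a change of variables yields $\|g_\eps\|_{L^p(\bbT^d)}=\eps^{d/p}\|f\|_{L^p(\R^d)}$; a parallel change of variables in the Fourier integral gives $\inn{g_\eps}{e_\ell}_{\bbT^d}=\eps^d\widehat f(\eps\ell)$.

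The key algebraic identity is that, after the substitutions $y=\eps x$ and $s=\eps t$,
\Be
\cR_t^\la g_\eps(\eps x)=\eps^d\sum_{\xi\in\eps\bbZ^d,\,\rho(\xi/s)\le 1}(1-\rho(\xi/s))^\la\,\widehat f(\xi)\,e^{2\pi \iunit \inn{x}{\xi}},
\Ee
which is a Riemann sum over the lattice $\eps\bbZ^d$ for the integral defining $R_s^\la f(x)$. Since $\widehat f\in\cS$ and the Riesz multiplier has compact support in $\xi$, this sum converges to $R_s^\la f(x)$ as $\eps\to 0$ for a.e.\ $(x,s)$, and, after the subsequent $L^q$-averaging in $s$, locally uniformly in $x$, since the singularity of the multiplier along the cosphere $\{\rho(\xi)=s\}$ is smoothed out by the $s$-integration.

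Now fix $S>0$ and apply the hypothesis \eqref{rieszTdassu} to $g_\eps$ at the parameter $T=S/\eps$. Substituting $t=s/\eps$ inside the $L^q$-integral in $t$ and $y=\eps x$ inside the $L^{p,\infty}(\bbT^d)$ norm, and using the scaling identity $\|h\|_{L^{p,\infty}(\bbT^d)}=\eps^{d/p}\|h(\eps\,\cdot)\|_{L^{p,\infty}(B_\eps)}$ with $B_\eps:=[-(2\eps)^{-1},(2\eps)^{-1}]^d$, the hypothesis translates to
\Be
\Big\|\Big(\frac 1S\int_0^S|\cR^\la_{s/\eps}g_\eps(\eps\,\cdot)|^q\,ds\Big)^{1/q}\Big\|_{L^{p,\infty}(B_\eps)}\le C\,\|f\|_{L^p(\R^d)}.
\Ee
Letting $\eps\to 0$, the boxes $B_\eps$ exhaust $\R^d$ and the integrand converges a.e.\ to $\bigl(\tfrac 1S\int_0^S|R_s^\la f|^q\,ds\bigr)^{1/q}$, so Fatou's lemma for the $L^{p,\infty}$ quasi-norm delivers
\[
\Big\|\Big(\frac 1S\int_0^S|R_s^\la f|^q\,ds\Big)^{1/q}\Big\|_{L^{p,\infty}(\R^d)}\le C\,\|f\|_{L^p(\R^d)},
\]
and taking the supremum over $S>0$, followed by a standard approximation, establishes \eqref{rieszRdconcl}.

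The main technical hurdle is justifying the Riemann-sum convergence at the critical index, where $(1-\rho(\cdot/s))^\la_+$ is only H\"older continuous across $\{\rho(\xi)=s\}$. For $\la\ge 0$ it is a routine uniform continuity argument on the compact set $\supp\widehat f$; for $\la\in(-1,0)$ (arising when $1<p<2$) the error is controlled by combining the Schwartz decay of $\widehat f$ with the $s$-averaging in $(0,S)$, or alternatively by embedding $\la$ in an analytic family and using the transference devices of \cite{deleeuw,stein-weiss-book,kenig-tomas}.
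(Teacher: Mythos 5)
Your argument is essentially the paper's own proof: periodize $f$ at scale $\eps=1/L$, observe that $\cR^\la_t$ applied to the periodization, evaluated at $\eps x$, is a Riemann sum over the lattice $\eps\bbZ^d$ for $R^\la_s f(x)$ with $s=\eps t$, rescale the $L^{p,\infty}$ quasi-norm and the $t$-average accordingly, and pass to the limit $\eps\to 0$. The one step you single out as the main technical hurdle --- convergence of these Riemann sums across the cosphere singularity when $\la<0$ --- is precisely the step the paper also asserts without detail (it simply claims uniform convergence of $V^\la_{L,t}f$ to $R^\la_t f$ on $[\delta,1]\times K$ after truncating away from $t=0$ and invoking monotone convergence, where you instead use Fatou for the weak quasi-norm), so your proposal is a faithful, only cosmetically different, version of the paper's argument.
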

\begin{proof}
By scaling, density of $C^\infty_c$ functions in $L^p$ and the monotone convergence theorem it  suffices to show for all $f\in C^\infty_c(\bbR^d)$, all compact sets $K$, all $\delta\in (0,1)$, all $\eps\in (0,1)$  and all $\alpha>0$
\[
\meas \Big(\Big\{x\in K: \Big(\int_\delta^1 |R^\la_t f(x)|^q dt\Big)^{1/q} >\alpha\Big\}\Big)
\le C^p(1-\eps)^{-p} \alpha^p \|f\|_p^p.\]
Fix such $f$, $\alpha$, $\delta$, $\eps$  and $K$. For large $L\in \bbN$ define
$$V^\la_{L,t} f(x) = \sum_{\ell\in \bbZ^d} L^{-d} \widehat f(L^{-1}\ell) 
(1-\rho(t^{-1}L^{-1} \ell )_+^\la e^{2\pi i L^{-1}\inn{x}{\ell}}.$$
Then $V^\la_{L,t}f(x)$ is a Riemann sum for the integral representing $R^\la_t f(x)$. Hence we have  $$\lim_{L\to\infty}V_{L,t}^\la f(x)= R^\la_t f(x)$$ with the limit uniform in $t\in [\delta,1]$, $x\in K$. We may therefore choose $L$ such that  $$\text{supp} (f(L\cdot))\,\subset \,\{x:|x|<1/4\} \,\text{ and } \, K\subset LQ^0$$  with $Q^0=[-1,2,1/2]^d$, and 
$$\sup_{\delta\le t \le 1}\sup_{x\in K} |R^\la_t f(x)- V^\la_{L,t}f(x)|<\alpha \eps.$$ It remains to show 
\Be\label{VLwt}
\meas \Big(\Big\{x\in K: \Big(\int_\delta^1 |V^\la_{L,t} f(x)|^q dt\Big)^{1/q} >\alpha(1-\eps)\Big\}\Big)
\le C^p(1-\eps)^{-p} \alpha^p \|f\|_p^p.\Ee
Observe that for $w\in Q^0$
$$\Big(\int_\delta^1 |V_{L,t }^\la f(Lw)|^q dt\Big)^{1/q}=
\Big(\frac 1L\int_{\delta L}^L \Big| \sum_{\ell\in \bbZ^d} L^{-d} \widehat f(L^{-1}\ell) (1-\rho(\ell/s))_+^\la e^{2\pi i  \inn{w}{\ell}} \Big|^q ds\Big)^{1/q}.
$$
Let $f_L^{\text{per}} (w)
=\sum_{\kappa\in \bbZ^d} f(L(w+\kappa))$. Then from    the Poisson summation formula the Fourier coefficients of the periodic function 
$f_L^{\text{per}} $ are given by  $\inn{f_L^{\text{per}}}{e_\ell}= L^{-d}\widehat f(L^{-1}\ell)$. Hence the expression on the right hand side of the last display is equal to
$(L^{-1}\int_{\delta L}^L| \cR^\la_t f^{\text{per}}_L |^qdt)^{1/q}$.
Replacing $K$ by the larger set $LQ_0$ and then changing variables $x=Lw$ we see that the expression on the left hand side of \eqref{VLwt} is dominated by 
\begin{align*}
&L^{d} \meas \Big(\Big\{w\in Q^0: \Big(\frac 1L \int_{\delta L}^L |
\cR^\la_{t} f^{\text{per}}_L(w)|^q dt\Big)^{1/q} >\alpha(1-\eps)\Big\}\Big)
\\ &\le L^{d} C^p \alpha^{-p}(1-\eps)^{-p} \int_{Q_0} |f^{\text{per}}_L(w)|^p dw,
\end{align*}
where the last inequality follows from assumption \eqref{rieszTdassu}.
Since the support of $f(L\cdot)$ is contained in $Q_0$ one immediately gets
$$\|f^{\text{per}}_L\|_{L^p(Q^0)}^p = \|f(L\cdot)\|_{L^p(\bbR^d)}^p=L^{-d} \|f\|_{L^p(\bbR^d)}^p.$$
This shows \eqref{VLwt} and concludes the proof.
\end{proof}

\subsection{\it Fourier integrals}
Using the $\bbR^d$ analogue of \eqref{supvsnorm} we reduce the sharpness claim in Theorem \ref{Lpthm}
 to the following proposition.
\begin{prop} \label{necRdprop} Let $1<p\le 2$, and $\la>-1/2$.
Assume that there is a constant $C>0$ such that 
\Be \label{nec}
\sup_{T>1}\Big\| \Big(\frac 1T\int_0^T |R^{\la }_t f|^q dt\Big)^{1/q} 
\Big\|_{L^{p,\infty}(\R^d)}
\leq C \| f \|_{L^{p}(\R^d)}
\Ee for all Schwartz functions $f$.
Then 
\[ \la \geq \la(p) + \frac{1}{2}\left(\frac{1}{p'}-\frac{1}{q}\right). \]
In particular, if \eqref{nec} holds for $\la = \la(p)$, then $q \leq p'$.
\end{prop}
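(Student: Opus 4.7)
To prove this proposition I would follow the general strategy of testing the weak-type inequality against a carefully chosen family of Knapp-type wave packets $\{f_\delta\}$, in the spirit of Tao's argument for the maximal Bochner--Riesz operator \cite{tao-wt} with the modifications suggested by the Carbery--Soria localization argument \cite{carbery-soria}. The point is to exhibit, for each $\lambda<\lambda(p)+\frac{1}{2}(1/p'-1/q)$, a sequence of Schwartz functions $f_\delta$ for which the assumed weak-type inequality fails as $\delta\to 0$.

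First I would try a single Knapp wave packet: fix $\xi_0\in\Sigma_\rho$ and let $\hat f_\delta$ be a smooth cut-off on the standard Knapp slab $S_\delta$ adjacent to $\Sigma_\rho$ at $\xi_0$ of radial $\times$ tangential dimensions $\delta^2\times \delta^{d-1}$. Direct computation yields $\|f_\delta\|_p\sim \delta^{(d+1)/p'}$, and for $t=1+s$ with $s\in [c\delta^2,1]$ one has $|R^\lambda_t f_\delta(x)|\sim s^\lambda\,\delta^{d+1}$ on the dual physical tube $T_\delta$ of dimensions $\delta^{-2}\times\delta^{-(d-1)}$ centered at the origin. The $L^q$-averaged operator with $T=2$ is then of size $\delta^{d+1}\cdot(\int_{c\delta^2}^1 s^{q\lambda}\,ds)^{1/q}$, and applying the weak-type inequality and letting $\delta\to 0$ only yields the weak condition $\lambda\ge -1/q$, which comes from the integrability of $s^{q\lambda}$ near $s=0$ and is not the target.

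To extract the sharper condition, I would refine to a randomized superposition $f=\sum_\nu \epsilon_\nu f_{\delta,\nu}$ of $N$ Knapp packets $f_{\delta,\nu}$ at different positions on $\Sigma_\rho$ (varying both the tangential position $\xi_\nu$ and, if needed, the radial position $\rho(\xi_\nu)$), with random signs $\epsilon_\nu$. The Khintchine inequality controls $\|f\|_p\sim\|(\sum|f_{\delta,\nu}|^2)^{1/2}\|_p$, which for nearly-disjoint dual tubes gives $\|f\|_p\sim N^{1/p}\delta^{(d+1)/p'}$. On the other hand, at $x\in T_{\delta,\nu}$ the dominant contribution to $R^\lambda_t f(x)$ comes from the packet $f_{\delta,\nu}$ at times $t$ near $\rho(\xi_\nu)$, so the $L^q(dt)$-averaged maximal function retains the full amplitude $\delta^{d+1}$ on the union $\bigcup_\nu T_{\delta,\nu}$. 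Careful balancing of the number $N$ of packets, their geometric spread, and the time window $T$ (which one is free to choose, subject to $T>1$) should produce, after insertion into the weak-type inequality and letting $\delta\to 0$, the algebraic relation $\lambda+\tfrac{1}{2q}\ge \lambda(p)+\tfrac{1}{2p'}$, equivalent to the claim.

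The main obstacle I anticipate is the precise calibration of this construction: the naive multi-packet argument already gives only $\lambda\ge -1/q$, so the decisive step is to exploit the interaction between the Khintchine-in-$\epsilon$ estimate for $\|f\|_p$ and the additional averaging in the time variable, in order to amplify the ratio between the left and right sides of the weak-type inequality by a factor of $\delta^{-(\lambda(p)+1/(2p'))}$ in the limit. The structure of the conclusion (interpolating between the static critical index $\lambda(p)$ at $q=p'$ and Tao's maximal bound $\lambda(p)+\tfrac{1}{2p'}$ at $q=\infty$) suggests the test function should be a "rectangular" arrangement of packets in Fourier space calibrated to $q$, and the assumption $\lambda>-1/2$ ensures the convergence of the $\int(t-r_\nu)_+^{2\lambda}$-type sums appearing in the Khintchine analysis.
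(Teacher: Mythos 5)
Your proposal identifies the right family of ideas (Knapp-type examples, the scheme of Tao's necessary-condition argument for the maximal operator), but the decisive mechanism is missing, and you say so yourself: the single-packet computation gives only $\lambda\ge -1/q$, and the ``careful balancing'' of the randomized superposition that is supposed to upgrade this is exactly the step you do not supply. The difficulty is structural, not merely computational. In your construction all packets sit at frequency scale $\approx 1$, so the only relevant dilation parameters are $t\approx 1$ and you take $T=2$; then the $t$-average over $[0,T]$ contributes an absolute constant (for $\lambda>-1/q$) no matter how many packets you superpose. Khintchine randomization over signs controls $\|f\|_p$ but does not change the size of $\big(\tfrac1T\int_0^T|\cdot|^q\,dt\big)^{1/q}$ on each dual tube, so the exponent $q$ cannot enter the final inequality through this route: at best you would recover a fixed-$t$ necessary condition, never the term $-\tfrac{1}{2q}$.

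The paper's proof extracts the $q$-dependence from a different source: a long time window $T\to\infty$ together with a single \emph{physical-space} test function $f_T=\bbone_{P(T,\eps)}\,e^{2\pi i\eps T y_d}$, where $P(T,\eps)$ is a box of dimensions $T^{-1}\times\cdots\times T^{-1}\times T^{-1/2}$, the wide ($T^{-1/2}$) direction being the direction of modulation (the normal to $\Sigma_\rho$ at the chosen point). A stationary-phase expansion of the kernel gives $K_{\lambda,t}(x)\approx C\,t^{\frac{d-1}{2}-\lambda}G(x)e^{2\pi i tH(x)}$ on a cone around that normal, and for each $x$ in a fixed set $\Omega$ of measure $\approx_\eps 1$ the modulation cancels the kernel's oscillation precisely for $t$ in an interval $I_{x,T}\subset[0,T]$ of length $\approx\eps T^{1/2}$; the length $T^{1/2}$ is dictated by the $T^{-1/2}$ width of the box in the modulated direction. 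The $L^q$ average in $t$ then contributes the factor $(|I_{x,T}|/T)^{1/q}\approx T^{-1/(2q)}$, and comparing the resulting lower bound $c_\eps T^{-d/2-\lambda-1/(2q)}$ with $\|f_T\|_p\approx T^{(1/2-d)/p}$ as $T\to\infty$ yields exactly $\lambda\ge\lambda(p)+\frac12\big(\frac{1}{p'}-\frac1q\big)$. Without an analogue of this ``short good-time interval inside a long window'' mechanism, your construction cannot produce the claimed necessary condition.
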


\begin{proof}
We note that the inequality with a given $\rho$ is equivalent to the inequality with $\rho\circ A$ where $A$ is any rotation.

Let  $\xi^\circ\in \Sigma_\rho$ such that $|\xi^\circ|$ is maximal. Then 
the Gaussian curvature does not vanish at $\xi^0$. Choose small neighborhoods $U_1, U_0$ of $\xi^0$ in $\Sigma_\rho$ such that 
$\overline U_1\subset U_0$, 
the  Gauss map is  injective in a neighborhood of $\overline U_0$ and the curvature is bounded below on $U_0$. Let $\gamma$ be homogeneous of degree zero, $\gamma(\xi)\neq 0$ for $\xi\in U_1$ with $\gamma$ supported on the closure of the cone generated by $U_0$. 
Let $n(\xi_0)=\frac{\nabla\rho(\xi^\circ)}
{|\nabla\rho(\xi^\circ)|}$ the outer normal at $\xi_0$, let $\Gamma_\eps=\{x\in \bbR^d: \big|\frac{x}{|x|}-n(\xi_0)\big|
\le 2\eps\}$, with $\eps$ so small that 
$\Gamma_\eps$ is contained in the cone generated by the normal vectors  $\nabla\rho(\xi)$ with $\xi\in U_1$. Let, for $R\gg 1$,  $\Gamma_{\eps,R}=\{x\in\Gamma_\eps:|x|\ge R\}$.
By the choice of $\eps$ there is,  for each $x\in \Gamma_\eps$,  a unique $\Xi(x)\in \Sigma_\rho$, so that $\gamma(\Xi(x))\neq 0$ and so that $x$ is normal to $\Sigma_\rho$ at $\Xi(x)$. Clearly $x\mapsto\Xi(x)$ is homogeneous of degree zero on $\Gamma$, smooth away from the origin.
By a rotation we may assume
\Be\label{rotation} n(\xi^\circ)= (0,\dots,0,1).\Ee

By \S\ref{origincontr} inequality \eqref{nec} also implies the similar inequality where $R^\la_t f$ is replaced with $S^\la_t f$ and $S^\la_t$ is as in \eqref{Slatdef}.
 Let 
 $h_\la(s)= (1-\upsilon_0(s)) (1-s)_+^\la$
 and $$ K_{\la,t}  (x)= t^d \cF^{-1} [ \gamma \,h_\la\!\circ \!\rho](tx).$$ 
 Observe that 
 $K_{\la,t}*f=S^\la_t f_\gamma$ with $\widehat f_\ga=\gamma \widehat f$. By the H\"ormander multiplier theorem $\gamma$ is a Fourier multiplier of $L^p$ and we see 
 that \eqref{nec} implies that
 \Be \label{necK}\sup_{T>0}
\Big\| 
 \Big(\frac 1T\int_0^T |K_{\la,t} *f|^q dt\Big)^{1/q} 
\Big\|_{L^{p,\infty}(\R^d)}
\leq C \| f \|_{L^{p}(\R^d)}.
\Ee 

We now derive an asymptotic expansion for $K_{\la, 1}(x)$ when $x\in \Gamma_{\eps,R}$.
Recall that $\rho $ is homogeneous of degree $1/b$, i.e. $\rho(t^b\xi)=t\rho(\xi)$. 
We use generalized  polar coordinates $\xi=\rho^b\xi(\om)$ where $\om\to \xi(\om)$ is a parametrization of $\Sigma_\rho$ in a neighborhood of $U_0$. 
Then \begin{align*} d\xi&= b\rho^{db-1} d\rho\,\inn{\xi(\om)}{n(\xi(\om))} 
\big(\det (\frac{\partial\xi}{\partial\om})^\intercal
\frac{\partial\xi}{\partial\om}\big)^{1/2}\, d\om
\\
&= \rho^{db-1} d\rho\,\,|\nabla\rho(\xi')|^{-1} d\sigma(\xi'), \quad \xi'=\xi(\om).
\end{align*} 
Here we have used Euler's homogeneity relation $b\inn{\xi}{\nabla\rho(\xi)}=\rho(\xi)$ for vectors on $\Sigma_\rho$.
Then 
\Be\label{polar}
K_{\la,1}(x)
=\int_0^\infty h_\la(\rho)\rho^{bd-1}\int_{\Sigma_\rho}\gamma(\xi') e^{2\pi i\rho\inn{\xi'}{x}} 
\frac{d\sigma(\xi')}{|\nabla\rho(\xi')|} 
\,d\rho\,.
\Ee
We use the method of stationary phase and get for 
 $x\in \Gamma_{\eps, R}$
\Be \label {gahdec}K_{\la,1}(x) = I(x)+\sum_{j=1}^N II_j(x)+III(x),\Ee
where
\begin{align*}
 I(x)&=c\int_0^\infty h_\la(\rho)\rho^{{bd-1-\frac{d-1}{2}}} e^{2\pi i\rho\inn{\Xi(x)}{x}}  d\rho \,\,
 \frac{\gamma(\Xi(x)) |\nabla\rho(\Xi(x))|^{-1}  }
{( \inn{\Xi(x)}{x})^{\frac{d-1}{2}} |\text{curv}(\Xi(x))|^{1/2} },
\end{align*}
where $\text{curv}(\Xi(x))$ is the Gaussian curvature at $\Xi(x)$ 
and $c\neq 0$, and
\begin{align*}
 II_j(x)&=c_j\int_0^\infty h_\la(\rho)\rho^{{bd-1-\frac{d-1}{2}-j}} e^{2\pi i\rho\inn{\Xi(x)}{x}}  d\rho \,\,
 \frac{\gamma_j(\Xi(x))   }
{( \inn{\Xi(x)}{x})^{\frac{d-1}{2}+j} |\text{curv}(\Xi(x))|^{1/2} },
\end{align*}
where $\gamma_j$ is smooth.
For the remainder term we get 
$$| III (x) |\lc_N \|h\|_1|x|^{-N}\,, \quad x\in \Gamma_{\eps, R}\,.$$
In the resulting $\rho$ integrals we use asymptotics for the one-dimensional Fourier transform of 
$h_\la$,
 \cf. \cite[\S 2.8]{erdelyi},
and see that for $x\in \Gamma_{\eps, R}$,
$$
\int_0^\infty h_\la(\rho)
\rho^{{bd-\frac{d+1}{2}}} e^{2\pi i\rho\inn{\Xi(x)}{x}}  d\rho \,\, = C_\la \inn{\Xi(x)}{x}^{-\la-1} 
e^{2\pi i \inn{\Xi(x)}{x}}
+ O(\inn{\Xi(x)}{x})^{-\la-2}, 
$$
with similar asymptotics for the $\rho$-integrals in the  terms $II_j$.

Now set \, for $x\in \Gamma_\eps$,  $H(x)=\inn{\Xi(x)}{x}$ and use 
Euler's homogeneity relation  to write
$$H(x)
= |x|  \biginn{\Xi(x)}{ \frac{\nabla\rho(\Xi(x))}{|\nabla\rho(\Xi(x))|}}=
|x|\frac{\rho(\Xi(x))}{b|\nabla\rho(\Xi(x))|} = 
\frac{|x|}{b|\nabla\rho(\Xi(x))|}. $$  
If $\eps$ is small we then have for $ t |x|\gg R$,
$$ K_{\la,t}(x) = A_{\la} (x,t) + B_{\la}(x,t), \quad |x'|\le \eps^2|x_d|,$$
where
\begin{align}\notag
A_{\la}(x,t) \,=\,&
C(\la) 
 t^{d-\frac{d+1}{2}-\la}  G(x)
 e^{2\pi i t H(x)},
 \\
 \label{Gxdef}
 &\text{ where } G(x) = H(x)^{-\frac{d+1}{2}-\la} \, \frac{\gamma(\Xi(x)) |\nabla\rho(\Xi(x))|^{-1}  }
 {|\text{curv}(\Xi(x))|^{1/2} }
 \end{align}
 and \[ B_\la(x,t) \lc t^{d-\frac{d+3}{2}-\la}  H(x)^{-\frac{d+3}{2}-\la}  .\]

Recall 
\eqref{rotation} and split 
 $y=(y',y_d)$. 
We now let 
\[ P(T, \eps)= \{y: |y'|\le  T^{-1} \eps, \,\,|y_d|\le T^{-1/2}\eps\} \]
 and define 
$$f_T(y) =  \bbone_{P(T,\eps)}(y)
e^{2\pi i \eps T y_d}.$$
Then \Be\label{fTLp}\|f_T\|_p\lc T^{\frac 1p(\frac 12-d)}.\Ee
We examine the integrals $K_{\la, t} * f_T(x) $ for $|x|\approx 1$ and $R\ll t \approx \eps T$.
We may obtain a lower bound for the absolute value of  this  integral if we can choose $t$ for given $x$ such that
\Be  \label{tchoice} 2\pi (\eps Ty_d+tH(x-y)-tH(x) )\in (-\frac{\pi}{4},\frac \pi 4) \text{ for all $y\in \text{supp}( f_T)$.} \Ee
As the Gauss map is invertible near $\xi^\circ$ we observe that $H$ is smooth and homogeneous of degree $1$.  We have $\nabla H(x)= \xi^\circ+O(\eps)$ and thus $\partial_{x_d}H(x)\ge c>0$.
Now  
\begin{multline}\label{phaseappr}
\eps Ty_d+tH(x-y)-tH(x) 
= \\-t\sum_{i=1}^{d-1} y_i \partial_{x_i}H(x)
+y_d (\eps T- t \partial_{x_d}H (x)) +t\sum_{i,j=1}^d y_iy_j \int_0^1 (1-s) \partial^2_{x_ix_j}H(x-sy) ds.
\end{multline}
The first and the third term on the right hand side are $O(\eps)$ when $y\in \text{supp}( f_T)$.
We choose $t$ in the interval
\Be\label{IxT} I_{x,T}= \Big[\frac{\eps T}{\partial_{x_d}\!H(x)}-\eps T^{1/2} ,
\frac{\eps T}{\partial_{x_d}\!H(x)}+\eps T^{1/2} \Big].
\Ee
We assume that $\eps$ is chosen so small  that $I_{x,T}\subset [0,T]$.
If $t\in I_{x,T}$ the second term on the right hand side of \eqref{phaseappr} will be $O(\eps)$ as well so that \eqref{tchoice} is satisfied.

We now split
\[ K_{\la,t}*f_T(x)= \sJ_1(x,t)+ \sJ_2(x,t) +\sJ_3(x,t)\]
with
\begin{align*}
\sJ_1(x,t)&= C(\la) G(x) e^{2\pi itH(x)} t^{\frac{d-1}{2}-\la} 
\int e^{2\pi i (T\eps y_d+tH(x-y)-tH(x))} \bbone_{P(T,\eps)}(y) \, dy,
\\
 \sJ_2(x,t)&= C(\la)   t^{\frac{d-1}{2}-\la} 
\int e^{2\pi i (T\eps y_d+tH(x-y))} (G(x-y)-G(x)) \bbone_{P(T,\eps)}(y) \, dy,
\\ \sJ_3(x,t)&= \int B_\la(x-y,t) f_T(y) \,dy\,.
\end{align*}

We estimate these terms for \Be\label{xt-assu}x\in \Omega:=\{x: |x'|\le \eps^2|x_d|,\,\, 1/2\le|x_d| \le 1\}, \quad 
t\in I_{x,T}.\Ee Then by \eqref{tchoice} 
the real part of the integrand in the definition of $\sJ_1(x,t)$ is bounded below by 
$2^{-1/2}\bbone_{P(T,\eps)}(y)$ and therefore, for $x\in \Omega$,
\begin{align*} 
|\sJ_1(x,t)| &\ge C G(x) t^{\frac{d-1}{2}-\la}
\int \bbone_{P(T,\eps)}(y)
\, dy
\\&\ge c t^{\frac{d-1}{2}-\la} T^{\frac 12-d}.
\end{align*} Moreover, \begin{align*}
|\sJ_2(x,t)|&\lc  t^{\frac{d-1}{2}-\la}   \eps T^{-1/2} T^{\frac 12-d}
\\
|\sJ_3(x,t)|&\lc  t^{\frac{d-3}{2}-\la}   T^{\frac 12-d}.
\end{align*}
Hence  for small $\eps$ and $t|x|\gg R$, $t\in I_{x,T}$ the term $|\sJ_1(x,t)| $ is significantly larger than the terms $|\sJ_2(x,t)|$ and $|\sJ_3(x,t)|$.
Consequently, by $| I_{x,T}|\ge \eps T^{1/2}$, and assuming 
\eqref{xt-assu} we get 
\begin{align*}
&\Big(\frac 1T\int_0^T |K_{\la,t}* f_T(x)|^q dt\Big)^{1/q} \ge
\Big(\frac 1T\int_{I_{x,T} }|K_{\la,t}* f_T(x)|^q dt\Big)^{1/q}
\\ &\ge c \eps^{1/q} T^{-1/2q} (\eps T)^{\frac{d-1}{2}-\la} T^{1/2-d} = c_\eps T^{-\frac d2-\la-\frac{1}{2q}}
\end{align*} 
and thus
\[\Big\|\Big(\frac 1T\int_0^T |K_{\la,t}*f_T|^qdt\Big)^{1/q} \Big\|_{L^{p,\infty}}
\gc_\eps 
T^{-\frac d2-\la-\frac{1}{2q}} T^{\frac dp-\frac1{2p}}  \|f_T\|_p
\]
which for  $T\to \infty$  implies  $\la\ge \la(p) + \frac12(1-\frac 1p-\frac 1q)$.
\end{proof}



\section{An $L^1$ result}\label{L1resultsect}
We currently do not have an analogue of Theorem \ref{H1thmTd} for general functions in $L^1(\bbT^d)$. We  formulate a weaker result which is essentially a consequence of Theorem \ref{H1thmTd}.

\begin{thm}\label{L1thm} (i) Let $f\in L^1( \bbT^d)$. Then for all $q<\infty$, and $\la(1)=\frac{d-1}{2}$,
$$\lim_{T\to \infty} \Big\| \Big(\frac 1T\int_0^T |\cR^{\la(1)}_t f-f|^q dt\Big)^{1/q}\Big\|_{L^{1,\infty}(\bbT^d)}=0.$$

(ii) The analogous statement holds on $L^1(\bbR^d) $ with $R^{\la(1)}_tf$ in place of $\cR^{\la(1)}_t f$.
\end{thm}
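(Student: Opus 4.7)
The plan is to combine Theorem \ref{H1thmTd} (resp.\ Theorem \ref{H1thm}) with a standard density argument, after establishing a uniform (in $T$) weak-type $(1,1)$ bound for the non-maximal strong summability operator:
\begin{equation}\label{uniformwt}
\Big\|\Big(\tfrac{1}{T}\int_0^T |\cR^{\la(1)}_t f|^q\, dt\Big)^{1/q}\Big\|_{L^{1,\infty}(\bbT^d)} \le C\|f\|_{L^1(\bbT^d)}, \qquad T>0,
\end{equation}
and its $\bbR^d$ analogue for part (ii). This is a vector-valued refinement of Christ's classical weak-type $(1,1)$ for Bochner--Riesz at the critical index. Granted \eqref{uniformwt}, given $\eps>0$ decompose $f=g+h$ with $g$ a trigonometric polynomial and $\|h\|_{L^1}<\eps$ (for part (ii), replace trigonometric polynomials by compactly supported Schwartz functions on $\bbR^d$). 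Since $\cR^{\la(1)}_t g\to g$ uniformly on $\bbT^d$ as $t\to\infty$ and $\{\cR^{\la(1)}_t g\}$ is uniformly bounded, it follows that
$$ \Big(\tfrac{1}{T}\int_0^T |\cR^{\la(1)}_t g-g|^q\,dt\Big)^{1/q}\longrightarrow 0 \quad\text{uniformly on } \bbT^d.$$
For the remainder, the triangle inequality $|\cR^{\la(1)}_t h-h|\le|\cR^{\la(1)}_t h|+|h|$ combined with \eqref{uniformwt} and $\|h\|_{L^{1,\infty}}\le\|h\|_{L^1}$ yields
$$ \Big\|\Big(\tfrac{1}{T}\int_0^T |\cR^{\la(1)}_t h-h|^q\,dt\Big)^{1/q}\Big\|_{L^{1,\infty}(\bbT^d)}\lc\|h\|_{L^1}<C\eps,$$
uniformly in $T$. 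Combining via quasi-subadditivity of the $L^{1,\infty}$ quasi-norm and letting $T\to\infty$ and then $\eps\to 0$ yields part (i); part (ii) is handled identically on $\bbR^d$.

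To prove \eqref{uniformwt}, one performs a standard Calder\'on--Zygmund decomposition of $f$ at level $\alpha$, $f=\tilde g+\sum_Q b_Q$, with $\|\tilde g\|_2^2\lc\alpha\|f\|_{L^1}$, each $b_Q$ supported in a disjoint cube $Q$ satisfying $\int b_Q=0$, $\|b_Q\|_1\lc\alpha|Q|$, and $\sum|Q|\lc\|f\|_{L^1}/\alpha$. The good part is controlled by Tshebyshev's inequality combined with the $L^2$ estimate in Lemma \ref{L2lemma}. The bad part, restricted to the complement of the double $Q^*$ of each cube, is controlled by the vector-valued H\"ormander-type estimate
$$ \int_{(Q^*)^c}\Big(\tfrac{1}{T}\int_0^T |K_{\la(1),t}(x-y)-K_{\la(1),t}(x-x_Q)|^q\,dt\Big)^{1/q}dx\lc 1,$$
uniformly in $T$, $Q$, and $y\in Q$, where $K_{\la,t}$ denotes the kernel of $R^\la_t$ on $\bbR^d$. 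This estimate is derived from the spherical cap decomposition of Section \ref{prelsect} (notably Lemmas \ref{multsize} and \ref{intbypartsest}), in the spirit of the Christ--Sogge weak-$(1,1)$ argument; the averaging in $t$ only helps. The $\bbT^d$ version of \eqref{uniformwt} then follows from the $\bbR^d$ one via the transference argument of Section \ref{transferencesect}. The main obstacle is the vector-valued H\"ormander estimate above; once established, the remainder of the proof is a textbook density reduction.
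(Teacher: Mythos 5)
Your outer reduction is the same as the paper's: reduce to a weak-type $(1,1)$ bound that is uniform in $T$ (but with the supremum over $T$ \emph{outside} the $L^{1,\infty}$ quasi-norm), and then conclude by density, handling the nice piece by uniform convergence and the small piece by the uniform bound together with quasi-subadditivity of $L^{1,\infty}$. That part is fine.

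The gap is in your proposed proof of the uniform bound \eqref{uniformwt}. The vector-valued H\"ormander estimate you rely on is false at the critical index. At $\la=\la(1)=\frac{d-1}{2}$ the kernel satisfies $K_{\la,t}(x)\approx |x|^{-d}e^{2\pi i t\rho^*(x)}$ for $t|x|\gtrsim 1$ (with $\rho^*$ the dual homogeneous function), which is exactly the borderline non-integrable decay; this is why even the single-$t$ weak type $(1,1)$ (Christ, Christ--Sogge) cannot be obtained from a H\"ormander condition. The $t$-average does not rescue it: for $y\in Q$ with $\ell(Q)\approx r$ the dominant contribution to $K_{\la,t}(x-y)-K_{\la,t}(x)$ is $|x|^{-d}\big(e^{2\pi i t(\rho^*(x-y)-\rho^*(x))}-1\big)$, and since $|\rho^*(x-y)-\rho^*(x)|\approx r$ for a fixed proportion of directions, the factor $\big(\frac1T\int_0^T|e^{2\pi i t\delta}-1|^q\,dt\big)^{1/q}$ with $\delta\approx r$ is bounded below by a constant once $T\gtrsim 1/r$. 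Hence the $q$-averaged difference is $\gtrsim|x|^{-d}$ on a set of directions of positive measure for every $|x|>2r$, and $\int_{(Q^*)^c}|x|^{-d}\,dx$ diverges logarithmically. The Christ--Sogge machinery (and Theorem \ref{mainthm} of this paper) is designed precisely to avoid this failure, via $L^2$ estimates off exceptional sets rather than kernel regularity, so attributing the H\"ormander estimate to ``the spirit of Christ--Sogge'' does not repair it. The paper's actual route to \eqref{uniformwt} is much shorter and uses no new Calder\'on--Zygmund argument: split $[0,T]=\bigcup_l[T2^{-l},T2^{-l+1}]$, note that on a single dyadic range $t\in[A,2A]$ only one Littlewood--Paley piece of $f$ is relevant so that Theorem \ref{mainthm} applies with $\|\cF^{-1}[\eta(A^{-1}\cdot)]*f\|_{H^1}\lc\|f\|_{L^1}$ uniformly in $A$, and then sum the dyadic pieces in $L^{1,\infty}$ using the Stein--N.~Weiss lemma together with the weights $2^{-l/q}$ (this is where $q<\infty$ is used). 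You should replace your Calder\'on--Zygmund step by this scale-by-scale application of the main theorem, or by some genuine substitute for it; as written, the key estimate in your argument does not hold.
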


\begin{proof}  Since the convergence holds for Schwartz function one can by a standard approximation argument  reduce the proof of  (ii)  to the inequality
\Be\label{L1ineq}\sup_{T>0} \Big\| \Big(\frac 1T\int_0^T |R^{\la(1)}_t \!f|^q dt\Big)^{1/q}\Big\|_{L^{1,\infty}(\bbR^d)}\lc \|f\|_{L^1(\bbR^d)}. \Ee
Similarly the proof of (i) is reduced to a  corresponding inequality on $\bbT^d$, with the supremum in $T$ extended over $T\ge 1$. The weak type $(1,1)$ inequality in the $\bbT^d$ case  follows from the $\bbR^d$  case  by the transference arguments of \S\ref{transferencesect}. Therefore, it suffices to show \eqref{L1ineq}.

By  the maximal estimate in \S\ref{origincontr}  it remains to prove
\Be\label{L1ineqtrunc} \Big\| \Big(\frac 1T \int_0^T |S^{\la(1)}_t \!f|^q dt\Big)^{1/q}\Big\|_{L^{1,\infty}(\bbR^d)}\lc \|f\|_{L^1(\bbR^d)},\Ee
where $S^{\la(1)}_t$ is as in \eqref{Slatdef}.
We may assume $q\ge 2$. Now 
\begin{align*}
\Big(\frac 1T \int_0^T |S^{\la(1)}_t \!f (x)|^q dt\Big)^{1/q}
\le \sum_{l=0} 2^{-l/q} \Big( \frac{1}{T2^{-l}}\int_{T2^{-l}}^{T2^{-l+1}} |S_t^{\la(1)}\!f(x)|^q dt\Big)^{1/q}
\end{align*}
and we claim the inequality 
\Be\label{Ascaled} 
\sup_{A>0}\Big\|\Big( \frac{1}{A}\int_{A}^{2A} |S_t^{\la(1)}\!f|^q dt\Big)^{1/q}\Big\|_{L^{1,\infty}} \le C_q \|f\|_1.
\Ee
Assuming  that \eqref{Ascaled}  is verified  we can deduce that the left hand side of \eqref{L1ineqtrunc}
is bounded by $C_q \tilde C \sum_{l>0} (1+l) 2^{-l/q} \|f\|_1 \lc_q\|f\|_1$, by the theorem  of Stein and N. Weiss  \cite[Lemma 2.3]{stein-nweiss} on summing $L^{1,\infty}$ functions.  

Let $\eta$ be as \eqref{eta=1}.
Then our main result, Theorem \ref{mainthm}, yields for all $A>0$
\[
\Big\|\Big( \frac{1}{A}\int_{A}^{2A} |S_t^{\la(1)}\!f|^q dt\Big)^{1/q}\Big\|_{L^{1,\infty}(\bbR^d)} \le C_q 
\| \cF^{-1} [\eta(A^{-1} \cdot)] * f \|_{H^1(\bbR^d)}.
\] 
Since $\eta $ is $C^\infty$ and compactly supported away from the origin we have 
\[ \| \cF^{-1} [\eta(A^{-1} \cdot)] * f \|_{H^1(\bbR^d)} \lc \|f\|_{L^1(\bbR^d)}\]
uniformly in $A$. This yields \eqref{Ascaled}  and concludes the proof of \eqref{L1ineqtrunc}.
\end{proof}
As an immediate consequence of  Theorem \ref{L1thm} we get 
\begin{cor} \label{strongqcor}
Let $f\in L^1(\bbT^d)$. There is a subsequence  $T_j\to \infty$ such that
\Be\label{subsequence} \lim_{j\to\infty} \Big(\frac{1}{T_j} \int_0^{T_j} |\cR^{\la(1)}_t \!f(x)-f(x) 
|^q dt\Big)^{1/q} =0 \text{ a.e.}\Ee
\end{cor}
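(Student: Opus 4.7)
The plan is to deduce the a.e.\ convergence along a subsequence from the $L^{1,\infty}$ convergence provided by Theorem \ref{L1thm}(i), via the standard fact that convergence in measure (on a finite measure space) passes to a.e.\ convergence along a suitable subsequence. For fixed $q<\infty$ and $\lambda(1)=(d-1)/2$, define
\[
F_T(x) \,=\, \Big(\frac 1T\int_0^T |\cR^{\la(1)}_t f(x)-f(x)|^q\, dt\Big)^{1/q}, \qquad x\in \bbT^d,\ T>0.
\]
By Theorem \ref{L1thm}(i), $\|F_T\|_{L^{1,\infty}(\bbT^d)}\to 0$ as $T\to \infty$.

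First I would observe that $L^{1,\infty}$ convergence to $0$ implies convergence in measure on $\bbT^d$: for every $\alpha>0$,
\[
\meas\bigl(\{x\in \bbT^d: F_T(x)>\alpha\}\bigr) \,\le\, \alpha^{-1}\|F_T\|_{L^{1,\infty}(\bbT^d)} \,\longrightarrow\, 0
\]
as $T\to \infty$. Next, I would extract a subsequence by a diagonal choice: since $\|F_T\|_{L^{1,\infty}}\to 0$, I can pick $T_j\to \infty$ so that
\[
\meas\bigl(\{x\in \bbT^d: F_{T_j}(x)>2^{-j}\}\bigr)\,\le\, 2^{-j}, \qquad j=1,2,\dots.
\]

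Finally, I would apply the Borel--Cantelli lemma to the events $E_j=\{F_{T_j}>2^{-j}\}$. Since $\sum_j \meas(E_j)\le \sum_j 2^{-j}<\infty$, for a.e.\ $x\in\bbT^d$ there is $j_0(x)$ with $F_{T_j}(x)\le 2^{-j}$ for all $j\ge j_0(x)$. This yields $F_{T_j}(x)\to 0$ for a.e.\ $x$, which is precisely \eqref{subsequence}. There is no real obstacle here: the content is in Theorem \ref{L1thm}, and this corollary is the routine measure-theoretic extraction of an a.e.\ convergent subsequence from convergence in the weak $L^1$ quasi-norm on the finite measure space $\bbT^d$.
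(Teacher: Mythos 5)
Your argument is correct and is exactly the standard extraction the paper has in mind when it calls the corollary an ``immediate consequence'' of Theorem \ref{L1thm}: weak-$L^1$ convergence gives convergence in measure on the finite measure space $\bbT^d$, and Borel--Cantelli along a rapidly chosen subsequence upgrades this to a.e.\ convergence. No gaps.
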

Arguing as 
in \cite[ch. XIII.7] {zygmund}  or \cite[\S4]{bwilson}
we get
\begin{cor} \label{almostalmost}
 Let $f\in L^1(\bbT^d)$. For almost every $x\in \bbT^d$ there is a measurable set $E=E(f,x)$ of upper density one, i.e. satisfying
 \Be\label{limsupconcl}
 \limsup_{T\to \infty} \frac{|E\cap [0,T]|}{T}=1\Ee
 such that  
 $$\lim_{\substack{ t\to \infty\\ t\in E} } \cR_t^{\la(1)}\!f(x)=f(x).$$
 \end{cor}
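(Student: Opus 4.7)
The plan is to deduce Corollary~\ref{almostalmost} from Corollary~\ref{strongqcor} by a standard diagonal construction; this is the (upper-density) variant of the classical argument in \cite[ch.~XIII.7]{zygmund} showing that $q$-strong convergence forces almost convergence on a density-one set. The reason we only get \emph{upper} density one here, rather than full density one, is that Corollary~\ref{strongqcor} only supplies $q$-strong convergence along a subsequence $T_j\to\infty$.

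First I would apply Corollary~\ref{strongqcor} with $q=2$ to fix a subsequence $T_j\to\infty$ and a full-measure set of $x\in\bbT^d$ on which
\[
\frac{1}{T_j}\int_0^{T_j}\bigl|\cR^{\la(1)}_t f(x)-f(x)\bigr|^2\,dt \;\longrightarrow\; 0,\qquad j\to\infty.
\]
Then I fix such an $x$, write $g(t)=\cR^{\la(1)}_t f(x)-f(x)$, and for each $n\in\bbN$ set $F_n=\{t>0:|g(t)|>1/n\}$. By Chebyshev's inequality $|F_n\cap[0,T_j]|/T_j\to 0$ as $j\to\infty$ for every fixed $n$, so I can choose inductively a subsequence $S_1<S_2<\cdots$ of $\{T_j\}$ satisfying
\[
|F_k\cap[0,S_{n+1}]|\le 2^{-(n+1)}S_{n+1}\quad\text{for all }k=1,\ldots,n+1.
\]

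The candidate good set is
\[
E=E(f,x):=[0,S_1)\cup\bigcup_{n=1}^\infty\bigl(F_n^c\cap[S_n,S_{n+1})\bigr).
\]
If $t\in E$ and $t\ge S_n$, then $t\in[S_m,S_{m+1})$ for some $m\ge n$, whence $|g(t)|\le 1/m\le 1/n$; it follows that $\lim_{t\to\infty,\,t\in E}g(t)=0$. For the density assertion, taking $T=S_{N+1}$ gives $E^c\cap[0,S_{N+1}]=\bigcup_{n=1}^N(F_n\cap[S_n,S_{n+1}))$ up to a null set, so
\[
\frac{|E^c\cap[0,S_{N+1}]|}{S_{N+1}}\le\sum_{n=1}^N\frac{|F_n\cap[0,S_{N+1}]|}{S_{N+1}}\le N\cdot 2^{-(N+1)}\longrightarrow 0,
\]
which implies $\limsup_{T\to\infty}|E\cap[0,T]|/T=1$, as required by \eqref{limsupconcl}. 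I do not anticipate any serious obstacle here: the substantial input (strong $q$-summability along some subsequence for a.e.\ $x$) is already provided by Corollary~\ref{strongqcor} via the weak-type bound of Theorem~\ref{L1thm}, and the remainder of the argument is elementary measure-theoretic bookkeeping.
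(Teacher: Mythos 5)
Your proposal is correct and follows essentially the same argument as the paper: fix $x$ where the subsequential strong convergence of Corollary \ref{strongqcor} holds, use Chebyshev to show each exceptional set $F_n=\{t:|g(t)|>1/n\}$ has vanishing relative measure along $\{T_j\}$, and glue the complements over consecutive blocks $[S_n,S_{n+1})$ of a further subsequence. The only (immaterial) difference is the bookkeeping for \eqref{limsupconcl}: you bound $|E^\complement\cap[0,S_{N+1}]|$ by a geometric-type sum, while the paper exploits the nestedness of the sets $E_m=F_m^\complement$ to bound $|E\cap[0,T_{j_{m+1}}]|$ from below directly.
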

 For convenience of the reader we give  a proof.
 \begin{proof} 
  Fix $x$ such that  \eqref{subsequence} in Corollary \ref{strongqcor} holds and let $g(t)= |\cR^{\la(1)}_t \!f(x)-f(x) |$.
  We may assume that $T_j$ is increasing in $j$.
    For $m=1,2, \dots$ let $E_m=\{t: g(t)\le 1/m\}$.  By Tshebyshev's inequality we have
    \[
    \frac {|E_m^\complement\cap [0, T_j]|}{T_j} 
    \le m^q \frac 1{T_j}\int_0^{T_j} g(t)^q dt  
    \]
    which by assumption tends to $0$ as $j\to \infty$. Hence
    $\lim_{j\to \infty} T_j^{-1} |E_m\cap [0, T_j] |=1$. 
    Thus we may choose a strictly  increasing sequence $j_m$ of positive integers such that 
 $T_j^{-1} |E_m\cap [0, T_j] |> 1-m^{-1}$ for $j\ge j_m$.
 Let $E=[0, T_{j_1} ]\cup  \bigcup_{m=1}^\infty  (E_m\cap [T_{j_m}, T_{j_{m+1}}])$. Since the sets $E_m$ are decreasing we have
 $$|E\cap [0, T_{j_{m+1}}]| \ge |E_m\cap [0, T_{j_{m+1}}]|  \ge (1-m^{-1} ) T_{j_{m+1}}$$ and hence 
 $\limsup_{T\to \infty} T^{-1} |E\cap [0,T]|=1$. 
 Now $E\cap [T_{j_m},\infty]\subset E_m$ and thus  $g(t)\le m^{-1}$ on this set. It follows that  $g(t)\to 0$ as $t\to \infty$ within $E$.
 \end{proof}

It would be desirable to replace the $\limsup$   in \eqref{limsupconcl} by the $\liminf$.  
The proof of the corollary shows that this would require the existence a.e. of the limit in 
\eqref{subsequence} for {\it all } sequences $T_j\to \infty$. We can currently prove this only for functions in $h^1$.

\section{Maximal functions on $H^p(\bbR^d)$ for $p<1$}\label{Hpsect}
We now consider the maximal  operator associated with the  generalized Riesz means 
when they act on  functions or distributions in the Hardy space $H^p(\bbR^d)$ for $p<1$. The following result   generalizes one  by Stein, Taibleson and Weiss \cite{stein-taibleson-weiss} for  the standard Bochner-Riesz means. Other  generalizations for specific rough $\rho$ were considered in  \cite{hong-kim-yang} and the references therein.

Let $R^\la_t$ be as in \eqref{rieszmeansRd}.

\begin{thm} \label{Hpthm}
For $0<p<1$, $\la(p)=d(1/p-1/2)-1/2$ we have for all $f\in H^p(\bbR^d)$
\[\big\|\sup_{t>0} |R^{\la(p)}_t\! f| \big\|_{L^{p,\infty}(\bbR^d)} \lc \|f\|_{H^p(\bbR^d)}.\]
\end{thm}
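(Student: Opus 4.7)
The plan is to follow the classical atomic-decomposition strategy of Stein, Taibleson, and Weiss \cite{stein-taibleson-weiss}, exploiting that for $p<1$ the index $\la(p)=d(\frac 1p-\frac 12)-\frac 12$ already exceeds the $H^1$ critical index $(d-1)/2$; this puts the kernel $K_{\la(p),t}$ of $R^{\la(p)}_t$ into a regime of good pointwise decay. Decomposing $f\in H^p(\R^d)$ atomically as $f=\sum_k c_k a_k$ with $\sum_k|c_k|^p\approx \|f\|_{H^p}^p$, each atom $a_k$ supported in a ball $B_k=B(x_k,r_k)$ with $\|a_k\|_2\le|B_k|^{1/2-1/p}$ and vanishing moments up to order $N\ge\lfloor d(\frac 1p-1)\rfloor$, the $L^{p,\infty}$-summation lemma of Stein and N.~Weiss \cite[Lemma 2.3]{stein-nweiss} reduces matters to the uniform bound $\|\sup_{t>0}|R^{\la(p)}_t a|\|_{L^{p,\infty}(\R^d)}\lc 1$ for atoms $a$, which after translation we take supported in $B=B(0,r)$. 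I would split this estimate according to the local region $2B$ and the far region $\R^d\setminus 2B$.

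On $2B$ I would invoke the $L^2$-boundedness of the maximal operator $f\mapsto\sup_{t>0}|R^{\la(p)}_t f|$. The kernel bound $|K_{\la(p),1}(x)|\lc(1+|x|)^{-(d+1)/2-\la(p)}=(1+|x|)^{-d/p}$ obtained by stationary phase furnishes an integrable radially-decreasing majorant with $L^1$-norm $O(1)$, whence $\sup_{t>0}|R^{\la(p)}_t f(x)|\lc M_{HL}f(x)$ pointwise. Tshebyshev's inequality combined with $|2B|\lc r^d$ and $\|a\|_2\lc r^{d/2-d/p}$ then yields
\[ \meas\bigl\{x\in 2B:\sup_{t>0}|R^{\la(p)}_t a(x)|>\alpha\bigr\}\lc\min\bigl(r^d,\ \alpha^{-2}r^{d-2d/p}\bigr)\lc\alpha^{-p} \]
for all $\alpha>0$, the two branches of the minimum crossing exactly at $\alpha=r^{-d/p}$.

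On the far region I would exploit the vanishing moments of $a$. Subtracting from $y\mapsto K_{\la(p),t}(x-y)$ its degree-$N$ Taylor polynomial at $y=0$ and using the derivative estimate $|\nabla^{N+1}K_{\la(p),t}(z)|\lc t^{d+N+1}(1+t|z|)^{-d/p}$ together with $\|a\|_1\lc r^{d-d/p}$ yields, for $|x|>2r$,
\[ |R^{\la(p)}_t a(x)|\lc r^{N+1+d-d/p}\,t^{d+N+1}(1+t|x|)^{-d/p}. \]
Since the naive supremum in $t$ of this bound need not be finite, I would decompose the $t$-range dyadically relative to $|x|^{-1}$: for $t|x|\lc 1$ the above directly gives the contribution $r^{N+1+d-d/p}|x|^{-(d+N+1)}$, integrable to the $p$-th power on $|x|>2r$ with uniform scaling, while for $t|x|\gc 1$ one combines the pointwise bound with the uniform $L^2$-bound $\|R^{\la(p)}_t a\|_2\lc\|a\|_2$ and a dyadic square-function estimate for $\sup_{t\sim 2^j}|R^{\la(p)}_t a|$. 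Summing the resulting contributions across dyadic scales, with $N$ chosen large enough that $N+1>d(\frac 1p-1)$, yields the required bound $\int_{|x|>2r}(\sup_{t>0}|R^{\la(p)}_t a(x)|)^p\,dx\lc 1$.

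The main obstacle is establishing the uniform derivative bounds $|\nabla^k K_{\la(p),t}(x)|\lc t^{d+k}(1+t|x|)^{-d/p}$ for an arbitrary smooth homogeneous distance function $\rho$, without curvature hypotheses on $\Sigma_\rho$. Stationary-phase analysis as in the proof of Proposition \ref{necRdprop} supplies these in directions normal to $\Sigma_\rho$ at points of nonvanishing Gaussian curvature, while integration by parts handles non-normal directions; where $\Sigma_\rho$ has flat portions the subordination formula \eqref{subord} of \S\ref{origincontr} expresses $(1-\rho)_+^{\la(p)}$ as a superposition of smoother multipliers $(1-\rho^N/s)_+^M$ for $M,N$ large, whose kernels and derivatives satisfy integrable bounds after weighted averaging against the density $s^M u_N^{(M+1)}(s)$. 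Careful dyadic bookkeeping in the far-region argument is additionally required to handle the endpoint case where $d+N+1\approx d/p$.
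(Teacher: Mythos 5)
There are two genuine gaps, one structural and one at the critical index. First, your entire argument rests on the pointwise bounds $|\nabla^k K_{\la(p),t}(x)|\lc t^{d+k}(1+t|x|)^{-d/p}$, which are a stationary-phase consequence of \emph{nonvanishing Gaussian curvature} of $\Sigma_\rho$ and are simply false for a general smooth homogeneous distance function (if $\Sigma_\rho$ has a flat or degenerate portion, the kernel decays far more slowly in the corresponding normal directions). Your proposed rescue via the subordination formula \eqref{subord} does not work: that formula is usable only for the smooth truncated symbol $u=\upsilon_0\cdot(1-\cdot)^\la$, because the weight $s^M u_N^{(M+1)}(s)$ is integrable only when $\upsilon_0$ has removed the singularity at $\rho=1$; applied to the singular piece $(1-\upsilon_0)(1-\rho)_+^\la$ the weight behaves like $(1-s)^{\la-M-1}$ and is non-integrable for $M>\la$, so one cannot write the critical multiplier as an integrable superposition of smooth ones. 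The paper avoids curvature altogether: it decomposes dyadically in the distance to $\Sigma_\rho$ ($T_j$, multiplier supported where $1-\rho\approx 2^{-j}$) and then into $O(2^{j(d-1)/2})$ angular sectors $T_{j,\nu}$ \`a la C\'ordoba, whose kernels obey the product-type bounds \eqref{eqn:ptbound} by integration by parts alone (Lemma \ref{intbypartsest}); the price of the sector count is paid with the $p$-triangle inequality for $p\le 1$, yielding $\|\cM_{j,\nu}a\|_p\lc 2^{j\frac{d+1}{2}(\frac1p-1)}$ and hence $\|\cM_j a\|_p\lc 2^{j\la(p)}$ (Proposition \ref{Mjestthm}).

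Second, even granting curvature (classical Bochner--Riesz), your far-field estimate does not close. Optimizing your two bounds (Taylor remainder for $tr\le 1$, direct size estimate for $tr\ge 1$) gives only $\sup_{t>0}|R^{\la(p)}_t a(x)|\lc |x|^{-d/p}$ for $|x|>2r$, and $\int_{|x|>2r}|x|^{-d}\,dx$ is logarithmically divergent, so the claimed strong bound $\|\sup_t|R_t^{\la(p)}a|\|_{L^p(\R^d\setminus 2B)}\lc 1$ is not obtainable this way -- this is precisely the critical-index obstruction. What one does get is an $L^{p,\infty}$ bound (the distribution function of $|x|^{-d/p}$ is fine), and assembling weak-type bounds over atoms requires the $p$-triangle inequality in $L^{p,\infty}$ for $p<1$ (Kalton, or \cite{stein-taibleson-weiss}; the Stein--N.~Weiss lemma \cite[Lemma 2.3]{stein-nweiss} you cite is the $L^{1,\infty}$ statement and is not what is needed here). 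In the paper the same obstruction appears as the non-summability of $\sum_j 2^{-j\la(p)}\|\cM_j a\|_p$, each term being $O(1)$, and is resolved by the Bourgain two-exponent interpolation of \S\ref{Hpsect}: one proves the strong $L^{p_i}$ bounds \eqref{Lpi} for $p_1<p<p_0<1$, splits the $j$-sum at $2^j\approx\alpha^{-p/d}$, and applies Tshebyshev at the two exponents separately. Your proposal contains no mechanism of this kind, so even in the curved case the argument as written does not yield the weak-type conclusion.
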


We use the same reductions as in \S\ref{mainwtsect}. Write, for $t>0$
$$
R^\la_t f(x) = \cF^{-1}[u(\rho(\cdot/t)) \widehat f](x) + \sum_{j= 1}^\infty 2^{-j\la} T_j f(x,t),
$$
where $u$ is as in \S\ref{origincontr} and 
$\widehat {T_j f}(\xi,t)= \vphi_j(\rho(\xi/t))\widehat f(\xi)$ 
with $\phi_j$ as in \S\ref{Littlewood-Paley}.
This is similar to \eqref{Tjkdef} (except that now $t$ ranges over $(0,\infty)$).
The functions $u$, $\varphi_j$ depend on $\la$ but satisfy uniform estimates as $\la$ is taken over a compact subset of $\bbR$.
Let
\[\cM_0 f(x)= 
\sup_{t>0}|\cF^{-1}[u(\rho(\cdot/t)) \widehat f](x)\]
and for $j\ge 1$, $$\cM_j f(x)= \sup_{t>0} |T_jf(x,t)|.$$
We then have 
\Be \label{ptwmax}\sup_{t>0} |\cR^{\la(p)}_t f(x)|\le \cM_0 f(x)+ \sum_{j\ge 1} 2^{-j\la(p)}  \cM_j f(x),
\Ee and we shall derive a weak type inequality on $H^p$ for the right hand side in \eqref{ptwmax}.
The ingredients are $H^p\to L^p$ bounds for the maximal operators 
$\cM_0$ and $\cM_{j}$. 

Let $M$ be a nonnegative integer.
We recall that a function $a$ supported on a ball $B$ is a $(p,M)$ atom if $\|a\|_\infty \le \text{vol}(B) ^{-1/p}$ and $\int a(x)P(x) dx=0$ for all polynomials of degree at most $M$. By the atomic decomposition it suffices to check the $H^p \to L^p$ bounds on $(p,M)$ atoms for every non-negative integer $M > d(p^{-1}-1)-1$. The bound for $\cM_0a$ is straightforward:

\begin{lemma} \label{originHardythm} Let $M+1> d(p^{-1}-1)$ and let $a$ be a $(p,M)$-atom.
 For $0<p\le 1$ we have 
$$
\|\cM_0a 
\|_p\lc 1.
$$
\end{lemma}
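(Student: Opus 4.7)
The plan is to follow the standard $H^p\to L^p$ argument on atoms, once the kernel of the convolution operator defining $\cM_0$ is shown to have good pointwise estimates. Writing $\cM_0 f(x) = \sup_{t>0}|K_t\ast f(x)|$ with $K_t(x)=t^dK(tx)$ and $K=\cF^{-1}[u\circ\rho]$, I would first verify that for every multi-index $\beta$ with $|\beta|\le M+1$ and every $N>0$,
\[
|\partial^\beta K(x)|\le C_{\beta,N}(1+|x|)^{-N}.
\]
This is the delicate step because $\rho$ is only continuous at the origin; however the subordination formula \eqref{subord} represents $u\circ\rho$ as an integral of smooth compactly supported multipliers of the form $(1-\rho^N(\cdot)/s)^M_+$, whose inverse Fourier transforms (and, after an extra integration by parts in $\xi$, their derivatives) decay rapidly as soon as the parameters $M,N$ are taken large. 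As a byproduct, $\cM_0$ is dominated pointwise by the Hardy-Littlewood maximal operator and is therefore bounded on $L^q(\R^d)$ for every $q>1$.

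Let $a$ be a $(p,M)$-atom supported in the ball $B=B(x_0,r)$, so $\|a\|_\infty\le |B|^{-1/p}$, $\|a\|_1\le |B|^{1-1/p}$, and the first $M$ moments of $a$ vanish. I would split $\|\cM_0 a\|_p^p$ into contributions from $2B$ and from $(2B)^c$. On the local piece, H\"older's inequality with any fixed $q>1$ combined with the $L^q$ boundedness of $\cM_0$ yields
\[
\int_{2B}|\cM_0 a|^p\,dx\lesssim |B|^{1-p/q}\|a\|_q^p\lesssim |B|^{1-p/q}|B|^{p/q-1}=1.
\]

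On the non-local piece I would exploit the vanishing moments of $a$ by replacing $K_t(x-y)$, for $y\in B$, with its order-$M$ Taylor polynomial in $y$ at $x_0$ inside the integral. Taylor's remainder together with the derivative estimate on $K$ gives, for $y\in B$ and $x\notin 2B$,
\[
|K_t(x-y)-P_{x,t,M}(y-x_0)|\lesssim r^{M+1}t^{M+1+d}(1+t|x-x_0|)^{-N},
\]
where one uses $|x-x_0-\theta(y-x_0)|\gtrsim |x-x_0|$ on the support of $a$. Maximizing in $t$ with $N>M+1+d$ produces the pointwise bound
\[
|\cM_0 a(x)|\lesssim r^{M+1}|B|^{1-1/p}|x-x_0|^{-(M+1+d)}.
\]
The hypothesis $M+1>d(1/p-1)$ is exactly what makes $p(M+1+d)>d$, so that the resulting integral over $(2B)^c$ converges and a direct scaling computation yields $\int_{(2B)^c}|\cM_0 a|^p\,dx\lesssim 1$. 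The main obstacle is really only the derivative decay of $K$ in the presence of the possibly non-smooth $\rho$ at the origin; after that subordination step, the rest is the familiar atomic argument.
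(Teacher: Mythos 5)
Your argument is correct, but it takes a different route from the paper's. The paper's proof is a two-line reduction: the subordination formula \eqref{subord} (applied at each dilation scale $t$, using that $(1-\rho(\xi/t)^{N_1}/s)_+^{N_2}$ is itself a dilate of $(1-\rho^{N_1})_+^{N_2}$) gives the pointwise domination
\[
\cM_0 a(x)\lc \fM_{N_1,N_2}a(x)\cdot \int_0^\infty s^{N_2}|u_{N_1}^{(N_2+1)}(s)|\,ds,
\qquad \fM_{N_1,N_2}f=\sup_{\tau>0}|\cF^{-1}[(1-\rho(\cdot/\tau)^{N_1})_+^{N_2}\widehat f\,]|,
\]
and then invokes the standard $H^p\to L^p$ bound for the smooth quasi-radial maximal operator $\fM_{N_1,N_2}$ with $N_1,N_2$ large. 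You instead push the subordination formula one level deeper, extracting pointwise decay of $K=\cF^{-1}[u\circ\rho]$ and of its derivatives up to order $M+1$, and then run the classical atomic estimate by hand (H\"older plus $L^q$-boundedness on $2B$; moment cancellation, Taylor remainder, and $\sup_t t^{M+1+d}(1+t|x-x_0|)^{-N}\lc|x-x_0|^{-(M+1+d)}$ off $2B$, with $M+1>d(1/p-1)$ exactly ensuring $p(M+1+d)>d$). Both routes rest on the same underlying kernel decay for the inverse Fourier transforms of $(1-\rho^{N_1})_+^{N_2}$ and its polynomial multiples; the paper's version is shorter because it outsources the atomic computation to the black-box bound for $\fM_{N_1,N_2}$, while yours is self-contained and makes the role of the hypothesis on $M$ explicit. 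One small imprecision worth fixing: derivatives $\partial_x^\beta K$ correspond to the multipliers $(2\pi i\xi)^\beta u(\rho(\xi))$, so their decay comes from integrating by parts against these polynomially modified (still compactly supported, equally regular) symbols, uniformly in the subordination parameter $s$ — not from "an extra integration by parts" applied to the underived kernels. With that clarified, the proof goes through.
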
 

\begin{proof} This follows by a variant of  the argument in \S\ref{origincontr}.
Define  $$\fM_{N_1,N_2}f=
\sup_{\tau>0} |\cF^{-1}[ (1-\rho(\cdot/\tau)^{N_1})^{N_2}_+\widehat f]|.$$ 
Let $N_1$, $N_2$ be large so that $\fM_{N_1,N_2}$ maps $H^p$ to $L^p$.
By the subordination formula \eqref{subord} we have
\Be
\sup_{t>0}|\cF^{-1}[u(\rho(\cdot/t))\widehat a](x)|
\lc |\fM_{N_1,N_2} a (x)| 
\frac{1}{N_2!}\int_0^\infty
s^{N_2} |u_{N_1}^{(N_2+1)} (s) |ds\Ee
and the integral is finite.
Hence we get the desired $L^p$ bound for $\cM_0a$.
\end{proof}

\subsection{\it The main $H^p\to L^p$ bound}

\begin{prop} \label{Mjestthm} Let  $0<p\le 1$,  $j\ge 1$, $\nu\in \cZ_j$.
Let $M+1> d(p^{-1}-1)$ and let $a$ be a $(p,M)$-atom. Then
\[ \big\|\cM_{j} a\big\|_p\lc 2^{j(d(\frac 1p-\frac 12)-\frac 12)}.
\]
\end{prop}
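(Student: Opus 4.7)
The plan is to reduce by scaling to an atom $a$ supported in the unit ball $B(0,1)$, and to split $\|\cM_j a\|_p^p = \int_{|x|\le 2^j}|\cM_j a|^p\,dx + \int_{|x|> 2^j}|\cM_j a|^p\,dx$, estimating the local piece via an $L^2$-maximal inequality combined with H\"older, and the far piece via Taylor expansion with the atomic moment cancellation.

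First, using the homogeneity $\rho(\lambda\xi) = \lambda^{1/b}\rho(\xi)$ and the induced rescaling of the $t$-parameter, I would assume $\|a\|_\infty\le 1$ and $\int y^\alpha a(y)\,dy = 0$ for $|\alpha|\le M$, with $M+1 > d(1/p-1)$; both sides of the claimed inequality are invariant under this rescaling. For the local piece, H\"older's inequality gives $\int_{|x|\le 2^j}|\cM_j a|^p\,dx \lesssim 2^{jd(1-p/2)}\|\cM_j a\|_2^p$, and the requisite $L^2$-maximal estimate $\|\cM_j a\|_2\lesssim 2^{-j/2}$ follows from the decomposition $\cM_j a(x)^2\le \sum_k\sup_{t'\in[1,2]}|T_j^k\cL_k a(x,t')|^2$ (valid since $\eta(2^{-k}\cdot) = 1$ on $\mathrm{supp}\,\varphi_j(\rho(\cdot/t))$ for $t\in[2^k,2^{k+1}]$), combined with the Sobolev-in-$t$ argument from Lemma \ref{L2lemma}, Littlewood-Paley orthogonality, and the atomic bound $\|\widehat a\|_\infty\lesssim 1$ paired with the thin-annulus support of the multiplier via Plancherel. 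The resulting local contribution is $\lesssim 2^{j(d - p(d+1)/2)} = 2^{j(d(1/p-1/2)-1/2)p}$, matching the target.

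For the far piece, Taylor expansion of $\kappa_{j,t}(z) := t^d K_j(tz)$ in $y$ about $0$, combined with $\int y^\alpha a(y)\,dy = 0$ for $|\alpha|\le M$, gives $|T_j a(x,t)|\lesssim\sup_{|\tilde y|\le 1}|\nabla^{M+1}\kappa_{j,t}(x-\tilde y)|$ for $|x|\ge 2$. Since $\nabla^{M+1}K_j$ inherits the anisotropic bound of Lemma \ref{intbypartsest} (the symbol $\xi^\alpha\varphi_j(\rho(\xi))$ is supported where $|\xi|\sim 1$), optimising over $t>0$ (the radial cutoff $(1+2^{-j}t|x|)^{-N}$ saturating at $t\sim 2^j/|x|$, modulated by the tangential factor $(1+2^{-j/2}t|P_{j,\nu}x|)^{-N}$ for caps $\nu$ not aligned with $x/|x|$) and summing over angular caps yields $\cM_j a(x)\lesssim 2^{j(d/2+M+1/2)}|x|^{-(d+M+1)}$. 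Integrating over $\{|x|>2^j\}$, using $(d+M+1)p>d$, gives a contribution of the same order $2^{j(d - p(d+1)/2)}$, matching the target.

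The main obstacle will be establishing the sharp $L^2$-maximal estimate with the $2^{-j/2}$ saving over the naive bound $\|\cM_j f\|_2\lesssim\|f\|_2$; this improvement must come from combining the Plancherel bound exploiting $\|\widehat a\|_\infty\lesssim 1$ (effective at small time-scales where the multiplier has thin-annulus support of measure $\sim 2^{-j}t^{d/b}$) with the standard operator bound at larger scales, through a careful $\ell^2$-aggregation across dyadic time intervals. Without this extra saving the local H\"older bound would overshoot the target by exactly $2^{jp/2}$, matching the deficit at the threshold index $M+1 = d(1/p-1)$.
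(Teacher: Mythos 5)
Your treatment of the far region $\{|x|\gc 2^j\}$ (Taylor expansion against the moment cancellation, together with the anisotropic bounds on $\nabla^{M+1}K_{j,\nu}$) is in outline the same as the paper's estimate over $\sD^\complement$, and is fine modulo the cap--summation details. The local piece, however, has a fatal gap: the $L^2$ maximal estimate $\|\cM_j a\|_2\lc 2^{-j/2}$ on which your H\"older step rests is false. Take $\rho(\xi)=|\xi|$, let $\chi$ be a bump supported in the unit ball with $\|\chi\|_\infty\le 1$, and set $a(y)=\chi(y)e^{2\pi i\langle \xi_1,y\rangle}$ with $|\xi_1|\approx 2^j$; its moments are $O(2^{-jN})$, so a negligible correction makes it a genuine $(p,M)$-atom. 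Choosing $t\approx 2^j$ so that $\xi_1$ sits in the middle of $\{\xi:\varphi_j(|\xi|/t)\approx 1\}$, that annulus has thickness $\approx 2^{-j}t\gc 1$ and hence contains essentially all of the Fourier support of $a$; thus $T_j a(\cdot,t)\approx a$ and $\|\cM_j a\|_2\ge \|T_ja(\cdot,t)\|_2\gc 1$. The thin-annulus/Plancherel gain you invoke is only available for $t\lc 2^{j/d}$; at the single scale $t\approx 2^j$ the operator acts essentially as the identity on such an atom, so no $\ell^2$-aggregation over dyadic time blocks can produce the factor $2^{-j/2}$ (and indeed the Sobolev-in-$t$ argument of Lemma \ref{L2lemma} with $q=\infty$ gives exactly $\|\cM_ja\|_2\lc\|a\|_2\lc 1$, which is sharp). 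With only this true bound, your local estimate yields $2^{jd(1-p/2)}$, which exceeds the target by precisely the factor $2^{jp/2}$ you flagged.

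The failure is structural rather than technical: a global $L^2$ bound combined with H\"older over the whole ball $\{|x|\le 2^j\}$ cannot record \emph{where} $\cM_j a$ lives. For the modulated atom above, $\cM_ja$ has size $\approx 1$ but is concentrated near the unit ball, so its $L^p$ integral is $O(1)$; for a low-frequency bump, $\cM_ja$ spreads over $\{|x|\lc 2^j\}$ but has size only $\approx 2^{-j}|x|^{-(d-1)/2}$ there. Both are consistent with the proposition, but the product (global $L^2$ norm)$\,\times\,$(measure of the big ball)$^{1/p-1/2}$ is not. This is why the paper argues cap by cap — writing $\cM_j\le\sum_{\nu\in\cZ_j}\cM_{j,\nu}$ and absorbing the count $\#\cZ_j=O(2^{j(d-1)/2})$ with the $p$-triangle inequality — and then proves \emph{pointwise} bounds on $\cM_{j,\nu}a$ over the anisotropic regions $\sD_0\subset\sD_1\subset\sD$ and $E$, $E^\complement$, which is exactly the spatially localized information your decomposition discards. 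To repair your argument you would have to replace the local H\"older step by pointwise (or at least spatially localized) estimates of this kind.
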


We further decompose 
$T_jf(x,t)=\sum_{\nu\in \cZ_j} T_{j,\nu} f(x,t)$ where we use the homogeneous partition of unity as in \eqref{Tjknudef}.
Let  for $\nu \in \cZ_j$,
\begin{align*}
\cM_{j,\nu}  f(x)&= \sup_{t>0} |T_{j,\nu} f(x,t)|.
\end{align*} Then
$\cM_j f(x)
\le \sum_{\nu\in \cZ_j} \cM_{j,\nu} f(x)$.
Since $\#\cZ_j=O(2^{j(d-1)/2})$ 
we can use the 
triangle inequality in $L^p$, $p\le 1$, to see that 
the proposition follows from
\Be\label{Mjnuest}
\big\|\cM_{j,\nu} a\big\|_p\lc 2^{j\frac{d+1}{2}(\frac 1p-1)} .
\Ee
We proceed with the proof of
\eqref{Mjnuest}.

By translation and scaling, we may assume that $a$ is supported in the ball $B$ of radius $1$ centered at the origin, $ \|a\|_\infty\le 1$ and $\int a(x)P(x) dx=0$ for all polynomials of degree $\le M$.
By a rotation we may also assume that $\nabla\rho (\xi_{j,\nu})$ is parallel to $(1,0,\dots,0)$ and 
thus writing  $x=(x_1,x')$ we have 
\begin{equation}\label{eqn:ptbound}
 |\partial^\alpha K_{j,\nu} (x)| \leq C_{N_1,N_2,\alpha} \frac{
 2^{-j(d+1)/2 }}{(1+2^{-j} |x_1|)^{N_1}(1+2^{-j/2} |x'|)^{N_2}}\,
\end{equation}
for all  multiindices $\alpha\in \bbN_0^d$ and  all $N_1, N_2\ge 0$; \cf.  \cite{christ-sogge-survey} or \cite{seeger-archiv}.
Let 
\[ \sD= \{ (x_1,x')\in \R^d : |x_1| \leq  5\cdot 2^j, |x'| \leq 5 \cdot 2^{j/2}\}. \]
In the following subsections, we estimates the $L^p$-quasi-norm of  $\cM_{j,\nu} a(x)$  over $\sD$ and $\sD^\complement$, respectively, using the cancellation condition for the atom when $x\in \sD^\complement$.

\subsubsection{Estimation over $\sD$}
Let
\begin{align*}
\sD_0& = \{ (x_1,x')\in \R^d : |x_1| \leq  5, |x'| \leq 5\} 
\\
\sD_1 &= \{ (x_1,x')\in \R^d : |x_1| \leq  5\cdot 2^{j/2}, |x'| \leq 5\}
\end{align*} and 
\Be\label{Edef} E= \{
 (x_1,x') \in \R^d : |x'|\geq 2^{-j/2} |x_1| \}.
\Ee

We derive  the following pointwise estimates
\Be\label{ptwmaxfct}
\cM_{j,\nu} a(x) \lc \begin{cases}
1  &\text{ if } x\in\sD_0,
\\
 2^{\epsilon j/2} |x_1|^{-(1+\epsilon)}  &\text{ if } x\in \sD_1\setminus \sD_0,
 \\
 2^{-j/2} |x'|^{-d}  &\text{ if } x \in (\sD\setminus \sD_1)\cap E,
 \\
 2^{j\frac{d-1}2} |x_1|^{-d} &\text{ if } x \in (\sD\setminus \sD_1)\cap E^\complement.
\end{cases} 
\Ee
If we use this for  $0<\eps<\frac 1p-1$ then straightforward integrations give the desired bound
\Be\label{LpDbd} \|\cM_{j,\nu} a\|_{L^p(\sD)}\lc  2^{j\frac{d+1}{2} (\frac 1p-1)}.\Ee

To verify \eqref{ptwmaxfct} first observe the pointwise bound $\cM_{j,\nu} a(x) 
\le \sup_{t>0} \|t^d K_{j,\nu} (t\cdot)\|_1 \|a\|_\infty \lc 1.$
This gives  \eqref{ptwmaxfct} for $x\in \sD_0$.
Secondly 
for any $x\in \sD_1 \setminus \sD_0$ and $y\in B_1(0)$, we have 
$|x_1- y_1| \gc|x_1|$.
Using \eqref{eqn:ptbound} with $N_1=1+\epsilon$ and $N_2=d-1-\epsilon$, we have
\begin{align*}
 |t^d K_{j,\nu} (t\cdot)*a (x) | &\les t^d 2^{-j(d+1)/2}  (2^{-j} t |x_1|)^{-(1+\epsilon)} \int_{|y'|\leq 1} (2^{-j/2}t|x'-y'|)^{-(d-1-\epsilon)} dy' \\
 & \les 2^{\epsilon j/2} |x_1|^{-(1+\epsilon)} 
\end{align*} 
for all $x\in \sD_1\setminus \sD_0$.



Assume that $x\in (\sD\setminus \sD_1)\cap  E$. Then $|x'|  \geq 5$ and thus $|x'-y'| \ge c|x'|$ for some $c>0$ for all $|y'|\leq 1$. Using \eqref{eqn:ptbound} with $N_1=0$ and $N_2=d$, we have
\begin{align*}
 |t^d K_{j,\nu} (t\cdot)*a (x) | \les t^d 2^{-j(d+1)/2} (2^{-j/2}t|x'|)^{-d} = 2^{-j/2} |x'|^{-d}.
\end{align*} 
Finally, when $x\in (\sD\setminus \sD_1)\cap E^\complement$, we have $|x_1-y_1| \geq c |x_1|$ and necessarily $|x_1| \ge 2^{-j/2}$. If we put $N_1=d$, $N_2=0$ in \eqref{eqn:ptbound} we get 
\begin{align*}
 |t^d K_{j,\nu} (t\cdot)*a (x) | \les t^d 2^{-j(d+1)/2} (2^{-j}t|x_1|)^{-d} = 2^{j(d-1)/2} |x_1|^{-d}.
\end{align*} 
This concludes the proof of the pointwise estimate \eqref{ptwmaxfct} which implies \eqref{LpDbd}.



\subsubsection{Estimation over $\sD^\complement$}
When $x\in \sD^\complement$ we use the cancellation of the atom and Taylor's formula to write
\begin{align*}
 t^d K_{j,\nu} (t\cdot) * a(x) &=  t^d\int \Big(K_{j,\nu} (tx-ty)-\sum_{n=1}^M \frac{\inn{-ty}{\nabla}^nK_{j,\nu}( tx)}{ n!} \Big)a(y)\, dy
 \\
 &=\frac{(-1)^{M+1} }{M!}  t^{d+M+1}\int_0^1(1-s)^M  \int \inn{y}{\nabla}^{M+1} K_{j,\nu} (tx-sty) a(y) dy\, ds.
 \end{align*}
We now use \eqref{eqn:ptbound}
 for the derivatives of order $M+1$.
Also notice that if $E$ is as in \eqref{Edef} we have 
$|x'|\ge 5\cdot 2^{j/2}$ for $x\in \sD^\complement\cap E$ 
and $|x_1|\ge 5\cdot 2^j $ for $x\in \sD^\complement\cap E^\complement$.
We obtain
\Be\notag
\cM_{j,\nu} a(x) \lc \begin{cases}
2^{jM/2} |x'|^{-d-1-M}  &\text{ if } x\in\sD^\complement\cap E,
\\
2^{j (M+\frac{d+1}{2})} |x_1|^{-d-M-1} &\text{ if } x\in \sD^\complement\cap E^\complement,
 \end{cases} 
 \Ee
 where for $x\in\sD^\complement\cap E$ we took $N_1=0$, $N_2=d+M+1$ in \eqref{eqn:ptbound}
 and for 
$x\in \sD^\complement\cap E^\complement$ we took $N_1=d+M+1$ and $N_2=0$.
Hence
\[
\|\cM_{j,\nu} a\|_{L^p(\sD^\complement\cap E)} \lc 2^{jM/2}\Big(\int\limits_{|x'|\gc2^{j/2}}|x'|^{-(d+1+M)p}\int\limits_{|x_1|\lc 2^{j/2}|x'|} dx_1 dx'\Big)^{1/p}
\]
and 
\[
\|\cM_{j,\nu} a\|_{L^p(\sD^\complement \cap E^\complement)} \lc 2^{j(M+\frac{d+1}{2})}
\Big(\int\limits_{|x_1|\gc2^{j}}|x_1|^{-(d+1+M)p}\int\limits_{|x'|\lc 2^{-j/2}|x_1|} dx' dx_1\Big)^{1/p}.
\]
Both integrals are   $\lc 2^{j\frac{d+1}{2}(\frac 1p-1)}$ provided $p>\frac{d}{d+1+M}$,
which is the hypothesis on $p$ and $M$. This concludes the proof of 
\eqref{Mjnuest}.
\qed


\subsection{\it Proof of Theorem \ref{Hpthm}, conclusion}
As a crucial ingredient we shall use the generalized triangle inequality for $L^{p,\infty}$,
namely
\Be\label{triangle}
\Big\|\sum_l f_l\Big\|_{L^{p,\infty}} \lc A_p\Big(\sum_l \|f_l\|_{L^{p,\infty}}^p\Big)^{1/p},
\Ee
which holds with $A_p=O( (1-p)^{-1/p}) $ for $0<p<1$.
See either the paper by  Kalton \cite{kalton} or the paper by   Stein-Taibleson-Weiss \cite{stein-taibleson-weiss}.  By Lemma \ref {originHardythm} it suffices to prove
\Be\label{jge1}
\Big\|  \sum_{j\ge 1} 2^{-j\la(p)} \cM_{j} f
\Big\|_{L^{p,\infty}}\lc \|f\|_{H^p},
\Ee 
and by 
\eqref{triangle} and the atomic decomposition we may assume that $f$ is a 
$(p,M)$-atom $a$,  with $M+1> d(p^{-1}-1)$.
By dilation and translation invariance we may assume that $a$ is  function supported  in 
$\{x:|x|\le 1\}$ such that $\|a\|_\infty\le 1$ and  $\int a(x)P(x) dx=0$ for all polynomials of degree $\le M$. Because of this normalization we notice that (up to a harmless constant)  the function $a$ is also a $(p_1,M)$ and a $(p_0,M)$ atom where 
  $p_1<p<p_0<1$ and we pick $p_1$  sufficiently close to $p$ such that $M+1>d(p_1^{-1}-1)$.  

We need to verify for all  $\alpha>0$
\Be\label{desired}
\meas\big(\big\{x: 
\sum_{j\ge 1} 2^{-j\la(p)}  \cM_{j} a 
>\alpha\big\}\big) \lc \alpha^{-p}.
\Ee
By Proposition \ref{Mjestthm} we have
for every $j\ge 1$
\Be\label{Lpi}
\Big\|
2^{-j\la(p)}  \cM_{j} a \Big\|_{p_i}
\lc 2^{j (\la(p_i)-\la(p)) } .
\Ee
We employ a variant of an interpolation argument in \cite{bourgain-restrwt} to estimate
\[\meas\big(\big\{x: 
\sum_{j\ge 1} 2^{-j\la(p)}  \cM_{j} a 
>\alpha\big\}\big) 
\le\, I+II  ,
\]
where
$I$ is the measure of the set on which 
$\sum_{\substack{2^j\le  \alpha^{-p/d}}} 2^{-j\la(p)} \cM_{j} a 
>\alpha/2$ and $II$ is 
the measure of the set on which 
$\sum_{\substack{2^j> \alpha^{-p/d}}} 2^{-j\la(p)} \cM_{j} a 
>\alpha/2$. By Tshebyshev's inequality 
\begin{align*}
I&\le (2/\alpha)^{p_1} \Big\|\sum_{\substack{j\in \bbN:\\ 2^j\le  \alpha^{-p/d}}} 
2^{-j\la(p)}  \cM_{j} a \Big\|_{p_1}^{p_1} ,\\
II&\le (2/\alpha)^{p_0} \Big\|\sum_{\substack{j\in \bbN:\\ 2^j> \alpha^{-p/d}}} 2^{-j\la(p)}  \cM_{j} a \Big\|_{p_0}^{p_0} .
\end{align*}
Apply \eqref{Lpi} to obtain 
\begin{align*}
I+II&\lc\alpha^{-p_1}
\sum_{\substack{2^j\le  \alpha^{-p/d}}} 2^{j(\la(p_1)-\la(p))p_1}
+
\alpha^{-p_0}
\sum_{\substack{ 2^j>   \alpha^{-p/d}}} 2^{j(\la(p_0)-\la(p))p_0}
\\&=
\alpha^{-p_1}
\sum_{\substack{2^{jd} \le  \alpha^{-p}}} 2^{jd(1-\frac {p_1}{p})}
+
\alpha^{-p_0}
\sum_{\substack{ 2^{jd} >   \alpha^{-p}}} 2^{jd (1-\frac{p_0}{p})} \,\lc\,\alpha^{-p}.
\end{align*}
This yields \eqref{desired} and concludes the proof. \qed

\noi{\it Remark. }  Versions of the Fan-Wu transference argument  in \S\ref{h1transference} for maximal functions and $h^p$ for $p<1$ can be used to prove a theorem for Riesz means of Fourier series  analogous to Theorem \ref{Hpthm}, i.e. the maximal function 
$\sup_{t>0} |\cR^{d(1/p-1/2)-1/2}_t f|$ defines an operator that maps  $h^p(\bbT^d)$ to  $L^{p,\infty}(\bbT^d)$ when $p<1$.

\section{Open problems} \label{sec:open}

\subsection{\it Spaces near $L^1$}
For  $f\in L^1(\bbT^d)$ it remains open whether the Riesz means $\cR^{\la(p)}_t\!f(x)$
 converge $q$-strongly a.e. for any $q<\infty$. In particular can one upgrade 
in Corollary \ref{almostalmost} the conclusion of upper density one of  $E(f,x)$ to density one?

It may  also be interesting to investigate strong convergence a.e. for spaces  intermediate between $L^1$ and $L\log L$.

\subsection{\it The case $q=p'$}
For $f\in L^p(\bbT^d)$, $1<p<2$, prove or disprove that    $\cR^{\la(p)}_t\!f(x)$  converges $q$-strongly a.e.  when $q=p'$.
For $f\in h^1(\bbT^d)$, is there a version  of Rodin's theorem \cite{rodin}   in one dimension, that applies to  Riesz means at the critical index 
$\la(1)=\frac{d-1}{2}$ where the $L^q$-average  norm in  $t$-variable  is replaced by a $BMO$-average?

\subsection{\it Problems involving nonisotropic dilations} One can ask the same questions for quasi-radial Riesz means 
when the isotropic dilation group is replaced by a nonisotropic dilation group $t^P$ where $P$ is a matrix with positive eigenvalues
and $\rho$ satisfies $\rho(t^P \xi)=t\rho(\xi)$.
It turns out that the results depend on   the geometry of the surface in relation to the  eigenvectors of $P$.
In the case 
that  $\Sigma_\rho=\{\xi:\rho(\xi)=1\}$ 
has  nonvanishing curvature everywhere one has almost everywhere convergence for $\la>\frac{d-1}{2}$, but there are other examples where a.e. convergence fails for $\la<d/2$,
see \cite{yk-as} for details. Even in the  case of nonvanishing curvature we have currently 
no endpoint results for strong convergence of $\cR^\la_tf $, for the critical $\la=\la(p)$  when the  dilations are nonisotropic.

\subsection{\it Almost everywhere convergence} For $1<p<2$ the problem of a.e. convergence, and the critical $q$ for strong summability for $\la>\la(p)$ is wide open. 
Optimal results for the maximal operators are currently known only for the subspace $L^p_\rad$ of radial $L^p$ functions, see  \cite{gs-max}.
For general $L^p$ functions results that improve on Stein's classical theorem  for a.e. convergence of Riesz means of index $>(d-1)(1/p-1/2)$ are currently only known   in two dimensions, see   Tao's paper \cite{tao-max}.

\bibliographystyle{amsalpha}


\end{document}